\theoremstyle{plain}
\newtheorem{lemma}{Lemma}
\newtheorem{proposition}{Proposition}
\theoremstyle{definition}
\newtheorem{example}{Example}
\newcommand{\ints}{\mathbb{Z}}
\newcommand{\reals}{\mathbb{R}}
\newcommand{\vect}[1]{\mbox{\boldmath$#1$}}
\DeclareMathOperator{\val}{val}
\DeclareMathOperator{\cl}{cl}
\DeclareMathOperator{\argmin}{arg\,min}
\begin{document}

\title{Scheduling Network Maintenance Jobs with Release Dates and Deadlines to Maximize Total Flow Over Time: Bounds and Solution Strategies}

\author{Natashia Boland \qquad Thomas Kalinowski \qquad Simranjit Kaur\\[.5ex]
{\small University of Newcastle, Australia}}


\maketitle

\begin{abstract}
We consider a problem that marries network flows and scheduling, motivated by the need to schedule maintenance activities in infrastructure networks, such as rail or general logistics networks. Network elements must undergo regular preventive maintenance, shutting down the arc for the duration of the activity. Careful coordination of these arc maintenance jobs can dramatically reduce the impact of such shutdown jobs on the flow carried by the network. Scheduling such jobs between given release dates and deadlines so as to maximize the total flow over time presents an intriguing case to study the role of time discretization. Here we prove that if the problem data is integer, and no flow can be stored at nodes, we can restrict attention to integer job start times. However if flow can be stored, fractional start times may be needed. This makes traditional strong integer programming scheduling models difficult to apply. Here we formulate an exact integer programming model for the continuous time problem, as well as integer programming models based on time discretization that can provide dual bounds, and that can - with minor modifications - also yield primal bounds. The resulting bounds are demonstrated to have small gaps on test instances, and offer a good trade-off for bound quality against computing time.  

\medskip
\noindent\textbf{Keywords.} network models; maintenance scheduling; mixed integer programming; time discretization; heuristics
\end{abstract}

\section{Introduction}\label{sec:intro}
We consider a problem of scheduling maintenance jobs on the arcs of a network, where each job has a
given processing time, release date and deadline. The network is used to transport flow from a
source node to a sink node using capacitated arcs, where the capacity of an arc represents the
highest rate in flow units per unit time that it can carry. While a maintenance job is in progress
on an arc, the arc cannot carry flow, and the objective is to schedule the maintenance jobs so as to
maximize the total flow that can be transported by the network over a given planning time horizon. 

This problem is a natural marriage of scheduling with network flow: it unites the diverse and well established field of scheduling (see any of the many excellent texts available on the topic, e.g.~\cite{bakertrietsch2009scheduling,brucker2007scheduling,bruckerknust2012scheduling,conway2003scheduling,leung2004scheduling,pinedo2008scheduling,robert2010scheduling}) with dynamic network flows, which have been the subject of intense study in recent years, e.g.~\cite{koch2011flowsovertime,kotnyek2003annotated,skutella2009introduction}. Yet the problem was only recently introduced, first appearing in~\cite{boland2012scheduling}, where meta-heuristic approaches were explored. It was motivated by a study of a bulk goods export supply chain~\cite{boland2012mixed,boland2011optimizing_hvcc}, in which maintenance jobs on sections of the rail network and equipment in the export terminals were scheduled so as to maximize the throughput of the system. Strong NP-hardness of the problem is established in~\cite{boland2012scheduling} using a reduction 
to a network with only a single transshipment node (node other than the source or sink). The complexity of a variety of special cases is investigated in~\cite{boland2013unit_time}
where it is shown (among other results) that even in the case that all jobs have unit processing times and do not have release dates or deadlines, and that all transshipment nodes have equal inbound and outbound capacity, the problem is still strongly NP-hard. 

The problem exhibits a rich structure, making it attractive for complexity analysis, approximation, combinatorial algorithms, integer programming, and heuristics. It represents a natural extension to existing network models, and admits many interesting variants. For example, Tawarmalani and Li~\cite{Tawa}, motivated by a problem in highway maintenance, consider a multicommodity flow variant, 
providing complexity results, combinatorial algorithms, and integer programming models. The only other work combining network flow and scheduling that we are aware of is that of Nurre et al.~\cite{nurre2012restoring}, who schedule arc restoration tasks in the wake of a major disruption so as to maximize weighted flow over time. Whilst the problem does involve scheduling over a (weighted) maximum flow objective, there are several key differences to the problem we study here. In the problem of~\cite{nurre2012restoring}, the arc is closed from the start of the planning horizon until its restoration task is completed, from which time it is always open. The restoration task for each arc must be scheduled for completion by a work group, leading to a parallel machine scheduling structure absent in our case. 

All work to date on the problem we study here has ignored the possibility of flow storage at nodes. The latter is an important feature of real applications: in the export supply chain studied in~\cite{boland2012mixed}, the port terminal has stockyards for holding stockpiled material, which can provide outbound flow from the system even while maintenance shuts down inbound flow. The optimal schedule in the case that storage is ignored is likely to be far from optimal in the presence of storage. (We illustrate this point using the example given in Section~\ref{subsec:example}.)
Furthermore, the presence of storage has a fundamental effect on the properties of the problem: when no storage is allowed, integer data implies an optimal solution with integer job start times, but with storage, non-integer job start times may be required for optimality.

This makes the problem with storage a fascinating setting in which to study the role of time
discretization in integer programming models. It motivates questions such as what potential start times are important to consider? Can we restrict attention to particular times, or must all times in the continuum be considered in order to prove optimality? What kinds of discretization and what models can guarantee valid bounds on the optimal solution? In addressing these questions, this paper can be viewed as contributing to a rapidly increasing body of work exploring exact models based on coarser time discretizations, for example big bucket models in lot-sizing~\cite{wolsey2006production} and discretizations based on job release dates and deadlines in machine scheduling~\cite{SadykovBaptiste}, as well as approximate integer programs, 
for example for traveling salesman problems with time windows~\cite{dash2012timeBB,WangRegan2009} and for solving continuous-time dynamic network flows~\cite{HashemiNasrabadi2012}.

The prior work on close variants of the problem we study here either treats the selection of job start times heuristically, or assumes integer start times. In order to tackle a real problem in which maintenance jobs must be timed to within 15-minute intervals over a planning horizon of a year, the integer programming model presented in~\cite{boland2012mixed} is formulated in terms of a sparse set of possible start times selected heuristically for each job. The problem variants tackled in~\cite{boland2013unit_time,boland2012scheduling}
(all without storage) simply state the problem as one in which jobs must start at integer times. Whilst this property of start times is intuitively reasonable in the case without storage, it has not yet been formally proved; the first such proof is one of the key contributions of this paper. These are summarized as follows.
\begin{enumerate}
\item We give the first mixed integer linear programming model that solves the continuous time problem (with or without storage) exactly.
\item We provide the first formal proof that when no storage is allowed, there exists an optimal
  solution in which all job start times expressed in the form $a+\varepsilon_1
  b_1+\varepsilon_2b_2+\cdots+\varepsilon_kb_k$ where $a$ is a release date or a deadline of some
  job, $\varepsilon_i\in\{1,-1\}$ for all $i\in\{1,\ldots,k\}$, $b_i$ is the processing time of some
  job for all $i\in\{1,\ldots,k\}$, and the number of terms $k$ is less than twice the total number of jobs. This yields the property that if the data is integer, an optimal solution with integer start times is assured.
\item We demonstrate that when storage {\em is} allowed, even with integer data, non-integer job start times may be needed in an optimal solution. We prove that in this case we can (without loss of generality) restrict attention to {\em rational} start times, and in particular show that there exist optimal start times that can be expressed as rational numbers with a denominator that is {\em independent} of the job parameters (processing times, release dates and deadlines).
\item As a consequence, the time indexed formulations so useful in machine scheduling (e.g.~\cite{savelsbergh2005TImodel}) cannot be applied to this problem directly while assuring optimality.  Noting that the exact formulation is difficult to solve in practice, we provide the first integer programming model to give valid {\em dual} bounds, (upper bounds), based on discretization of time. The model can employ {\em any} discretization. 
    This property is particularly attractive, as it permits models to be time-scale invariant. For example, using the release dates and deadlines of jobs to form the time discretization yields a model with a size that is invariant to the length of the time horizon, depending only on the number of jobs; a unit time discretization based model's size would increase with increasing time horizon as well as with the number of jobs.
 \item We observe that the same model employing a discretization that conforms to the job parameters
   (release dates and deadlines are included, as is any time point in the discretization plus or
   minus any job processing time that stays between the job's release date and deadline) and with
   the additional restriction that jobs start at the start of a time interval, also provides
   feasible solutions when solved as an integer program (IP). Furthermore, if the time discretization is sufficiently fine, this formulation yields optimal solutions.
 \item This allows us to compute primal (lower) bounds for test instances with integer data by solving an IP based on the unit time discretization. We compare this approach with several heuristics that ``repair'' solutions found in the process of solving the integer programs that yield dual bounds. Our computational tests demonstrate the strength of both lower and upper bounds; we discuss the trade-offs between solution time and quality, and show that for the best approaches, optimality gaps are typically very small.
\end{enumerate}

\section{Problem formulation}\label{sec:problem}
Let $N=(V,A,s,t)$ be a network with node set $V$, arc set $A$, source node $s\in V$, and sink node $t\in V$. There is a  set $W\subseteq V\setminus\{s,\,t\}$ of storage nodes, and the capacity vector $\vect u=(u_x)_{x\in A\cup W}$ collects the arc capacities and the storage capacities of the nodes in $W$. For a node $v$, let $\delta^{\text{out}}(v)$ and $\delta^{\text{in}}(v)$ denote the set of arcs starting at $v$ and ending in $v$, respectively. In addition, we are given a set $A_1 \subseteq A$ of arcs that need to be shut for a maintenance job to be done without any preemptions. For simplicity, assume that for each arc $a\in A_1$ there is exactly one maintenance job, which is specified by its processing time $p_a$, its release time $r_a$, and its deadline $d_a$.  We consider this network over a time horizon $T$. The problem is to schedule the jobs, hence a feasible solution is a vector $\vect{t^*}=(t^*_{a})_{a\in A_1 }$ of start times with $t^*_{a}\in[r_{a},d_{a}-p_{a}]$ for all $a\in A_1$. Let $X$ denote the set of all feasible solutions, i.e. $X=\prod_{a\in A_1}[r_{a},d_{a}-p_{a}]$.

A feasible solution $\vect{t^*}$ is evaluated as follows. Let $0=t_0<t_1<\cdots<t_n=T$ be the increasing sequence obtained by ordering the set
\[\left\{t^*_{a}\ :\ a\in A_1\right\}\cup\left\{t^*_{a}+p_{a}\ :\ a\in
  A_1\right\}\cup\left\{0,T\right\}.\]
Note that the same time point $t$ can occur multiple times in this union, since it is possible that
several jobs start and end at time $t$. During each of the time intervals $[t_{i-1},t_i)$, $i=1,\dots,n$, thus induced by $t^*$, the state
of the network is constant: no maintenance job either starts or ends within the interval. 

For $a\in A_1$, let $I_a$ denote the set of intervals in which arc $a$ is shut for maintenance, i.e., $I_a=\{i\in\{1,\ldots,n\}\ :\ t^*_{a}<t_i\leqslant t^*_{a}+p_{a}\}$.
The value of the solution $\vect{t^*}$, denoted by $\val(\vect{t^*})$, can be characterized as the
optimal value of a maximum flow problem in a time-expanded network. For $a\in A$ and $i\in\{1,\ldots,n\}$, let $x_{ai}$ be the
flow on arc $a$ in time interval $[t_{i-1},t_i)$. For $v\in W$ and $i\in\{1,\ldots,n\}$, let
$x_{vi}$ be the amount of flow that is stored in $v$ at time $t_i$. We impose the boundary
conditions that the storage nodes are empty in the beginning and in the end of the time horizon,
i.e., $x_{v0}=x_{vn}=0$ for all $v\in W$. Finally, $\val(\vect{t^*})$ is the optimal objective
value of the following problem. 
\begin{align}
\text{maximize
}\sum_{i=1}^n&\left(\sum_{a\in\delta^{\text{out}}(s)}x_{ai}-\sum_{a\in\delta^{\text{in}}(s)}x_{ai}\right) \label{eq:objective_orig}\\
\text{s.t.}\quad \sum_{a\in\delta^{\text{out}}(v)}x_{ai} &= \sum_{a\in\delta^{\text{in}}(v)}x_{ai} && i\in\{1,\ldots,n\},\ v\in V\setminus (W\cup\{s,t\}), \label{eq:flowcon_1_orig}\\
\sum_{a\in\delta^{\text{out}}(v)}x_{ai}+x_{vi} &= \sum_{a\in\delta^{\text{in}}(v)}x_{ai}+x_{v,i-1} && i\in\{1,\ldots,n\},\ v\in W, \label{eq:flowcon_2_orig}\\
x_{ai} &\leqslant (t_i-t_{i-1})u_a && a\in A\setminus A_1,\ i\in\{1,\ldots,n\},\label{eq:arc_cap_orig1}\\
x_{ai} &\leqslant (t_i-t_{i-1})u_a && a\in A_1,\ i\in\{1,\ldots,n\}\setminus I_a,\label{eq:arc_cap_orig}\\
x_{ai} &=0 && a\in A_1,\ i\in I_a, \label{eq:outages_orig}\\
x_{vi} &\leqslant u_v && v\in W, i\in\{0,1,\ldots,n\}\label{eq:node_cap_orig}\\
x_{v0} &= x_{vn} =0 && v\in W,\label{eq:boundary_orig}\\
x_{ai},\, x_{vi} &\geqslant 0 && a\in A,\ v\in W,\ i\in\{1,\ldots,n\}.\label{eq:domains_orig}
\end{align}
The objective function~\eqref{eq:objective_orig} is the total throughput, i.e., the sum of the flow values over all time
periods. Constraints~\eqref{eq:flowcon_1_orig} and~\eqref{eq:flowcon_2_orig} are flow conservation
constraints for non-storage nodes and storage nodes, respectively. The incoming flow of a storage
node $v\in W$ in time period $i$, i.e., in the time interval $[t_{i-1},t_i)$, is the sum of the flow
that arrives at node $v$ in time period $i$ and the flow that that is stored in node $v$ at time
$t_{i-1}$. Similarly, the outgoing flow of a storage node $v\in W$ in time period $i$ is the sum of
the flow that arrives at node $v$ in this period and the flow that that is stored in node $v$ at
time $t_{i}$. Constraints~\eqref{eq:arc_cap_orig1}, \eqref{eq:arc_cap_orig}
and~\eqref{eq:outages_orig} are arc capacity constraints, where~\eqref{eq:outages_orig} captures the
arc outages. Constraints~\eqref{eq:node_cap_orig} are storage capacity constraints, and
constraints~\eqref{eq:boundary_orig} capture the boundary conditions. Our optimization problem is to find a start time vector $\vect{t^*}$ to maximize the total throughput:
\begin{equation}\label{eq:master_problem}
\varphi^*=\max\{\val(\vect{t^*})\ :\ \vect{t^*}\in X\}.
\end{equation}
Note that we may assume that in each interval $[t_{i-1},t_i)$ the flow rates for all arcs are
constant: on arc $a$, we have a flow rate of $x_{ai}/(t_i-t_{i-1})$ units of flow per time unit.

\subsection{An illustrative example}\label{subsec:example}

Both to illustrate the problem, and to demonstrate the importance of storage in finding an optimal maintenance schedule, we consider the  network and arc maintenance jobs given in Figure~\ref{fig:Example_1}, where arc labels indicate arc names and capacities (in parentheses). In this example, the time horizon is $T=3$, and only the job on arc $a$ needs to be scheduled: the job on arc $b$ must start at time $t^*_b=0$. Let $\hat{t}$ denote the start time of the job on arc $a$. From job $a$'s parameters, we see that $\hat{t}\in [0,1]$. As a consequence, there can only be {\em one} sequence of times $0 = t_0 \leqslant t_1 \leqslant t_2 \leqslant \dots \leqslant t_n = T$ resulting from ordering $\{\hat{t},t^*_b\}\cup\{\hat{t}+p_a,t^*_b+p_b\}\cup\{0,T\} = \{0,\hat{t},1,\hat{t}+2,3\}$, (where we have relaxed the strict inequalities to permit the job on $a$ to start at $t_0=0$ or at $t^*_b + p_b = 1$, the end time of the job on $b$, or to end at time $T=3$), and that is given by
\[ 0 = t_0 \leqslant t_1 = \hat{t} \leqslant 1 = t_2 < t_3 = \hat{t} + 2 \leqslant t_4= T = 3.\]
\begin{figure}[htb]
\begin{minipage}[b]{.6\textwidth}
  \begin{center}
 \begin{tikzpicture}[->,>=stealth',shorten >=1pt,auto,node distance=3cm,
   thick,main node/.style={circle,draw,font=\normalfont}]

   \node[main node] (1) {$s$};
   \node[main node] (2) [right of=1] {$v$};
   \node[main node] (3) [ right of=2] {$t$};

   \path[every node/.style={font=\normalfont}]
     (1) edge node{$a\ (2)$} (2)
     (2) edge node {$b\ (1)$} (3);
 \end{tikzpicture}
  \caption*{Network}
  \end{center}
\end{minipage}\hfill
\begin{minipage}[b]{.36\textwidth}
\setlength{\tabcolsep}{10pt}
\centering
\begin{tabular}{@{}ccccc@{}} \toprule
	arc & $r$ & $d$ & $p$ \\ \midrule
    $a$ & $0$ & $3$ & $2$ \\
    $b$ & $0$ & $1$ & $1$ \\
    \bottomrule
  \end{tabular}
\caption*{Job parameters}
\end{minipage}
\caption{Example to show the effect of storage capacity on the optimal maintenance schedule.}\label{fig:Example_1}
\end{figure}
The four time intervals thus induced have duration $\hat{t}$,$1-\hat{t}$, $\hat{t}+1$, $1-\hat{t}$,
respectively, and the corresponding flow networks for no storage and for storage at $v$ with
capacity 2 are illustrated in Figure~\ref{fig:example_networks}. 
\begin{figure}[htb]
\begin{minipage}[b]{.45\textwidth}
  \begin{center}
 \begin{tikzpicture}[->,>=stealth',shorten >=1pt,auto,node distance=2.5cm,
   thick,main node/.style={circle,draw,font=\normalfont}]

   \node[main node] (1) {$s$};
   \node[main node] (2) [right of=1] {$v$};
   \node[main node] (3) [right of=2] {$t$};
   \node (1a) [left of=1,node distance =.5cm,anchor=east] {$[0,\,\hat t)$};
   
   \node[main node] (4) [below of = 1,node distance = 1.5cm] {$s$};
   \node[main node] (5) [below of = 2,node distance = 1.5cm] {$v$};
   \node[main node] (6) [below of = 3,node distance = 1.5cm] {$t$};
   \node (4a) [left of=4,node distance =.5cm,anchor=east] {$[\hat t,\,1)$};
   
   \node[main node] (7) [below of = 4,node distance = 1.5cm] {$s$};
   \node[main node] (8) [below of = 5,node distance = 1.5cm] {$v$};
   \node[main node] (9) [below of = 6,node distance = 1.5cm] {$t$};
   \node (7a) [left of=7,node distance =.5cm,anchor=east] {$[1,\,\hat t+2)$};

   \node[main node] (10) [below of = 7,node distance = 1.5cm] {$s$};
   \node[main node] (11) [below of = 8,node distance = 1.5cm] {$v$};
   \node[main node] (12) [below of = 9,node distance = 1.5cm] {$t$};
   \node (10a) [left of=10,node distance =.5cm,anchor=east] {$[\hat t+2,\,3)$};

   \path[every node/.style={font=\normalfont}]
     (1) edge node{$2\hat t$} (2)
     (2) edge node {$0$} (3);
   \path[every node/.style={font=\normalfont}]
     (4) edge node{$0$} (5)
     (5) edge node {$0$} (6);
   \path[every node/.style={font=\normalfont}]
     (7) edge node{$0$} (8)
     (8) edge node {$\hat t+1$} (9);
   \path[every node/.style={font=\normalfont}]
     (10) edge node{$2-2\hat t$} (11)
     (11) edge node {$1-\hat t$} (12);
 \end{tikzpicture}
  \end{center}
\end{minipage}\hfill
\begin{minipage}[b]{.45\textwidth}
  \begin{center}
 \begin{tikzpicture}[->,>=stealth',shorten >=1pt,auto,node distance=2.5cm,
   thick,main node/.style={circle,draw,font=\normalfont}]

   \node[main node] (1) {$s$};
   \node[main node] (2) [right of=1] {$v$};
   \node[main node] (3) [right of=2] {$t$};
   \node (1a) [left of=1,node distance =.5cm,anchor=east] {$[0,\,\hat t)$};
   
   \node[main node] (4) [below of = 1,node distance = 1.5cm] {$s$};
   \node[main node] (5) [below of = 2,node distance = 1.5cm] {$v$};
   \node[main node] (6) [below of = 3,node distance = 1.5cm] {$t$};
   \node (4a) [left of=4,node distance =.5cm,anchor=east] {$[\hat t,\,1)$};
   
   \node[main node] (7) [below of = 4,node distance = 1.5cm] {$s$};
   \node[main node] (8) [below of = 5,node distance = 1.5cm] {$v$};
   \node[main node] (9) [below of = 6,node distance = 1.5cm] {$t$};
   \node (7a) [left of=7,node distance =.5cm,anchor=east] {$[1,\,\hat t+2)$};

   \node[main node] (10) [below of = 7,node distance = 1.5cm] {$s$};
   \node[main node] (11) [below of = 8,node distance = 1.5cm] {$v$};
   \node[main node] (12) [below of = 9,node distance = 1.5cm] {$t$};
   \node (10a) [left of=10,node distance =.5cm,anchor=east] {$[\hat t+2,\,3)$};

   \path[every node/.style={font=\normalfont}]
     (1) edge node{$2\hat t$} (2)
     (2) edge node {$0$} (3);
   \path[every node/.style={font=\normalfont}]
     (4) edge node{$0$} (5)
     (5) edge node {$0$} (6);
   \path[every node/.style={font=\normalfont}]
     (7) edge node{$0$} (8)
     (8) edge node {$\hat t+1$} (9);
   \path[every node/.style={font=\normalfont}]
     (10) edge node{$2-2\hat t$} (11)
     (11) edge node {$1-\hat t$} (12);
   \path[every node/.style={font=\normalfont}]
     (2) edge node{$2$} (5)
     (5) edge node {$2$} (8)
     (8) edge node {$2$} (11);
 \end{tikzpicture}
  \end{center}
\end{minipage}
\caption{Time expanded networks without storage (left) and with storage (right).  The capacity of each arc during the time interval indicated at the left of the network is written above the arc. }\label{fig:example_networks}   
\end{figure}
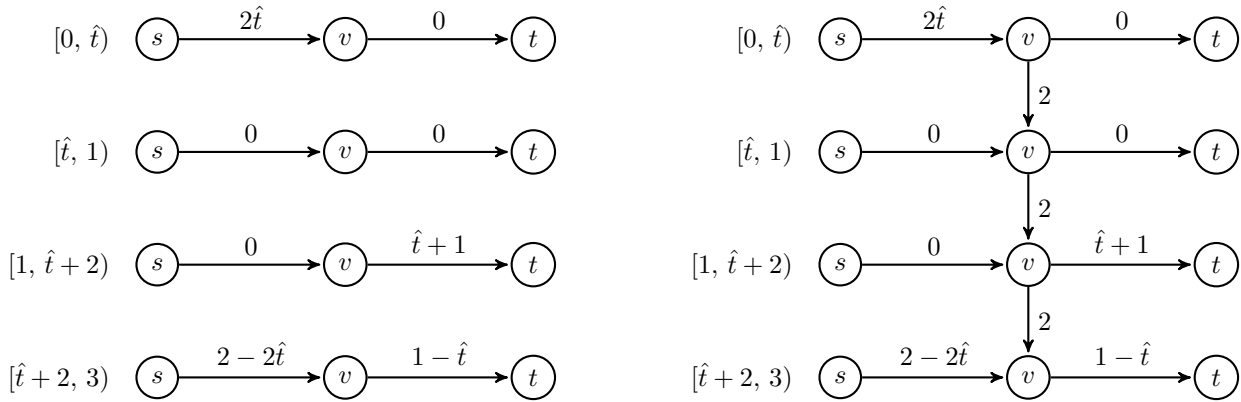
For the no-storage case the maximum throughput is
\[0+0+0+\min\{2-2\hat{t},\,1-\hat{t}\} = 1-\hat{t}.\]
This is maximized for $\hat{t}=0$, i.e., the optimal schedule is to take $t^*_a=t^*_b= 0$, which
gives a total throughput of $1$. Note that setting $\hat{t}=1$ gives a solution with zero throughput.
For the case with storage, the total capacity of arc $b$ gives an upper bound of $(\hat t+1)+(1-\hat
t)=2$ for the throughput, and this bound can be achieved only if arc $b$ is at capacity in each time
period. Therefore it is necessary that $2\hat t\geqslant\hat t+1$, i.e., $\hat t\geqslant
1$. This is also sufficient, since for $\hat t=1$ we get a feasible solution of value $2$ which is
shown on the left in Figure~\ref{fig:flow}. The situation is precisely the reverse of the case without storage: for $\hat{t} = 0$ we obtain
the smallest possible objective value, namely $1$ (see the right hand side solution in Figure~\ref{fig:flow}).
\begin{figure}[htb]
\begin{minipage}[b]{.45\textwidth}
  \centering
  \begin{tikzpicture}[->,>=stealth',shorten >=1pt,auto,node distance=2.5cm,
   thick,main node/.style={circle,draw,font=\normalfont}]

   \node[main node] (1) {$s$};
   \node[main node] (2) [right of=1] {$v$};
   \node[main node] (3) [right of=2] {$t$};
   \node (1a) [left of=1,node distance =.5cm,anchor=east] {$[0,\,1)$};
   
   \node[main node] (4) [below of = 1,node distance = 1.5cm] {$s$};
   \node[main node] (5) [below of = 2,node distance = 1.5cm] {$v$};
   \node[main node] (6) [below of = 3,node distance = 1.5cm] {$t$};
   \node (4a) [left of=4,node distance =.5cm,anchor=east] {$[1,\,1)$};
   
   \node[main node] (7) [below of = 4,node distance = 1.5cm] {$s$};
   \node[main node] (8) [below of = 5,node distance = 1.5cm] {$v$};
   \node[main node] (9) [below of = 6,node distance = 1.5cm] {$t$};
   \node (7a) [left of=7,node distance =.5cm,anchor=east] {$[1,\,3)$};

   \node[main node] (10) [below of = 7,node distance = 1.5cm] {$s$};
   \node[main node] (11) [below of = 8,node distance = 1.5cm] {$v$};
   \node[main node] (12) [below of = 9,node distance = 1.5cm] {$t$};
   \node (10a) [left of=10,node distance =.5cm,anchor=east] {$[3,\,3)$};

   \path[every node/.style={font=\normalfont}]
     (1) edge node{$2\ (2)$} (2)
     (2) edge node {$0\ (0)$} (3);
   \path[every node/.style={font=\normalfont}]
     (4) edge node{$0\ (0)$} (5)
     (5) edge node {$0\ (0)$} (6);
   \path[every node/.style={font=\normalfont}]
     (7) edge node{$0\ (0)$} (8)
     (8) edge node {$2\ (2)$} (9);
   \path[every node/.style={font=\normalfont}]
     (10) edge node{$0\ (0)$} (11)
     (11) edge node {$0\ (0)$} (12);
   \path[every node/.style={font=\normalfont}]
     (2) edge node{$2\ (2)$} (5)
     (5) edge node {$2\ (2)$} (8)
     (8) edge node {$0\ (2)$} (11);
 \end{tikzpicture}
\end{minipage}\hfill
\begin{minipage}[b]{.45\textwidth}
  \centering
  \begin{tikzpicture}[->,>=stealth',shorten >=1pt,auto,node distance=2.5cm,
   thick,main node/.style={circle,draw,font=\normalfont}]

   \node[main node] (1) {$s$};
   \node[main node] (2) [right of=1] {$v$};
   \node[main node] (3) [right of=2] {$t$};
   \node (1a) [left of=1,node distance =.5cm,anchor=east] {$[0,\,0)$};
   
   \node[main node] (4) [below of = 1,node distance = 1.5cm] {$s$};
   \node[main node] (5) [below of = 2,node distance = 1.5cm] {$v$};
   \node[main node] (6) [below of = 3,node distance = 1.5cm] {$t$};
   \node (4a) [left of=4,node distance =.5cm,anchor=east] {$[0,\,1)$};
   
   \node[main node] (7) [below of = 4,node distance = 1.5cm] {$s$};
   \node[main node] (8) [below of = 5,node distance = 1.5cm] {$v$};
   \node[main node] (9) [below of = 6,node distance = 1.5cm] {$t$};
   \node (7a) [left of=7,node distance =.5cm,anchor=east] {$[1,\,2)$};

   \node[main node] (10) [below of = 7,node distance = 1.5cm] {$s$};
   \node[main node] (11) [below of = 8,node distance = 1.5cm] {$v$};
   \node[main node] (12) [below of = 9,node distance = 1.5cm] {$t$};
   \node (10a) [left of=10,node distance =.5cm,anchor=east] {$[2,\,3)$};

   \path[every node/.style={font=\normalfont}]
     (1) edge node{$0\ (0)$} (2)
     (2) edge node {$0\ (0)$} (3);
   \path[every node/.style={font=\normalfont}]
     (4) edge node{$0\ (0)$} (5)
     (5) edge node {$0\ (0)$} (6);
   \path[every node/.style={font=\normalfont}]
     (7) edge node{$0\ (0)$} (8)
     (8) edge node {$0\ (1)$} (9);
   \path[every node/.style={font=\normalfont}]
     (10) edge node{$1\ (1)$} (11)
     (11) edge node {$1\ (1)$} (12);
   \path[every node/.style={font=\normalfont}]
     (2) edge node{$0\ (2)$} (5)
     (5) edge node {$0\ (2)$} (8)
     (8) edge node {$0\ (2)$} (11);
 \end{tikzpicture}
\end{minipage}
\caption{Optimal flows for $\hat t=1$ (left) and $\hat t=0$ (right) in the case with storage. The arcs are labeled with flow values
  and, in parentheses, capacities, during the time interval indicated at the left of the network.}
  \label{fig:flow}
\end{figure}
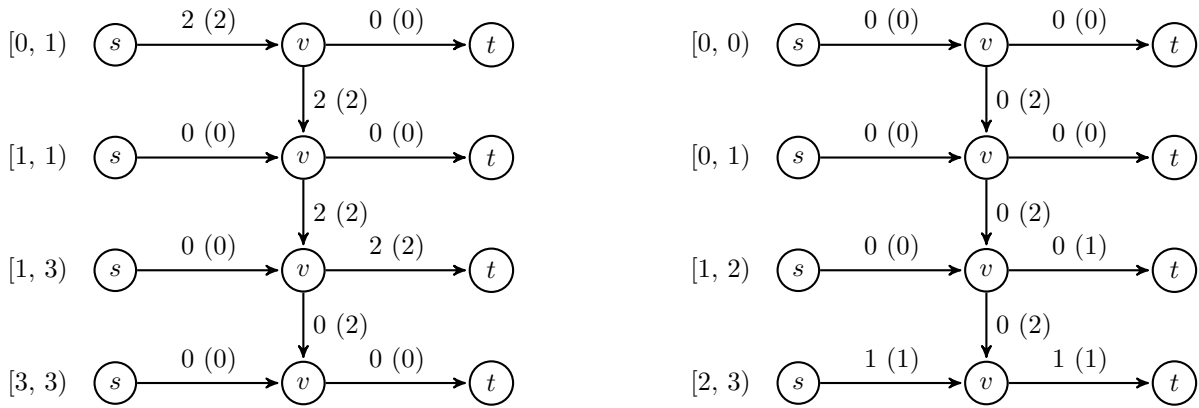

This example shows that ignoring storage can in some sense be as bad as can be: the optimal schedule without storage is exactly the schedule that minimizes flow if storage is allowed, and vice versa.

\subsection{A mixed integer linear programming formulation}\label{subsec:nonlinmodel}
We first model the problem as a nonlinear mixed integer program and then linearise the nonlinear
constraints with the help of additional variables. Since we need at most two time points for each
job to indicate its start and end time, as well as the time horizon start and end times, we use
variables $t_0, t_1,\dots,t_{M-1}, t_M$ where $M=2\lvert A_1\rvert+1$. Clearly we require
\begin{equation}
\label{eq:sorttimes}
  0 = t_0 \leqslant t_1 \leqslant \dots \leqslant t_{M-1} \leqslant t_M = T.
\end{equation}
We introduce binary variable $w_{ai} = 1$ if arc $a\in A_1$ is undergoing maintenance (i.e. is {\em not} available) in time interval $[t_{i-1},t_i)$ and zero otherwise. For convenience in what follows, we include $w_{a0}=0$ for each $a\in A_1$.

If $w_{ai}=1$ then the start time of interval $i$ must be at or after the release date of the job on
$a$. The
implication $w_{ai}=1\implies t_{i-1}\geqslant r_a$ can be modelled linearly with
\begin{equation}
\label{eq:reldate}
   t_{i-1} \geqslant r_{a} w_{ai}, \qquad\text{for all } \ i\in \{1,\ldots,M\},\ a \in A_1.
\end{equation}
Similarly, if $w_{ai}=1$ then the end time of interval $i$ must be at or before the due date of the
job on $a$, i.e., we want to model the implication $w_{ai}=1\implies t_{i}\leqslant d_a$, which can
be done by the linear constraint
\begin{equation}
\label{eq:duedate}
   t_{i} \leqslant d_{a} + (T- d_{a}) (1-w_{ai}), \qquad \forall\ i\in  \{1,\ldots,M\},\ a \in A_1.
\end{equation}
This constraint becomes $t_i\leqslant d_a$ when $w_{ai}=1$, and $t_i\leqslant T$ when $w_{ai}=0$. 

To ensure that the arc is shut precisely for the requisite duration, we constrain the total duration of the time intervals in which the arc is undergoing maintenance to equal the maintenance job processing time. This can be enforced by the nonlinear constraint
\begin{equation}
\label{eq:shuttimeNL}
\sum_{i=1}^M (t_i-t_{i-1}) w_{ai} = p_{a}, \qquad \forall\ a \in A_1.
\end{equation}

To enforce job processing without preemption, we require that each of the vectors $(w_{ai})_{i=0,1,\ldots,M}$ has the consecutive ones property: it consists of a sequence of 0's, then a sequence of 1's, then a sequence of 0's. To enforce this property, we introduce another binary variable: $z_{ai}=1$ if the processing of the job on arc $a \in A_1$ begins at the beginning of interval $i$. The following two constraints ensure that the maintenance job on an arc begins exactly once in the time horizon and once the arc is shut for maintenance then it remains so for consecutive time intervals:
\begin{equation}
\label{eq:consecprop3}
     \sum_{i=1}^{M} z_{ai} = 1 , \qquad  a \in A_1,
\end{equation}
and
\begin{equation}
\label{eq:consecprop1}
   z_{ai} \geqslant w_{ai} - w_{a(i-1)} , \qquad \forall \ i\in  \{1,\ldots,M\},\ a \in A_1.
\end{equation}

To ensure flow cannot pass through the arc while it is shut, we use another nonlinear constraint:
\begin{equation}
\label{eq:flowcapNL}
  x_{ai} \leqslant (t_i-t_{i-1}) (1-w_{ai})u_a, \qquad \forall\ i\in  \{1,\ldots,M\},\ a \in A_1.
\end{equation}

The resulting formulation, which we refer to as the {\em Continuous Time Nonlinear Integer Program}, denoted by CTIP-NL, is given by
\begin{align}
\text{maximize }&\sum_{i=1}^M\left(\sum_{a\in\delta^{\text{out}}(s)}x_{ai}-\sum_{a\in\delta^{\text{in}}(s)}x_{ai}\right)\nonumber\\
\text{s.t.}\quad \sum_{a\in\delta^{\text{out}}(v)}x_{ai} &= \sum_{a\in\delta^{\text{in}}(v)}x_{ai} && i\in \{1,\ldots,M\},\ v\in V\setminus (W\cup\{s,t\}), \nonumber\\
 \sum_{a\in\delta^{\text{out}}(v)}x_{ai}+x_{vi} &= \sum_{a\in\delta^{\text{in}}(v)}x_{ai}+x_{v,i-1} && i\in \{1,\ldots,M\},\ v\in W, \nonumber\\
 x_{ai} &\leqslant (t_i-t_{i-1})u_a && i\in  \{1,\ldots,M\}, a \in A\setminus A_1, \label{eq:flow-int-ub}\\
0 \leqslant x_{vi} &\leqslant u_v && i\in\{0,1,\ldots,M\},\ v\in W, \nonumber\\
x_{v0} &= x_{vM} =0 && v\in W,\nonumber\\
w_{a0} &= 0 && a\in A_1,\nonumber\\
& (\ref{eq:sorttimes}),  (\ref{eq:reldate}), (\ref{eq:duedate}),(\ref{eq:consecprop3}), (\ref{eq:consecprop1}),  (\ref{eq:shuttimeNL}), (\ref{eq:flowcapNL}), && \nonumber\\
x_{ai} &\geqslant 0 && i\in \{1,\ldots,M\},\  a\in A,\nonumber\\
w_{ai} & \in \{0,1\} &&  i\in  \{0,1,\ldots,M\},\ a \in A_1, \ \mbox{and}\nonumber\\
z_{ai} & \in \{0,1\} &&  i\in  \{1,\ldots,M\},\ a \in A_1. \nonumber
\end{align}

The nonlinear constraints in the above formulation can readily be modelled linearly with the use of additional variables. We define $\Delta_{ai}$ and $\bar{\Delta}_{ai}$ for each $a\in A_1$ and $i\in \{1,\ldots,M\}$ by
\begin{align*}
\Delta_{ai} &= \begin{cases} t_i-t_{i-1}, & \text{if } w_{ai} = 1 \\0, & \text{otherwise} \end{cases}\text{ and} & 
\bar{\Delta}_{ai} &= \begin{cases} t_i-t_{i-1}, & \text{if } w_{ai} = 0 \\0, & \text{otherwise.} \end{cases} 
\end{align*}
This can be modelled linearly via the constraints
\begin{equation}
\label{eq:delsum}
   \Delta_{ai} + \bar{\Delta}_{ai} = t_i-t_{i-1}, \qquad \forall\ a\in A_1,\ i\in \{1,\ldots,M\},
\end{equation}
together with
\begin{equation}
\label{eq:dellogic1}
   \Delta_{ai} \leqslant p_{a} w_{ai},  \qquad \forall\ a\in A_1,\ i\in \{1,\ldots,M\},
\end{equation}
and
\begin{equation}
\label{eq:dellogic2}
\bar{\Delta}_{ai} \leqslant  (T- p_{a})(1-w_{ai}) , \qquad \forall\ a\in A_1,\ i\in \{1,\ldots,M\}.
\end{equation}
 Then (\ref{eq:shuttimeNL}) and (\ref{eq:flowcapNL}) can be modelled linearly with
\begin{equation}
\label{eq:shuttime}
  \sum_{i=1}^M {\Delta}_{ai} = p_{a}, \qquad \forall a \in A_1,
\end{equation}
and
\begin{equation}
\label{eq:flowcap}
  x_{ai} \leqslant \bar{\Delta}_{ai}u_a, \qquad \forall i\in \{1,\ldots,M\},\ a \in A_1.
\end{equation}

We refer to the mixed integer linear programming formulation obtained by adding constraints (\ref{eq:delsum}), (\ref{eq:dellogic1}), and (\ref{eq:dellogic2}) to CTIP-NL and replacing the nonlinear constraints  (\ref{eq:shuttimeNL}) and (\ref{eq:flowcapNL})  with (\ref{eq:shuttime}) and (\ref{eq:flowcap}) respectively in CTIP-NL as the {\em Continuous Time Integer Program}, denoted by CTIP.

\section{Properties of optimal solutions}\label{sec:properties}
In this section we derive properties of optimal solutions for both the cases: a) when there are no storage nodes, i.e. $W = \varnothing$, and b) when there are storage nodes, i.e, $W \neq \varnothing$.

For the case when there are no storage nodes, i.e., $W=\varnothing$, we will prove in Lemma~\ref{lem:finiteness} that, without loss of
  generality, we may assume finitely many possible start times for each job. For each arc $a\in A_1$ we will
  construct a finite subset $S(a)\subseteq[r_a,\,d_a-p_a]$, and then prove that there is always an
  optimal solution $\vect{t^*}$ such that $\vect{t^*_a}\in S(a)$ for every $a\in A_1$. Clearly we
  should consider to start the job on arc $a$ as early as possible or as late as possible, so the
  set $S_0(a)=\left\{r_{a},d_{a}-p_{a}\right\}$  should be contained in $S(a)$ for every $a \in
  A_1$. Now suppose that the job on arc $a'$ starts at time $t$. If $t<r_a+p_a$ and $t+p_{a'}<d_a-p_a$, then we
  might consider to start the job on arc $a$ at time $t+p_{a'}$ in order to start the job on arc $a$
  as early as possible while avoiding overlap of the two jobs. By similar reasoning it can be reasonable to start the job
  on arc $a$ at time $t$, at time $t-p_a$ or at time $t+p_{a'}-p_a$ if these are in $[r_a,\,d_a-p_a]$. So we get
  a new candidate start time set $S_1(a)$ for arc $a$ by adding to $S_0(a)$ all times in $[r_a,\,d_a-p_a]$ that can be written in the
  form $t$, $t+p_{a'}$, $t-p_a$, or $t+p_{a'}-p_a$ for some $t\in S_0(a')$. Assuming that we have
  already defined candidate start time sets $S_k(a)$ for some nonnegative integer $k$ and all $a\in
  A_1$, we can extend these sets to sets $S_{k+1}(a)$ in the same way. More formally, for a set $X\subseteq\reals$ and a real number $\lambda$ we write $X+s$ for the set
$\{x+\lambda\ :\ x\in X\}$, and we define recursively,
\begin{multline}\label{eq:start_times_recursion}
S_{k+1}(a)=S_k(a)\cup\bigcup\limits_{a'\in A_1\setminus\{a\}}\Big(S_k(a')\cup\left(S_k(a')+p_{a'}\right)\cup\left(S_k(a')-p_{a}\right)\\
\cup\left(S_k(a')+p_{a'}-p_{a}\right)\Big)\cap [r_{a},d_{a}-p_{a}],
\end{multline}
and finally,
\begin{equation}\label{eq:start_times_union}
S(a)=S_{\lvert A_1\rvert-1}(a).
\end{equation}
We claim that in order to solve~(\ref{eq:master_problem}) it is sufficient to maximize over the finite set $X'=\prod_{a\in A_1} S(a)$. Suppose we have an optimal solution $\vect{t^*}$ and assume $A_0=\left\{a\in A_1\ :\ t^*_{a}\not\in S(a)\right\}\neq\varnothing$. Our argument is based on performing a sequence of modification steps on this solution without losing optimality. In order to describe the single modification step we define a graph on the vertex set $A_1$ associated with the current solution: Two arcs $a,a'\in A_1$ are joined by an edge if $\left\{t^*_{a},\,t^*_{a}+p_{a}\right\}\cap\left\{t^*_{a'},\,t^*_{a'}+p_{a'}\right\}\neq\varnothing$. For $a\in A_1$ denote the connected component of $a$ in this graph by $\mathcal C(a)$, and let $h$ be the distance function for this graph: $h(a,a')$ is the minimal length of a path from $a$ to $a'$ if such a path exists, and $\infty$ otherwise.
\begin{lemma}\label{lem:free_jobs}
For $a\in A_0$ and $a'\in\mathcal C(a)$, $t^*_{a'}\not\in S_{\lvert A_1\rvert-h(a,a')-1}(a')$. In particular, $t^*_{a'}\not\in S_0(a')$ for $a'\in\mathcal C(a)$.
\end{lemma}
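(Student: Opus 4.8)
The plan is to prove the statement by induction on the graph distance $d=h(a,a')$, exploiting the observation that the four shift operations appearing in the recursion~(\ref{eq:start_times_recursion}) correspond exactly to the four ways in which an endpoint of one job can coincide with an endpoint of an adjacent job. Throughout, write $m=\lvert A_1\rvert$. Two preliminary facts are needed: first, the sets are nested, $S_k(a')\subseteq S_{k+1}(a')$, which is immediate from~(\ref{eq:start_times_recursion}); second, since the component $\mathcal C(a)$ has at most $m$ vertices, every $a'\in\mathcal C(a)$ satisfies $h(a,a')\leqslant m-1$, so all indices $m-h(a,a')-1$ occurring below are nonnegative.

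For the base case $d=0$ we have $a'=a$, and the claim $t^*_a\notin S_{m-1}(a)=S(a)$ is exactly the hypothesis $a\in A_0$.

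For the inductive step, assume the claim for every vertex of $\mathcal C(a)$ at distance $d$ from $a$, and let $a'\in\mathcal C(a)$ with $h(a,a')=d+1$. I would argue by contradiction: suppose $t^*_{a'}\in S_{m-d-2}(a')$. On a shortest path from $a$ to $a'$, the penultimate vertex $a''$ is a neighbour of $a'$ with $h(a,a'')=d$, so by the induction hypothesis $t^*_{a''}\notin S_{m-d-1}(a'')$. On the other hand, the edge between $a''$ and $a'$ means $\{t^*_{a''},t^*_{a''}+p_{a''}\}\cap\{t^*_{a'},t^*_{a'}+p_{a'}\}\neq\varnothing$, so one of four endpoint coincidences holds, giving $t^*_{a''}$ equal to $t^*_{a'}$, to $t^*_{a'}+p_{a'}$, to $t^*_{a'}-p_{a''}$, or to $t^*_{a'}+p_{a'}-p_{a''}$. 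These are precisely the images of $t^*_{a'}\in S_{m-d-2}(a')$ under the shifts $0,\ +p_{a'},\ -p_{a''},\ +p_{a'}-p_{a''}$ that enter the union defining $S_{m-d-1}(a'')$ in~(\ref{eq:start_times_recursion}), taking the outer arc there to be $a''$ and the summation index to be $a'$. Since $t^*_{a''}\in[r_{a''},d_{a''}-p_{a''}]$ as a feasible start time, it survives the intersection with $[r_{a''},d_{a''}-p_{a''}]$, whence $t^*_{a''}\in S_{m-d-1}(a'')$, contradicting the induction hypothesis. Therefore $t^*_{a'}\notin S_{m-d-2}(a')=S_{m-(d+1)-1}(a')$, completing the step.

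The ``in particular'' assertion then follows from nestedness: because $h(a,a')\leqslant m-1$ yields $m-h(a,a')-1\geqslant 0$, we have $S_0(a')\subseteq S_{m-h(a,a')-1}(a')$, so $t^*_{a'}\notin S_{m-h(a,a')-1}(a')$ forces $t^*_{a'}\notin S_0(a')$. The only delicate points are the index bookkeeping---checking that membership at level $m-d-2$ propagates to level $m-d-1$ through a single application of the recursion---and verifying that the four endpoint-coincidence cases line up one-for-one with the four shifts in~(\ref{eq:start_times_recursion}). I expect this case matching to be the main thing to get exactly right, but it is essentially mechanical once the induction is organised on the distance $h(a,a')$.
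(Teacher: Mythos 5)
Your proof is correct and follows essentially the same route as the paper's: both propagate membership in the sets $S_k$ along a shortest path in the solution graph, using the observation that an edge forces one of the four endpoint coincidences matching the four shifts in~(\ref{eq:start_times_recursion}); the paper writes this as a single chain of deductions from the assumed membership $t^*_{a'}\in S_{|A_1|-h-1}(a')$ up to the contradiction $t^*_a\in S(a)$, while you package the same step as an induction on $h(a,a')$. Your case matching and index bookkeeping are accurate, so nothing further is needed.
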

\begin{proof}
Let $h=h(a,a')$ and assume $t^*_{a'}\in S_{\lvert A_1\rvert-h-1}(a')$.  Let $a'=a_0,a_1,\ldots,a_h=a$ be a minimal path from $a'$ to $a$. Using~(\ref{eq:start_times_recursion}) we deduce $t^*_{a_i}\in\mathcal S'_{\lvert A_1\rvert-h-1+i}(a')$ for $i=1,2,\ldots,h$. For $i=h$ this is $t^*_{a}\in\mathcal S'_{\lvert A_1\rvert-1}(a)$, contradicting the hypothesis $a\in A_0$.
\end{proof}
\begin{lemma}\label{lem:finiteness}
If $W=\varnothing$, then there is an optimal solution for~(\ref{eq:master_problem}) such that $t^*_{a}\in S(a)$ for each $a\in A_1$.
\end{lemma}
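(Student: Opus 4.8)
The plan is to use the fact that, when $W=\varnothing$, the objective $\val$ is a continuous piecewise-affine function of the start-time vector whose linear pieces are cut out by precisely the hyperplanes that the recursion~\eqref{eq:start_times_recursion} is designed to track, and then to show that the maximum in~\eqref{eq:master_problem} is attained at a vertex of that arrangement, every coordinate of which lies in the corresponding $S(a)$. First I would set up the hyperplane arrangement $\mathcal H$ in $\reals^{A_1}$ consisting of the box faces $t^*_a=r_a$, $t^*_a=d_a-p_a$ together with all coincidence hyperplanes $t^*_a-t^*_{a'}=c$ with $c\in\{0,\,p_{a'},\,-p_a,\,p_{a'}-p_a\}$ for $a\ne a'$. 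Inside an open cell of $\mathcal H$ none of these is crossed, so the order of all $2|A_1|+2$ break points $\{0,T\}\cup\{t^*_a,\,t^*_a+p_a:a\in A_1\}$ is fixed; hence the induced intervals, the shut-down sets $I_a$, and the set of arcs carrying capacity in each interval are combinatorially constant. Because $W=\varnothing$, the linear program~\eqref{eq:objective_orig}--\eqref{eq:domains_orig} decouples across intervals, giving $\val(\vect{t^*})=\sum_{i=1}^{n}(t_i-t_{i-1})f_i$, where $f_i$ is the static maximum-flow value in the network with the arcs shut in interval $i$ deleted. On the cell each $f_i$ is constant and each $t_i-t_{i-1}$ is affine in $\vect{t^*}$, so $\val$ is affine there, and it is continuous across cell walls because a vanishing interval contributes a vanishing term.

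The box $X=\prod_a[r_a,d_a-p_a]$ is covered by the finitely many cell-closures, each a bounded polytope on which $\val$ is affine; an affine function on a polytope attains its maximum at a vertex, so the optimum of~\eqref{eq:master_problem} is attained at a vertex $\vect v$ of $\mathcal H$ restricted to $X$, i.e. at a point pinned by $|A_1|$ linearly independent tight hyperplanes of $\mathcal H$. Now I would form the graph $G$ on $A_1$ with one edge per tight coincidence hyperplane at $\vect v$, calling a node $grounded$ if a box-face hyperplane is tight there. A coincidence hyperplane fixes only a difference $v_a-v_{a'}$, so the tight coincidences determine each connected component of $G$ merely up to a common translation; for $\vect v$ to be $0$-dimensional, every component must therefore contain a grounded node.

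Fix a component and a grounded node $a_0$, so $v_{a_0}\in\{r_{a_0},d_{a_0}-p_{a_0}\}=S_0(a_0)$. Any $a$ in that component is joined to $a_0$ by a path $a_0,a_1,\dots,a_h=a$ of tight coincidence edges with $h\le|A_1|-1$, and the edge between $a_{i-1}$ and $a_i$ forces $v_{a_i}=v_{a_{i-1}}+c$ with $c\in\{0,\,p_{a_{i-1}},\,-p_{a_i},\,p_{a_{i-1}}-p_{a_i}\}$, which is exactly the set of shifts applied to $S_k(a')$ in~\eqref{eq:start_times_recursion} with $a'=a_{i-1}$. Since $v_{a_i}\in[r_{a_i},d_{a_i}-p_{a_i}]$, induction along the path yields $v_{a_i}\in S_i(a_i)$, hence $v_a\in S_h(a)\subseteq S_{|A_1|-1}(a)=S(a)$ by~\eqref{eq:start_times_union}. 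Applying this to every component and coordinate gives $\vect v\in\prod_a S(a)$, and since $\vect v$ is optimal this proves the lemma.

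The step I expect to be the main obstacle is the first one: making rigorous that $\val$ is genuinely affine with its breakpoints confined to $\mathcal H$ — in particular justifying the $W=\varnothing$ decoupling $\val=\sum_i(t_i-t_{i-1})f_i$ and the persistence of continuity as intervals collapse — since everything afterwards (the grounding argument and the exact match between the four coincidence shifts and the recursion) is essentially bookkeeping. This vertex viewpoint can be read as the terminal state of the modification process that Lemma~\ref{lem:free_jobs} is built for: repeatedly translating an entire connected group of interior, mutually touching jobs along a value-preserving direction until some job reaches its release date or latest start time is precisely how a cell optimum is driven to a grounded vertex.
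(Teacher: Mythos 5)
Your argument is correct, but it takes a genuinely different route from the paper's. The paper proceeds by iterated local modification: starting from an arbitrary optimal schedule, Lemma~\ref{lem:free_jobs} shows that every job in the connected component of a ``bad'' arc sits strictly inside its start-time window, the shifting result (Lemma~\ref{lem:shifting} in the appendix) lets the whole component be translated without changing the objective, and the component is pushed until either a new coincidence with a job outside it occurs or some job hits a window boundary; termination follows because each step adds an edge to the coincidence graph or shrinks $A_0$. You instead give a one-shot polyhedral argument: $\val$ is continuous and affine on each cell of the explicit arrangement $\mathcal H$, so some optimum is attained at a vertex, and the rank/grounding observation plus the path induction (which is exactly the contrapositive of Lemma~\ref{lem:free_jobs}, with the four coincidence offsets matching the four shifts in~\eqref{eq:start_times_recursion}) shows that \emph{every} vertex lies in $\prod_a S(a)$. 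Both proofs ultimately rest on the same fact --- with $W=\varnothing$ the flow LP decouples across intervals, so $\val$ is locally affine in any free shifting direction --- but you trade the paper's termination bookkeeping for the (routine though not free) verification that the breakpoints of $\val$ are confined to $\mathcal H$ and that $\val$ extends continuously to cell closures; your own flagging of this as the main obstacle is accurate, and the verification does go through since coincidences with $0$ and $T$ reduce to box faces and a collapsing interval contributes a vanishing term. What your route buys is a stronger conclusion (every vertex of the arrangement is a candidate), a transparent derivation of Proposition~\ref{prop:no_storage}, and a clear explanation of why the statement fails with storage ($\val$ is then only concave piecewise-linear, not affine, on each cell). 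Two minor points to tidy: take a \emph{shortest} (hence simple) path from the grounded node so that $h\leqslant\lvert A_1\rvert-1$ is guaranteed, and note that if $r_a=d_a-p_a$ for some $a$ the box is lower-dimensional, but then $t^*_a$ is forced into $S_0(a)$ and the grounding argument applies on the affine hull.
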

\begin{proof}
By Lemma~\ref{lem:free_jobs}, no job on any arc $a'$ in the component of an arc  $a\in A_0$ starts at one of its boundary start times $r_{a'}$ or $d_{a'}-p_{a'}$. So we can shift all jobs in $\mathcal C(a)$ by $\pm\varepsilon$ for some $\varepsilon>0$ without becoming infeasible and without changing the graph associated with the solution. By optimality, such a shift (in either direction) leaves the objective value unchanged (see Lemma~\ref{lem:shifting} in the appendix). Let $\vect{t^*}(\varepsilon)$ be the solution obtained from $\vect{t^*}$ by shifting the jobs in $\mathcal C(a)$ by $\varepsilon$ to the right, i.e.,
\[t^*(\varepsilon)_{a'}=
\begin{cases}
  t^*_{a'}+\varepsilon & \text{for }a'\in\mathcal C(a),\\
  t^*_{a'} & \text{for }a'\in A_1\setminus\mathcal C(a)).
\end{cases}
\]
Pick the smallest value $\varepsilon$ such that for the solution $\vect{t^*}(\varepsilon)$ we have
\begin{itemize}
\item $\{t^*(\varepsilon)_{a'},\,t^*(\varepsilon)_{a'}+p_{a'}\}\cap\{t^*(\varepsilon)_{a''},\,t^*(\varepsilon)_{a''}+p_{a''}\}\neq\varnothing$ for some $a'\in\mathcal C(a)$ and $a''\in A_1\setminus \mathcal C(a)$, or
\item $t^*(\varepsilon)_{a'}=d_{a'}-p_{a'}$ for some $a'\in\mathcal C(a)$.
\end{itemize}
By construction, the graph corresponding to the new solution $t^*(\varepsilon)$ has more edges than the graph for the original solution, or the size of $A_0$ decreases. Hence, this modification can be iterated only a finite number of times, and this iterated process terminates with an optimal solution with $A_0=\varnothing$.
\end{proof}
By Lemma~\ref{lem:finiteness}, there is an optimal solution in the finite set $X'$, and the proof implies the following integrality property.
\begin{proposition}\label{prop:no_storage}
If the input data is integer and $W=\varnothing$, then there is an integral optimal solution, i.e., an optimal solution $\vect{t^*}$ with $t^*_{a}\in\ints$ for all $a\in A_1$.
\end{proposition}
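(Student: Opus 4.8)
The plan is to combine Lemma~\ref{lem:finiteness} with a short induction showing that the candidate start-time sets $S(a)$ consist entirely of integers whenever the data is integer. By Lemma~\ref{lem:finiteness}, there is an optimal solution $\vect{t^*}$ with $t^*_a\in S(a)$ for every $a\in A_1$; hence it suffices to prove that $S(a)\subseteq\ints$ for each $a\in A_1$, and the proposition then follows at once by reading off this optimal solution.

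To establish $S(a)\subseteq\ints$ I would argue by induction on the recursion index $k$ that $S_k(a)\subseteq\ints$ for all $a\in A_1$ simultaneously. For the base case, $S_0(a)=\{r_a,\,d_a-p_a\}$, and since $r_a$, $d_a$ and $p_a$ are all integers by hypothesis, $S_0(a)\subseteq\ints$. For the inductive step, assume $S_k(a')\subseteq\ints$ for every $a'\in A_1$. Inspecting the recursion~\eqref{eq:start_times_recursion}, every element of $S_{k+1}(a)$ either already lies in $S_k(a)$, or is obtained from an element of some $S_k(a')$ by adding one of the shifts $p_{a'}$, $-p_a$, or $p_{a'}-p_a$, each of which is an integer. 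As a sum of integers is an integer, and intersecting with the interval $[r_a,\,d_a-p_a]$ only deletes elements and so cannot introduce non-integers, it follows that $S_{k+1}(a)\subseteq\ints$. Thus $S_k(a)\subseteq\ints$ for all $k$, and in particular, using the definition $S(a)=S_{\lvert A_1\rvert-1}(a)$ from~\eqref{eq:start_times_union}, we obtain $S(a)\subseteq\ints$ for every $a\in A_1$.

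I do not expect any genuine obstacle in this argument: all the substantive work is already carried by Lemma~\ref{lem:finiteness}, which reduces the continuum of possible start times to the finite sets $S(a)$ while preserving optimality. Once that reduction is in hand, the integrality of the $S(a)$ is a purely mechanical consequence of the fact that the recursion shifts integer values only by integer processing times, starting from the integer boundary times $r_a$ and $d_a-p_a$. The sole point warranting a moment's care is confirming that the intersection with $[r_a,\,d_a-p_a]$ in~\eqref{eq:start_times_recursion} cannot manufacture new, non-integer points, which is immediate since intersection only removes elements from a set already contained in $\ints$.
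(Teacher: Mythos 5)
Your proof is correct and matches the paper's intended argument: the paper states that the proof of Lemma~\ref{lem:finiteness} ``implies'' the integrality property without spelling out the details, and those details are precisely the induction you give, namely that $S_0(a)=\{r_a,\,d_a-p_a\}\subseteq\ints$ for integer data and that the recursion~\eqref{eq:start_times_recursion} only shifts by the integer quantities $p_{a'}$, $-p_a$, $p_{a'}-p_a$, so $S(a)\subseteq\ints$ and Lemma~\ref{lem:finiteness} then yields an integral optimal solution. No gaps; you have simply made explicit what the paper leaves implicit.
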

The statement of Proposition~\ref{prop:no_storage} is in general not true for the problem involving storage, as is indicated by the following example.

\begin{example}\label{ex:nonint}
Consider the network in Figure~\ref{fig:Example_noniteger} over a time horizon $T=7$, and suppose that node $v$ has storage capacity $3$ and we have four jobs with parameters shown in the table in  Figure~\ref{fig:Example_noniteger}.
\begin{figure}[htb]
\begin{minipage}[b]{.48\textwidth}
  \begin{center}
 \begin{tikzpicture}[->,>=stealth',shorten >=1pt,auto,node distance=3.2cm,
   thick,main node/.style={circle,draw,font=\normalfont}]

   \node[main node] (1) {$s$};
   \node[main node] (2) [right of=1] {$v$};
   \node[main node] (3) [ right of=2] {$t$};

   \path[every node/.style={font=\normalfont}]
     (1) edge node{$a\ (4)$} (2)
     (2) edge [bend left=50] node [pos=0.5, above]  {$b\ (2)$} (3)
         edge node {c\ (1)} (3)
         edge[bend right=50] node  [pos=0.5, above ]{$d\ (4)$} (3);
 \end{tikzpicture}
   \caption*{Network}
  \end{center} 
\end{minipage}\hfill
\begin{minipage}[b]{.5\linewidth}
\setlength{\tabcolsep}{10pt}
\centering
\begin{tabular}{@{}ccccc@{}} \toprule
	arc & $r$ & $d$ & $p$ \\ \midrule
    $a$ & $0$ & $5$ & $3$ \\
    $b$ & $3$ & $5$ & $2$ \\
    $c$ & $0$ & $5$ & $5$ \\
    $d$ & $0$ & $6$ & $6$ \\ \bottomrule
  \end{tabular}
\caption*{Job parameters}\label{tab:jobs}
\end{minipage}
\caption{Network and job parameters for Example~\ref{ex:nonint}.}\label{fig:Example_noniteger}
\end{figure}

The job on arc $a$ is the only job that can be moved, and its start time $\hat t$ has to be in the
interval $[0,2]$. 
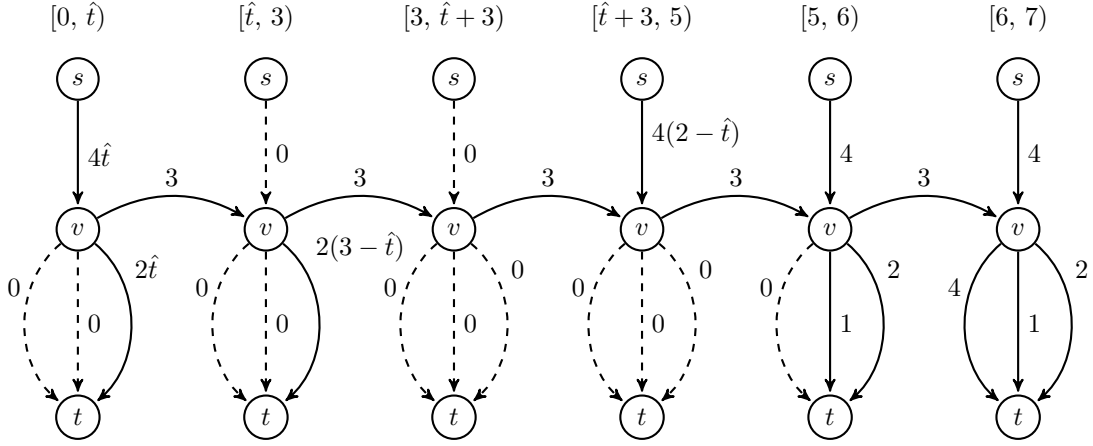
\begin{figure}[htb]
 \centering
 \begin{tikzpicture}[->,>=stealth',shorten >=1pt,auto,node distance=2.5cm,
   thick,main node/.style={circle,draw,font=\normalfont}]

   \node[main node] (1) {$s$};
   \node[main node] (2) [below of=1,node distance =2cm] {$v$};
   \node[main node] (3) [below of=2] {$t$};
   \node (1a) [above of=1,node distance =.5cm,anchor=south] {$[0,\,\hat t)$};
   
   \node[main node] (4) [right of=1] {$s$};
   \node[main node] (5) [right of=2] {$v$};
   \node[main node] (6) [right of=3] {$t$};
   \node (4a) [above of=4,node distance =.5cm,anchor=south] {$[\hat t,\,3)$};
  
   \node[main node] (7) [right of=4] {$s$};
   \node[main node] (8) [right of=5] {$v$};
   \node[main node] (9) [right of=6] {$t$};
   \node (7a) [above of=7,node distance =.5cm,anchor=south] {$[3,\,\hat t+3)$};
  
   \node[main node] (10) [right of=7] {$s$};
   \node[main node] (11) [right of=8] {$v$};
   \node[main node] (12) [right of=9] {$t$};
   \node (10a) [above of=10,node distance =.5cm,anchor=south] {$[\hat t +3,\,5)$};

   \node[main node] (13) [right of=10] {$s$};
   \node[main node] (14) [right of=11] {$v$};
   \node[main node] (15) [right of=12] {$t$};
   \node (13a) [above of=13,node distance =.5cm,anchor=south] {$[5,\,6)$};

   \node[main node] (16) [right of=13] {$s$};
   \node[main node] (17) [right of=14] {$v$};
   \node[main node] (18) [right of=15] {$t$};
   \node (16a) [above of=16,node distance =.5cm,anchor=south] {$[6,\,7)$};
  
   \path[every node/.style={font=\normalfont}]
     (1) edge node{$4\hat t$} (2)
     (2) edge [bend left=50] node [pos=0.3]  {$2\hat t$} (3)
         edge [dashed] node {$0$} (3)
         edge[bend right=50,dashed] node  [pos=0.3,left]{$0$} (3);
   \path[every node/.style={font=\normalfont}]
     (4) edge [dashed] node{$0$} (5)
     (5) edge [bend left=50] node [pos=0.2]  {$2(3-\hat t)$} (6)
         edge [dashed] node {$0$} (6)
         edge[bend right=50,dashed] node  [pos=0.3,left]{$0$} (6);
    \path[every node/.style={font=\normalfont}]
     (7) edge [dashed] node{$0$} (8)
     (8) edge [bend left=50,dashed] node [pos=0.3]  {$0$} (9)
         edge [dashed] node {$0$} (9)
         edge[bend right=50,dashed] node  [pos=0.3,left]{$0$} (9);
   \path[every node/.style={font=\normalfont}]
     (10) edge node[pos=.3] {$4(2-\hat t)$} (11)
     (11) edge [bend left=50,dashed] node [pos=0.3]  {$0$} (12)
         edge [dashed] node {$0$} (12)
         edge[bend right=50,dashed] node  [pos=0.3,left]{$0$} (12);
   \path[every node/.style={font=\normalfont}]
     (13) edge node{$4$} (14)
     (14) edge [bend left=50] node [pos=0.3]  {$2$} (15)
         edge node {$1$} (15)
         edge[bend right=50,dashed] node  [pos=0.3,left]{$0$} (15);
   \path[every node/.style={font=\normalfont}]
     (16) edge node{$4$} (17)
     (17) edge [bend left=50] node [pos=0.3]  {$2$} (18)
         edge node {$1$} (18)
         edge[bend right=50] node  [pos=0.3,left]{$4$} (18);
   \path[every node/.style={font=\normalfont}] 
     (2) edge [bend left=30] node{$3$} (5) 
     (5) edge [bend left=30] node{$3$} (8)
     (8) edge [bend left=30] node{$3$} (11)
     (11) edge [bend left=30] node{$3$} (14)
     (14) edge [bend left=30] node{$3$} (17);
  \end{tikzpicture}
  \caption{The time-expanded networks for the instance in Example~\ref{ex:nonint}. Arcs under
    maintenance are indicated by dashed lines.}
  \label{fig:networks_example}
\end{figure}
The time slicing is given by
\[(t_0,\,t_1,\,\ldots,\,t_6)=(0,\,\hat t,\, 3,\hat t+3,\,5,6,7)\] 
and the corresponding time-expanded network is shown in Figure~\ref{fig:networks_example}
The total capacity of the arcs going into $t$ is  
\[2\hat t+2(3-\hat t)+1+2+4+1+2=16.\]
A total throughput of 16 can be achieved if and only if there is a feasible solution in which all
the arcs into node $t$ are at capacity. In order for arc $b$ to be at capacity in the first two time
periods, it is necessary that the flow on arc $a$ in the first time period is at least 6, which
implies $4\hat t\geqslant 6$, i.e., $\hat t\geqslant 3/2$. On the other hand, the total capacity of
the arcs out of node $s$ is 
\[4\hat t+4(2-\hat t)+4+4=16.\]
So in order to achieve a total throughput of $16$ the arcs out of node $s$ have to be at capacity in
each time period as well. For the first time period, this implies $4\hat t-2\hat t\leqslant 3$, and
therefore $\hat t\leqslant 3/2$. We conclude that for a total throughput of $16$ it is necessary
that the job on arc $a$ starts at time $3/2$. This is also sufficient, as can be seen from the
solution illustrated in Figure~
\begin{figure}[htb]
 \centering
 \begin{tikzpicture}[->,>=stealth',shorten >=1pt,auto,node distance=2.5cm,
   thick,main node/.style={circle,draw,font=\normalfont}]

   \node[main node] (1) {$s$};
   \node[main node] (2) [below of=1,node distance =2cm] {$v$};
   \node[main node] (3) [below of=2] {$t$};
   \node (1a) [above of=1,node distance =.5cm,anchor=south] {$[0,\,3/2)$};
   
   \node[main node] (4) [right of=1] {$s$};
   \node[main node] (5) [right of=2] {$v$};
   \node[main node] (6) [right of=3] {$t$};
   \node (4a) [above of=4,node distance =.5cm,anchor=south] {$[3/2,\,3)$};
  
   \node[main node] (7) [right of=4] {$s$};
   \node[main node] (8) [right of=5] {$v$};
   \node[main node] (9) [right of=6] {$t$};
   \node (7a) [above of=7,node distance =.5cm,anchor=south] {$[3,\,9/2)$};
  
   \node[main node] (10) [right of=7] {$s$};
   \node[main node] (11) [right of=8] {$v$};
   \node[main node] (12) [right of=9] {$t$};
   \node (10a) [above of=10,node distance =.5cm,anchor=south] {$[9/2,\,5)$};

   \node[main node] (13) [right of=10] {$s$};
   \node[main node] (14) [right of=11] {$v$};
   \node[main node] (15) [right of=12] {$t$};
   \node (13a) [above of=13,node distance =.5cm,anchor=south] {$[5,\,6)$};

   \node[main node] (16) [right of=13] {$s$};
   \node[main node] (17) [right of=14] {$v$};
   \node[main node] (18) [right of=15] {$t$};
   \node (16a) [above of=16,node distance =.5cm,anchor=south] {$[6,\,7)$};
  
   \path[every node/.style={font=\normalfont}]
     (1) edge node{$6$} (2)
     (2) edge [bend left=50] node [pos=0.2]  {$3$} (3)
         edge [dashed] node {$0$} (3)
         edge[bend right=50,dashed] node  [pos=0.2,left]{$0$} (3);
   \path[every node/.style={font=\normalfont}]
     (4) edge [dashed] node{$0$} (5)
     (5) edge [bend left=50] node [pos=0.2]  {$3$} (6)
         edge [dashed] node {$0$} (6)
         edge[bend right=50,dashed] node  [pos=0.2,left]{$0$} (6);
    \path[every node/.style={font=\normalfont}]
     (7) edge [dashed] node{$0$} (8)
     (8) edge [bend left=50,dashed] node [pos=0.2]  {$0$} (9)
         edge [dashed] node {$0$} (9)
         edge[bend right=50] node  [pos=0.2,left]{$0$} (9);
   \path[every node/.style={font=\normalfont}]
     (10) edge node[pos=.3] {$2$} (11)
     (11) edge [bend left=50,dashed] node [pos=0.2]  {$0$} (12)
         edge [dashed] node {$0$} (12)
         edge[bend right=50,dashed] node  [pos=0.2,left]{$0$} (12);
   \path[every node/.style={font=\normalfont}]
     (13) edge node{$4$} (14)
     (14) edge [bend left=50] node [pos=0.2]  {$2$} (15)
         edge node {$1$} (15)
         edge[bend right=50,dashed] node  [pos=0.2,left]{$0$} (15);
   \path[every node/.style={font=\normalfont}]
     (16) edge node{$4$} (17)
     (17) edge [bend left=50] node [pos=0.2]  {$2$} (18)
         edge node {$1$} (18)
         edge[bend right=50] node  [pos=0.2,left]{$4$} (18);
   \path[every node/.style={font=\normalfont}] 
     (2) edge [bend left=30] node{$3\ (3)$} (5) 
     (5) edge [bend left=30] node{$0\ (3)$} (8)
     (8) edge [bend left=30] node{$0\ (3)$} (11)
     (11) edge [bend left=30] node{$2\ (3)$} (14)
     (14) edge [bend left=30] node{$3\ (3)$} (17);
  \end{tikzpicture}
  \caption{The flow of value $16$ for $\hat t=3/2$. All arcs except the arcs between copies of the storage node
    $v$ are at capacity.}
  \label{fig:example_solution}
\end{figure}
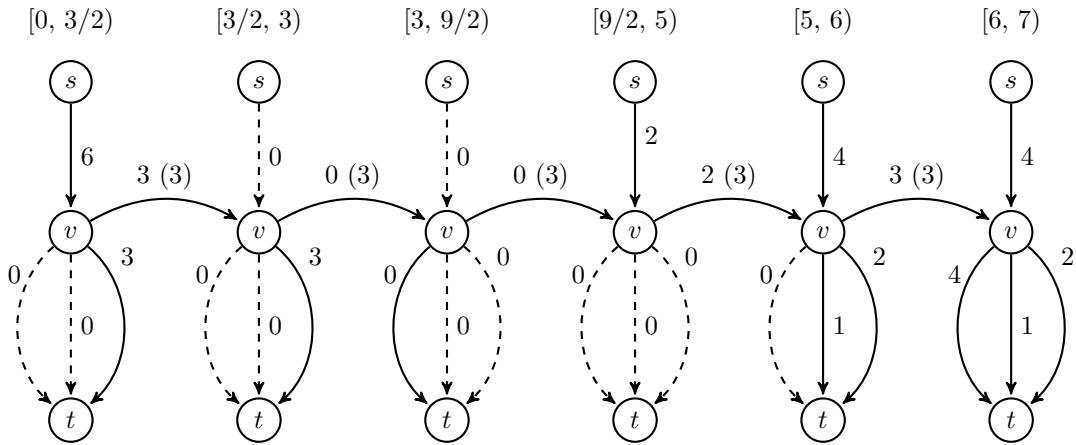
\end{example}

In the following proposition we prove that if there are storage nodes, i.e $W \neq \varnothing $, then we can restrict our attention to rational start times for jobs.
\begin{proposition}\label{prop:rational_start_time}
If $W \neq \varnothing $, then there exists an optimal solution in which all jobs  start at rational times. Further, the denominators of these optimal rational start times do not depend on parameters of the jobs (processing times, release dates or deadlines).
\end{proposition}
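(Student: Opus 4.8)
The plan is to exploit the \textbf{CTIP} formulation. Although CTIP is a mixed integer program, its only integer variables are the binaries $w_{ai}$ and $z_{ai}$, and for each fixed feasible assignment of these binaries the remaining model is a linear program in the continuous variables $t_i$, $x_{ai}$, $x_{vi}$, $\Delta_{ai}$ and $\bar\Delta_{ai}$. Since $w$ and $z$ range over a finite set of $0$–$1$ vectors, the optimal value $\varphi^*$ equals the maximum, over the finitely many feasible pairs $(w,z)$, of the optimal value of the associated LP. First I would fix one such $(w,z)$ and observe that the corresponding LP is feasible (an optimal $\vect{t^*}$ together with a maximising flow provides a feasible point) and bounded (every variable lies in a bounded range: $t_i\in[0,T]$, $0\leqslant x_{ai}\leqslant Tu_a$, $0\leqslant x_{vi}\leqslant u_v$). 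A nonempty bounded polyhedron is pointed, so the optimum is attained at a vertex, which is a rational point since the problem data are rational; extracting the start-time coordinates $t^*_a$ of this vertex already yields an optimal solution with rational start times, proving the first assertion.

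For the second assertion I would invoke the standard description of a vertex as the unique solution of a nonsingular square subsystem $A_B z = b_B$ formed from the tight constraints, so that by Cramer's rule every coordinate equals $\det(A_B^{(j)})/\det(A_B)$ and hence has denominator dividing $\lvert\det(A_B)\rvert$. The crux is then a bookkeeping claim: \textbf{every entry of the constraint matrix lies in $\{0,\pm1\}\cup\{\pm u_a:a\in A\}\cup\{\pm u_v:v\in W\}$, independent of $r_a,d_a,p_a$.} I would verify this constraint by constraint. The flow-conservation and boundary constraints have $\pm1$ entries; the capacity constraints $x_{ai}\leqslant(t_i-t_{i-1})u_a$ of~(\ref{eq:flow-int-ub}) and $x_{ai}\leqslant\bar\Delta_{ai}u_a$ of~(\ref{eq:flowcap}) contribute entries $1$ and $\pm u_a$; (\ref{eq:delsum}) and~(\ref{eq:sorttimes}) contribute $\pm1$; and in~(\ref{eq:reldate}), (\ref{eq:duedate}), (\ref{eq:shuttime}), (\ref{eq:dellogic1}) and~(\ref{eq:dellogic2}) the job parameters $r_a,d_a,p_a$ (and the horizon $T$) appear only on the right-hand side once the binaries $w_{ai}$ are fixed to constants. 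Consequently $\det(A_B)$ depends only on the capacities and on the combinatorial pattern of tight constraints, never on the job parameters.

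It follows that the denominator of every start time in the chosen vertex divides the integer $q:=\operatorname{lcm}\bigl\{\lvert\det(A_B)\rvert\bigr\}$, the least common multiple taken over all nonsingular square submatrices $A_B$ of the coefficient matrices arising from the finitely many assignments $(w,z)$. This $q$ is determined by the network (its capacities and size) and by the number of jobs, but is manifestly independent of the processing times, release dates and deadlines, which is exactly what the second assertion requires. Since the global optimum is attained in whichever assignment's LP realises $\varphi^*$, the resulting optimal $\vect{t^*}$ has all start times expressible with denominator $q$.

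I expect the main obstacle to be the verification that no job parameter leaks into a left-hand-side coefficient. This is most delicate for the linearised duration variables: constraints~(\ref{eq:dellogic1}) and~(\ref{eq:dellogic2}) do contain $p_a$ and $T$, and one must check that after fixing $w_{ai}\in\{0,1\}$ these quantities multiply only the constant $w_{ai}$ or $1-w_{ai}$ and hence sit entirely on the right-hand side, leaving the coefficients of $\Delta_{ai}$ and $\bar\Delta_{ai}$ equal to $1$. A secondary point requiring care is confirming that each fixed-binary LP genuinely attains its optimum at a vertex, so that the Cramer's-rule bound applies to an actual optimal solution rather than merely to a feasible one.
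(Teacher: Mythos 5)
Your proposal is correct and follows essentially the same route as the paper: fix the binary variables of CTIP, observe that the optimum of the resulting LP is attained at a vertex $B^{-1}b$, apply Cramer's rule, and note that after fixing the binaries the job parameters $r_a,d_a,p_a$ migrate entirely to the right-hand side, so the only non-unit matrix entries are the capacities $\pm u_a$ and hence $\lvert\det(B)\rvert$ is independent of the job data. The paper obtains rationality by citing Meyer's theorem rather than your direct bounded-polyhedron argument, and it does not spell out the lcm over bases, but these are cosmetic differences.
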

\begin{proof}
Observe that the constraint matrix and the right hand side vector for the CTIP formulation contain only integer entries. So by the fundamental theorem for Mixed Integer Linear Programs~\cite{Meyer1974} the convex hull of the set of feasible solutions for CTIP is a rational polyhedron, with all its extreme points rational. Hence there exists an optimal solution with rational start times.

Now this optimal solution of CTIP is obtained at some leaf node of the corresponding branch and bound tree. At this leaf node, all binary variables in CTIP are fixed, and the start times of the jobs are obtained by solving the resulting linear programming formulation so as to yield an extreme point (basic feasible) solution. Such an extreme point has the form $B^{-1}b$, where $B$ is a square nonsingular submatrix of the constraint matrix and $b$ is a corresponding vector of the constant terms in the constraints. By Cramer's rule, $B^{-1} = \frac{1}{|B|}B^*$ where $B^*$ and $|B|$ are the adjoint and determinant of the matrix $B$ respectively. For integer data, the entries of $B^*$ and $b$ are integer, as is $|B|$. So the (rational) solution to the linear program consists of integer multiples of $\frac{1}{|B|}$. Now the claim follows by observing that once all binary variables are fixed in CTIP, the entries in the constraint matrix of the remaining linear program are $0$, $\pm 1$, or an element of $\{\pm u_a : a\in A\}$, and all entries of the right hand side vector are integers. In particular, the only constraints with coefficients {\em not} in $\{-1,0,1\}$ are (\ref{eq:flow-int-ub}) and (\ref{eq:flowcap}).
\end{proof}

Although the exact continuous time formulation CTIP has allowed us to establish the above important property of the problem with storage, it unfortunately performs very poorly in practice (as we shall see in Section~\ref{sec:computations}). Thus other, more efficient, approaches to solving the problem are of interest. In the next section we consider such approaches to finding upper bounds, and in the subsequent section consider lower bounds.

\section{Upper bounds}\label{sec:bounds}
In this section we present an approximate MIP model which gives upper bounds for the problem. This upper bound model can be obtained from any given discretization $0=t_0<t_1<\cdots<t_n=T$ of the time horizon. Importantly, here the $(t_i)_{i=0,\dots,n}$ are fixed input parameters, not decision variables as they were in the CTIP model, and $n$ is an arbitrary (given) positive integer. For $i\in\{1,\ldots,n\}$, we refer to $[t_{i-1},t_i)$ as interval $i$. For $ a \in A_1$, let $\mathcal S_a$ be the set of intervals in which job $a$ can start, and let $\mathcal T_a$ be the set of intervals in which job $a$ can be partially processed. More precisely, $\mathcal S_a=\{i\ : r_{a}<t_{i},\ d_{a}-p_{a}\geqslant t_{i-1}\}$ and $\mathcal T_{a}=\{i\ :\ d_{a}>t_{i-1},\ r_{a}<t_{i}\}$. Our upper bound model has the following variables:
\begin{itemize}
\item $x_{ai}$: total flow on arc $a$ in interval $i$ ($a\in A$, $i\in\{1,\ldots,n\}$),
\item $x_{vi}$: amount of flow stored in node $v$ at time $t_i$ ($v\in W$, $i\in\{1,\ldots,n\}$),
\item $y_{ai}$: binary start indicator, i.e., $y_{ai}=1\iff t_{i-1}\leqslant t^*_{a}<t_{i}$ ($ a \in A_1$, $i\in\mathcal S_a$), and
\item $z_{ai}$: fraction of interval $i$ for which job $a$ is processed ($ a \in A_1$, $i\in\mathcal T_a$).
\end{itemize}
To ensure that every job is processed exactly once, we have the constraints
\begin{align}
\sum_{i\in \mathcal S_a}y_{ai} &= 1 && \forall\ a \in A_1. \label{eq:job_starts}
\end{align}
Clearly, the job on an arc $a \in A_1$ cannot be processed for more than  $\min\{t_i,d_a\} - \max\{t_{i-1},r_a\}$ time in an interval $i \in \mathcal T_a$. So we add the constraints
\begin{align}
  (t_i-t_{i-1})z_{ai} &\leqslant \min\{t_i,d_a\} - \max\{t_{i-1},r_a\} && \forall\  a \in A_1, i\in \mathcal T_a. \label{eq:bounds_z}
\end{align}
The processing times are enforced by the constraints
\begin{align}
  \sum_{i\in\mathcal T_a}(t_i-t_{i-1})z_{ai} &= p_{a} && \forall\ a \in A_1. \label{eq:processing_TDIP}
\end{align}
Next we describe how the $z$ variables and the $y$ variables are linked. For $a \in A_1$ and $i\in\mathcal S_a$, let $Q_{ai} \subseteq \mathcal T_a $ be the set of intervals that can be affected by job $a$ when it starts in interval $i$, i.e.,
\[Q_{ai}=\{k \in \mathcal T_a\ :\ k\geqslant i,\ t_{i}+p_{a}>t_{k-1}\}.\]
When job $a$ starts in interval $i$, then it has to be completed within $Q_{ai}$, which gives the constraints
\begin{align}
  \sum_{k\in Q_{ai}}(t_i-t_{i-1})z_{ak} &\geqslant p_{a}y_{ai} && \forall\ a \in A_1,\ i\in\mathcal S_a. \label{eq:processing_2_TDIP}
\end{align}
In the opposite direction, let $P_{ai}$ be the set of intervals $k$ such that starting job $a$ in interval $k$ can affect interval $i$.  Let $P^*_{ai}$ be the subset of these intervals $k$ such that starting job $a$ in interval $k$ closes arc $a$ for the whole interval $i$ or until completion of job $a$. More precisely,
\begin{align*}
P_{ai} &= \{k \in \mathcal S_a\ : t_{k-1}\leqslant t_{i-1}<t_{k}+p_{a}\}, \\
P^*_{ai} &= \{k \in P_{ai}\ : \ \max\{t_{k-1},r_{a}\}+p_{a}\geqslant\min\{t_{i},d_{a}\}\}.
\end{align*}
For $k\in P_{ai}$, let $\mu^+_{aki}$ and $\mu^-_{aki}$ be upper and lower bounds for $(t_i-t_{i-1})z_{ai}$ if job $a$ starts in interval $k$, i.e.,
\begin{align*}
\mu^+_{aki} &= \min\{t_{i},d_{a}, t_k+p_{a}\}- \max\{t_{i-1},\,r_{a}\}\\
\mu^-_{aki} &=
\begin{cases}
\max\{0,\,\min\{t_k,\,d_{a}\} - (d_{a}-p_{a})\} & \text{for }k = i ,\ i \in \mathcal S_a, \\
\min\{t_{i},\,d_{a}\}-\max\{t_{i-1},\,r_{a}\} & \text{for }k\in P^*_{ai}\setminus \{i\}, \\
\max\{0,\,\max\{t_{k-1},\,r_{a}\}+p_{a}-t_{i-1}\} & \text{for }k\in P_{ai}\setminus (P^*_{ai} \cup \{i\}).
\end{cases}
\end{align*}
We add the constraints
\begin{equation}
\sum_{k\in P_{ai}} \mu^-_{aki}y_{ak} \ \ \leqslant\ \  (t_i-t_{i-1})z_{ai}\ \ \leqslant \sum_{k\in P_{ai}} \mu^+_{aki}y_{ak} \qquad\qquad  \forall\ a \in A_1,\ i\in \mathcal T_a.\label{eq:z_bounds_TDIP}
\end{equation}
 The arc capacities are described by
\begin{align}
x_{ai} &\leqslant (t_{i}-t_{i-1})(1-z_{ai})u_a && \forall\ a\in A_1,\ i\in\mathcal T_a, \label{arc_cap_2_TDIP_1}\\
x_{ai} &\leqslant (t_{i}-t_{i-1})u_a && \forall\ a\in A_1,\ i\in\{1,\ldots,n\}\setminus \mathcal T_a.\label{arc_cap_2_TDIP}
\end{align}
Finally adding the flow conservation constraints~(\ref{eq:flowcon_1_orig}) and~(\ref{eq:flowcon_2_orig}), the arc capacity constraints~(\ref{eq:arc_cap_orig1}) for the arcs in the set $A\setminus A_1$, together with the storage node capacity constraints~(\ref{eq:node_cap_orig}) and~(\ref{eq:boundary_orig}), the upper bound for~(\ref{eq:master_problem}) associated with the given time discretization is
\begin{equation}\label{eq:TDIP}
\varphi=\max_{(\vect x, \vect y, \vect z)\in F} \Bigg\{\sum_{i=1}^{n}\left(\sum_{a\in\delta^{\text{out}}(s)}x_{ai}-\sum_{a\in\delta^{\text{in}}(s)}x_{ai}\right)\ :\ \text{(\ref{eq:flowcon_1_orig}), (\ref{eq:flowcon_2_orig}), (\ref{eq:arc_cap_orig1}), (\ref{eq:node_cap_orig}), (\ref{eq:boundary_orig}), (\ref{eq:job_starts})--(\ref{arc_cap_2_TDIP})}\Bigg\}
\end{equation}
where
$$
  F = \reals_{\geqslant 0}^{(\lvert A\rvert+\lvert W\rvert)n} \times \{0,1\}^{\sum_{ a \in A_1}\lvert \mathcal S_a\rvert}\times [0,1]^{\sum_{ a \in A_1}\lvert\mathcal T_a\rvert}.
$$
We refer to this formulation as the {\em Time Discretized Integer Program}, denoted by TDIP. Since it consists only of constraints that must be satisfied by any feasible solution, we have the following result.
\begin{proposition}\label{prop:upper_bound}
Let $0=t_0<t_1<\cdots<t_n=T$ be any time discretization. Then $\varphi^*\leqslant \varphi$, where $\varphi^*$ is the optimal value of the original problem~(\ref{eq:master_problem}) and $\varphi$ is the optimal value of~(\ref{eq:TDIP}).
\end{proposition}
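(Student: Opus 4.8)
The plan is to show that TDIP is a \emph{relaxation} of the continuous problem, so that its optimal value can only be larger. Concretely, from any feasible start-time vector $\vect{t^*}\in X$ together with an optimal flow $(x_{ai},x_{vi})$ realising $\val(\vect{t^*})$, I would construct a point $(\vect x,\vect y,\vect z)\in F$ that satisfies every constraint appearing in~\eqref{eq:TDIP} and whose objective equals $\val(\vect{t^*})$. Since $\varphi$ is the maximum of that same objective over the TDIP feasible region, this gives $\val(\vect{t^*})\leqslant\varphi$ for every $\vect{t^*}\in X$, and maximising the left-hand side over $X$ yields $\varphi^*\leqslant\varphi$.

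For the construction I would exploit the remark that, without loss of generality, the flow rates of the optimal solution for $\vect{t^*}$ are piecewise constant on the breakpoints induced by $\vect{t^*}$; refining those breakpoints to also contain the fixed grid $t_0<\cdots<t_n$ keeps the rates piecewise constant. Then I set $x_{ai}$ to be the integral of the flow rate of arc $a$ over interval $[t_{i-1},t_i)$, and $x_{vi}$ to be the continuous storage level at time $t_i$. I let $y_{ai}=1$ for the unique interval with $t_{i-1}\leqslant t^*_a<t_i$ (this interval lies in $\mathcal S_a$ because $t^*_a\in[r_a,d_a-p_a]$), and I set $(t_i-t_{i-1})z_{ai}$ equal to the length of the overlap $[t^*_a,t^*_a+p_a]\cap[t_{i-1},t_i)$, which lies in $[0,t_i-t_{i-1}]$ so that $z_{ai}\in[0,1]$.

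The routine part of the verification is to confirm the ``structural'' constraints. Uniqueness of the start interval gives~\eqref{eq:job_starts}; summing the overlap lengths over all intervals gives total processing $p_a$, i.e.~\eqref{eq:processing_TDIP}, and each overlap is bounded by the overlap of $[r_a,d_a]$ with $[t_{i-1},t_i)$, giving~\eqref{eq:bounds_z}. Because the maintenance window $[t^*_a,t^*_a+p_a]$ falls entirely inside the intervals collected in $Q_{ai}$ when the job starts in interval $i$, constraint~\eqref{eq:processing_2_TDIP} holds. The capacity constraints~\eqref{arc_cap_2_TDIP_1}--\eqref{arc_cap_2_TDIP} follow since the continuous flow is zero while the arc is shut and at most $u_a$ per unit time otherwise, so the integrated flow over interval $i$ is at most $u_a$ times the open time $(t_i-t_{i-1})(1-z_{ai})$. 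Flow conservation~\eqref{eq:flowcon_1_orig}--\eqref{eq:flowcon_2_orig}, the non-maintenance capacity bound~\eqref{eq:arc_cap_orig1}, the storage bound~\eqref{eq:node_cap_orig} and the boundary conditions~\eqref{eq:boundary_orig} are all inherited by integration from the corresponding instantaneous properties of the continuous solution, and the aggregated net out-flow of $s$ reproduces the objective value $\val(\vect{t^*})$.

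The main obstacle will be the coupling constraints~\eqref{eq:z_bounds_TDIP}, which link $z_{ai}$ to the start indicator through the bounds $\mu^-_{aki}$ and $\mu^+_{aki}$. Here I would argue as follows: if the job starts in interval $k$, then its actual start $t^*_a$ ranges over $[t_{k-1},t_k)$, and the processing amount $(t_i-t_{i-1})z_{ai}$ landing in a later interval $i$ is a function of $t^*_a$ whose entire range I must show is contained in $[\mu^-_{aki},\mu^+_{aki}]$; the three cases defining $\mu^-_{aki}$ correspond exactly to $k=i$, to $k\in P^*_{ai}$ (where interval $i$ is fully blocked or the job runs to its deadline), and to the remaining intervals of $P_{ai}$. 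I also need the complementary fact that if the start interval $k$ lies \emph{outside} $P_{ai}$ then interval $i$ receives no maintenance, forcing $z_{ai}=0$ so that both sides of~\eqref{eq:z_bounds_TDIP} vanish for the single active indicator. This monotonicity-and-endpoint case analysis on the overlap length, rather than any of the flow-side estimates, is where the real work lies; everything else reduces to interval arithmetic and integration.
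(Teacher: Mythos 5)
Your proposal is correct and follows the same route as the paper, which disposes of this proposition in a single sentence by observing that every constraint of TDIP is satisfied by (the natural lift of) any feasible solution of the original problem, so TDIP is a relaxation. Your write-up simply carries out in detail the verification that the paper leaves implicit, correctly locating the only nontrivial checking in the coupling constraints~\eqref{eq:z_bounds_TDIP}.
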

In Section~\ref{sec:computations}, we experiment with two variants of TDIP. The time indexed model TDIP(TI) refers to unit time discretization, i.e., $n=T$ and $t_i=i$ for $i=0,1,\ldots,T$, while the release date/deadline model TDIP(RD) refers to the discretization consisting of the release times and deadlines of all jobs, i.e. the time discretization with $\{t_i\ :\ i=0,1\dots,n\}$ equal to the set
$$
 \mathcal D := \{r_a\ :\ a\in A_1\}\cup \{d_a\ :\ a\in A_1\}\cup\{0,T\}.
$$

\section{Lower bounds}\label{sec:lower_bounds}
If the same model employs a discretization $0=t_0 < t_1 < \dots < t_n = T$ with the property that
\begin{enumerate}
\item[(i)] $\bigcup_{a\in A_1} \{r_{a},d_{a}\} \subseteq \Gamma \stackrel{\text{def}}{=} \{t_0,t_1,\dots,t_n\}$, and
\item[(ii)] for any $i\in \{0,1,\dots,n\}$ and any $a\in A_1$, if $t_i\in [r_{a},d_{a}-p_{a}]$ then $t_i + p_{a} \in \Gamma$ and if $t_i\in [r_{a}+p_{a},d_{a}]$ then $t_i - p_{a} \in \Gamma$,
\end{enumerate}
then with minor modifications to some model parameters and the additional restriction that $z_{ai}\in \{0,1\}$ for all $a\in A_1$, $i\in\mathcal T_j$, any feasible solution to the model will provide a lower bound on the optimal value of the original problem~(\ref{eq:master_problem}). We refer to a discretization satisfying (i) and (ii) as {\em conformal}, meaning conforming with the job parameters. To derive the modifications, we first re-interpret the variables:
\begin{itemize}
\item $y_{ai}$: binary start indicator, i.e., $y_{ai}=1\iff t_{i-1} = t^*_{a}$ ($a\in A_1$, $i\in\mathcal S_a$), and
\item $z_{ai}$: binary indicator that job $a$ is processed for all of interval $i$ ($a\in A_1$, $i\in\mathcal T_a$).
\end{itemize}
Note that for a conformal discretization, any job starting at the start of a time interval must end at the end of a time interval. We now re-define
\[P_{ai} = \{ k\in {\cal S}_a\ :\ t_{k-1} + p_{a} \geqslant t_i\},\]
which for a conformal discretization yields the set of intervals $k$ for which if the job starts at $t_{k-1}$ it is processed for the whole duration of interval $i$. Also observe that in this case, for $k \in P_{ai}$, $\mu^+_{aki} = t_i - t_{i-1}$ and we re-define $\mu^-_{aki} = t_i - t_{i-1}$ also. As a consequence, (\ref{eq:z_bounds_TDIP}) simplifies to
\begin{align}
  z_{ji} &= \sum_{k\in P{ai}} y_{jk} && \forall\ a\in A_1,\ i\in \mathcal T_a.\label{eq:z_bounds_TDIPLB}
\end{align}
The complete model is given by
\begin{multline}\label{eq:TDIPLB}
\underline{\varphi}=\max_{(\vect x, \vect y, \vect z)\in F} \Bigg\{\sum_{i=1}^{n}\left(\sum_{a\in\delta^{\text{out}}(s)}x_{ai}-\sum_{a\in\delta^{\text{in}}(s)}x_{ai}\right)\ :\ \text{(\ref{eq:flowcon_1_orig}), (\ref{eq:flowcon_2_orig}), (\ref{eq:arc_cap_orig1}), (\ref{eq:node_cap_orig}), (\ref{eq:boundary_orig})},\\
\text{(\ref{eq:job_starts})--(\ref{eq:processing_2_TDIP}), (\ref{arc_cap_2_TDIP_1}),  (\ref{arc_cap_2_TDIP}), (\ref{eq:z_bounds_TDIPLB})},
z \in \{0,1\}^{\sum_{j\in J}\lvert\mathcal T_j\rvert}.\Bigg\}
\end{multline}
We have the following proposition.
\begin{proposition}\label{prop:lower_bound}
Let $0=t_0<t_1<\cdots<t_n=T$ be a conformal time discretization. Then $\varphi^*\geqslant \underline{\varphi}$, where $\varphi^*$ is the optimal value of the original problem~(\ref{eq:master_problem}), and $\underline{\varphi}$ is the optimal value of~(\ref{eq:TDIPLB}).
\end{proposition}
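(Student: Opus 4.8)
The plan is to show that \emph{every} feasible solution of the lower-bound model~\eqref{eq:TDIPLB} can be read off as a genuine feasible schedule of the original problem~\eqref{eq:master_problem}, together with a flow-over-time for that schedule attaining the same objective value. Since $\varphi^*$ maximises over all feasible schedules and their flows, this immediately gives $\varphi^*\geqslant\underline{\varphi}$; the argument is thus the feasibility-preserving counterpart of the relaxation argument behind Proposition~\ref{prop:upper_bound}.

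First I would extract the schedule. Fix a feasible $(\vect x,\vect y,\vect z)$ of~\eqref{eq:TDIPLB}. Because the $y_{ai}$ are binary and~\eqref{eq:job_starts} forces $\sum_{i\in\mathcal S_a}y_{ai}=1$, each job $a$ has a unique start interval $\sigma(a)$ with $y_{a\sigma(a)}=1$; set $t^*_a=t_{\sigma(a)-1}$. I would check $t^*_a\in[r_a,d_a-p_a]$, so that $\vect{t^*}\in X$: the membership $\sigma(a)\in\mathcal S_a$ gives $t_{\sigma(a)-1}\leqslant d_a-p_a$ directly, and gives $r_a<t_{\sigma(a)}$, whence $r_a\leqslant t_{\sigma(a)-1}$ because $r_a\in\Gamma$ by conformality~(i) and no grid point lies strictly between the consecutive points $t_{\sigma(a)-1}$ and $t_{\sigma(a)}$. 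Conformality~(ii) then yields $t^*_a+p_a\in\Gamma$, so the job occupies a whole block of consecutive discretization intervals and every breakpoint of the schedule lies in $\Gamma=\{t_0,\dots,t_n\}$.

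The crux is to certify that the flow $\vect x$ respects the outages of this schedule. I would prove the one implication that matters for a lower bound: if arc $a$ is genuinely shut throughout interval $i$ under $\vect{t^*}$, i.e.\ $t^*_a\leqslant t_{i-1}<t_i\leqslant t^*_a+p_a$, then $z_{ai}=1$. Such an $i$ lies in $\mathcal T_a$ (since $r_a\leqslant t^*_a\leqslant t_{i-1}<t_i\leqslant d_a$), and since only $y_{a\sigma(a)}=1$ the reinterpreted linking constraint~\eqref{eq:z_bounds_TDIPLB} gives $z_{ai}=\sum_{k\in P_{ai}}y_{ak}$, which equals $1$ precisely when $\sigma(a)\in P_{ai}$; the redefinition $P_{ai}=\{k\in\mathcal S_a:t_{k-1}+p_a\geqslant t_i\}$ together with $t^*_a+p_a\geqslant t_i$ puts $\sigma(a)\in P_{ai}$, so $z_{ai}=1$. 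With $z_{ai}\in\{0,1\}$, constraint~\eqref{arc_cap_2_TDIP_1} becomes $x_{ai}\leqslant(t_i-t_{i-1})(1-z_{ai})u_a$, which forces $x_{ai}=0$ wherever the arc is shut and leaves $x_{ai}\leqslant(t_i-t_{i-1})u_a$ elsewhere, matching~\eqref{eq:outages_orig} and~\eqref{eq:arc_cap_orig}. If $z_{ai}=1$ on an interval where the arc is in fact open, the only effect is an extra restriction, which cannot destroy feasibility for the original problem.

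It remains to pass from the fixed grid $\{t_i\}$ to the coarser intervals induced by $\vect{t^*}$ and conclude. Since every schedule breakpoint lies in $\Gamma$, each induced interval is a union of consecutive grid intervals, and I would aggregate $\vect x$ over these blocks: summing the flow-conservation equations~\eqref{eq:flowcon_1_orig}--\eqref{eq:flowcon_2_orig} telescopes the interior storage terms, the capacity bounds add up to the coarse capacities, and the storage-capacity and boundary conditions~\eqref{eq:node_cap_orig}--\eqref{eq:boundary_orig} carry over verbatim. The aggregated flow is therefore feasible for the maximum-flow problem defining $\val(\vect{t^*})$ and has objective value equal to the TDIPLB objective of $(\vect x,\vect y,\vect z)$. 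Hence $\val(\vect{t^*})\geqslant\underline{\varphi}$, and since $\vect{t^*}\in X$ we obtain $\varphi^*\geqslant\val(\vect{t^*})\geqslant\underline{\varphi}$. I expect the main obstacle to be precisely this outage-matching step: one must lean on conformality~(ii) to know that jobs fill whole intervals, and it is important to verify only the direction ``arc shut $\Rightarrow z_{ai}=1$'' rather than an exact equivalence, since the reverse can fail yet is harmless for a lower bound.
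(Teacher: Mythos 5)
Your proof is correct, and it supplies exactly the argument the paper leaves implicit: the paper states Proposition~\ref{prop:lower_bound} without proof, relying on the reader to see that for a conformal discretization every feasible solution of~(\ref{eq:TDIPLB}) encodes a genuine schedule $\vect{t^*}\in X$ together with a feasible flow of the same value. Your verification of the two points that actually require conformality --- that $t^*_a=t_{\sigma(a)-1}\in[r_a,d_a-p_a]$ and $t^*_a+p_a\in\Gamma$, so that via~(\ref{eq:z_bounds_TDIPLB}) one gets $z_{ai}=1$ (hence $x_{ai}=0$) on every interval the job covers --- together with the one-directional nature of the outage-matching step, is the right way to make the paper's assertion rigorous.
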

As a consequence, if the discretization $\Gamma$ contains $\{t^*_{a} : a \in A_1\}$, where $\vect t^*$ is an optimal solution of~(\ref{eq:master_problem}), it must be that $\underline\varphi = \val(\vect t^*)$.

Note that for integer data, the unit time discretization is conformal. Thus TDIP(TI) with the additional restriction $z_{ai}\in\{0,1\}$ for all $a\in A_1$, $i\in\mathcal T_a$, yields a lower bound (and feasible solution) for~(\ref{eq:master_problem}). This model is also attractive since by Proposition~\ref{prop:no_storage} it yields an optimal solution for~(\ref{eq:master_problem}) in the case that there are no storage nodes. We denote this final variant with binary $z$ variables by TDIP(TI)$^\text{B}$.

The magnitude of $n$ in any conformal discretization is likely to be large, and hence the lower
bound integer programming model is likely to be slow to solve. Thus we also seek alternative methods
of generating feasible solutions (and hence lower bounds) efficiently. 
For each of the formulations presented so far, the values of the $z$ variables obtained by solving LP
relaxations, for instance in a Branch\&Bound tree, may contain information that can be useful in
guiding construction heuristics for finding good schedules. Although such $z$ values may be fractional, and may have positive values that, for a given job, are not consecutive over time, their ``spread'' across intervals and ``intensity'' within intervals may nevertheless be a useful guide for simple ``repair'' heuristics. We make these ideas precise in what follows, describing two such heuristics. Both heuristics take as input a vector $z$ with $z_{ai} \in [0,1]$ for all $a\in A_1$ and $i\in {\cal T}_a$. Both derive start times $t^*_{a}$ for all $a\in A_1$, which can then be evaluated by solving the max flow problem~(\ref{eq:objective_orig})--(\ref{eq:domains_orig}), so providing a lower bound for~(\ref{eq:master_problem}).

In our computational results, we apply these heuristics by solving a TDIP formulation with a given (relatively short) time limit; we then use the $z$ vector from the LP solution at the node of the branch and bound tree yielding the best upper bound, and the $z$ vector from the best feasible solution for the formulation found so far, as input to the heuristics, in turn.

\subsection{The Projection heuristic}\label{subsec:projection}

In this heuristic we use the ``intensity'' of the $(z_a)_{i=1,\dots,n}$ variables for each $a\in A_1$ as a guide to the choice of start time for the job, $t_a^*$. A given start time $t^*_a$ can be thought of as inducing a vector $(\xi_a)_{i=1,\dots,n}$ by interpreting $\xi_{ai}$ as the proportion of interval $i$ in which the job is being processed if the job starts at time $t^*_a$ and is processed continuously until time $t^*_a+p_a$. This heuristic chooses the vector of start times $t^*$ so as to minimize the $\ell_1$-norm distance between the $\xi$ values induced by $t^*$ and the given $z$. In this sense, the heuristic ``projects'' $z$ onto the set of feasible schedules. Details of the method are as follows.

For $t\in[r_{a},d_{a}-p_{a}]$ and $i\in\mathcal T_a$, let $l_{ai}(t)$ be the time for which job $a$ is processed in interval $i$ when it starts at time $t$, i.e.,
\[l_{ai}(t)=
\begin{cases}
\min\{t_{i},\,t+p_{a}\}-\max\{t,\,t_{i-1}\} & \text{if }t<t_i\text{ and }t+p_{a}>t_{i-1},\\
0 &\text{otherwise}.
\end{cases}\]
The total deviation between the actual processing times in the intervals and the given values $(z_{a})_{i=1,\dots,n}$ is measured by $f(t)=\sum_{i\in\mathcal T_a}\left\lvert(t_i - t_{i-1})z_{ai} - l_{ai}(t)\right\rvert$, and we choose the start time $t^*_a$ as
\begin{equation}\label{eq:proj}
t^*_{a}\in\argmin\left\{f(t)\ :\ t\in[r_{a},d_{a}-p_{a}]\right\}.
\end{equation}
The value $t^*_a$ can be determined as follows. For $i\in \mathcal S_a$, let $\mathcal E_{ai}=\{k\ :\ t_{i-1}+p_{a}\leqslant t_k,\ t_i+p_{a}>t_{k-1}\}$ be the set of intervals in which job $a$ can be completed when it starts in interval $i$. For $k\in\mathcal E_{ai}$, a minimizer of $f(t)$ such that job $a$ starts in interval $i$ and ends in interval $k$ is $t^*_{aik}=t_i-\alpha$, where $(\alpha,\beta)$ is a minimizer of $\lvert\alpha-(t_i - t_{i-1})z_{ai}\rvert+\lvert\beta-(t_k - t_{k-1})z_{ak}\rvert$ subject to $\alpha+\beta=p_{a}-(t_{k-1}-t_i)$ and $\alpha,\beta\geqslant 0$, and finally we can put $t^*_{a}=\argmin\{f(t^*_{aik})\ :\ i\in\mathcal S_a,\ k\in\mathcal E_{ai}\}$.

\subsection{The Centre-of-Mass heuristic}\label{subsec:COM}

In this heuristic we use the ``spread'' of the $(z_a)_{i=1,\dots,n}$ variables for each $a\in A_1$ as a guide to the choice of start time for the job, $t_a^*$. For each arc $ a \in A_1 $ we view the shutdown times $\{(t_i-t_{i-1})z_{ai}\ :\ i\in \mathcal T_a\}$ as a distribution of the ``mass'' of its job processing time over the time horizon $ T $. For $t$ the unique point where this distribution of mass is balanced, (half is distributed earlier than $t$ and half later), we schedule the job for arc $a$ at time $t^*_a = t-p_a/2$. Details of the method are as follows.

Consider an arc in $A_1$, let $i=\min\{i'\ :\ i'\in \mathcal T_a\}$ be the first interval that can be affected by job $a$, and let
\[h=\min\left\{i'\in\mathcal T_a\ :\ \sum_{k=i}^{i'}(t_k - t_{k-1})z_{ak}>\frac{p_a}{2}\right\}.\]
We define the \emph{midpoint} of job $a$ (with respect to $\vect z$) to be the point $t_{\text{mid}}\in[t_{h-1},t_h]$ such that
\[\sum_{k=i}^{h-1}(t_k - t_{k-1})z_{ak}+(t_{\text{mid}}-t_{h-1})z_{ah}=\frac{p_{a}}{2},\]
and then we determine the start time $t^*_{a}$ such that this midpoint exactly halves the processing period of job $a$, i.e., $t^*_{a}=t_{\text{mid}}-p_{a}/2$.

\section{Computational experiments}\label{sec:computations}

In this section we computationally evaluate, on a large set of instances, the performance of the exact formulation CTIP, the upper bounding formulations TDIP(RD) and TDIP(TI), and the lower bounding formulation TDIP(TI)$^B$, together with the two LP based heuristics: the Centre of Mass (CoM) heuristic and the Projection Heuristic (Proj). We first describe the  test data set,  followed by the description of the performance measures used, before presenting the computational results.

\subsection{Test instances}
For our computational study, we use a subset of the randomly generated test instances from~\cite{boland2012scheduling}. 
For each network, we consider the ten instances where all jobs have a time window in the range $[25,35]$ (the second, harder, instance set in~\cite{boland2012scheduling}). These instances all have a time horizon of $T=1000$. Some instance parameters and upper bound MIP dimensions are given in Table~\ref{tab:instances}. 
We introduce storage in these instances by selecting one ``central'' node in each network to be a storage node. We tested a wide range of storage node capacities for that node. These tests showed that, for smaller networks, for values higher than about $c_2=20$, ($c_2$ is a key parameter for arc capacities in the random generator), we found that the instances became very easy: 
in relatively short computing time (within a couple of minutes) the TDIP(TI)$^B$ model could be solved to give a feasible solution with the same value as the TDIP(RD) LP relaxation. Here we report results for all instances with storage capacities of $5$, $10$, $15$ and $20$, making a total of $8 \times 10\times 4 = 320$ instances tested.

We also investigate the performance of the formulations and heuristics on the two instances derived from a problem arising in the scheduling of maintenance for a coal supply chain, the Hunter Valley Coal  Chain (HVCC), studied in~\cite{boland2012scheduling}. The HVCC is the world's largest coal export operation, handling coal mined in the Hunter Valley region of New South Wales, Australia. A record 150.5 million tonnes of coal was exported by the HVCC to customers around the world in the year 2013, with more than 1400 coal vessels per year served by the Port of Newcastle, located at the mouth of the Hunter River. There are three coal handling terminals at the port, for the storage of coal prior to loading onto vessels. Coal from around 35 coal mines is transferred to the coal handling terminals via a network of rail tracks spread over 450 km, using approximately 22,000 train trips per year.  Each terminal has its storage space divided into stockpads on which coal stockpiles are assembled, with four stockpads each in Terminals 1 and 2, and with Terminal 3 modelled as a single stockpad. Coal is modelled as flowing through a network representing the combination of the rail network and the network of handling and conveying equipment at each terminal, with each stockpad modelled as a node with storage. The storage capacities for five of the stockpads is approximately 150 kilotonnes and for the remaining four is approximately 700 kilotonnes.

The components of this supply chain, such as rail tracks, conveyor belts, and stacking, reclaiming and shiploading machinery, all require regular preventive maintenance. However each maintenance plan has some negative impact on the throughput of network, as a component becomes unavailable for use during maintenance. Thus to meet increasing demand for coal, it is crucial to find maintenance schedules that minimize the impact of maintenance on the throughput of the system. MIP-based heuristics that consider only a sparse subset of possible maintenance job start times are used in~\cite{boland2012mixed} to obtain practical schedules, tested on 2010 and 2011 HVCC annual maintenance schedules. Seeking to combat the challenge posed by the very large number of possible job start times, matheuristics are developed and tested on two instances of the problem we consider here, derived from the same 2010 and 2011 HVCC schedules (\cite{boland2012scheduling}).  However these matheuristics exploit the decomposable structure of the network flow problem that occurs if storage is ignored: the methods in~\cite{boland2012scheduling} cannot be applied to the problem with storage and the instances tested in~\cite{boland2012scheduling} disallow storage at the stockpad nodes. As demonstrated in Section~\ref{subsec:example}, the optimal schedule for an instance of the problem obtained by ignoring storage can be very far from the optimal schedule for the instance when storage is considered. Here we analyze the performance of the formulations and heuristics described in this paper on the two instances used in~\cite{boland2012scheduling}, but considering storage at the stockpads, with their capacities set to the values given above.  The time horizon for both instances is  T = 365$\times$24 = 8760 (with a discretization of 1 hour for one year). The 2010 and 2011 instances respectively contain 1457 and 1234 jobs. Every job has a time window
of two weeks and a processing time between an hour and several days.
The instance and upper bound MIP dimensions for these two instances are given in Table~\ref{tab:HVCCC_instances}.

\begin{table}
     \centering
  \scalebox{0.7}{
\renewcommand{\arraystretch}{1.2}
    \begin{tabular}{ccS[table-format=3.0]S[table-format=4.1]S[table-format=3.1]S[table-format=6.1]S[table-format=6.1]S[table-format=7.1]S[table-format=5.1]S[table-format=6.1]S[table-format=6.1]S[table-format=7.1]S[table-format=5.1]}
    \toprule
    Network & \multicolumn{4}{c}{Dimensions} & \multicolumn{4}{c}{TDIP(RD) Size} & \multicolumn{4}{c}{TDIP(TI) Size} \\
    \cmidrule( r){1-1}\cmidrule(lr){2-5}\cmidrule(lr){6-9}\cmidrule(l ){10-13}
          & Nodes & {Arcs}  & {Jobs}  & {$\lvert \mathcal D \rvert$} & {\# Rows} & {\# Columns} & {\# Nonzeros}
          & {\# Binaries} & {\# Rows} & {\# Columns} & {\# Nonzeros} & {\# Binaries} \\ 
\cmidrule(lr){2-5}\cmidrule(lr){6-9}\cmidrule(l ){10-13}
    1     & 12    & 32    & 303.2 	& 456.3 & 25943.4 	& 22871.5 	 	&155200.2 	&5883.6 	&62288.0 & 51149.5 & 682573.4 & 9241.9 \\
    2      & 16    &44    & 421.0   	& 568.7 & 43740.3 	& 39859.6 		& 287528.7 	& 11297.9 	&87449.3 & 71838.5 & 951618.6 & 12824.5 \\
    3     & 18    & 57    & 542.4 	& 658.8 & 63451.8 	& 60600.3 	 	&444662.4 	& 18709.0 	&112601.1 & 94005.9 & 1239069.5 & 16528.1 \\
    4     & 27    & 90    & 847.5 	& 812.2 & 115272.9 	& 117067.3 		& 849284.5 	& 42105.8 	&176571.2 & 148011.7 & 1941644.0 & 26001.1 \\
    5     & 36    & 123   & 1155.9 	& 897.5 & 214480.0 	& 180212.0 	 	&2172605.0 	& 31506.4 	&239884.6 & 201350.0 & 2647223.0 & 35258.8 \\
    6     & 32    & 92    & 873.8 	& 818.3 & 149522.0 	& 122561.0 	 	&1389731.0 	& 21681.1 	&183799.3 & 150383.2 & 1993952.6 & 26643.1 \\
    7     & 48    & 176   & 1657.0  & 963.7 & 327382.0 	& 275677.0 		& 3524593.0 & 48517.6 	&340652.3 & 286671.7 & 3782451.1 & 50491.7 \\
    8     & 64    & 240   & 2268.2 	& 987.5 & 459361.0 	& 387151.0 	 	&5064772.0 	& 68177.5 	&465869.9 & 392573.7 & 5194776.4 & 69172.4 \\
    \bottomrule
    \end{tabular}}
\caption{Sizes of the random networks and average(arithmetic mean) problem sizes for TDIP(RD) and TDIP(TI) after presolve. Each row summarizes statistics for ten instances, each with identical network and arc capacities, but different, randomly generated, jobs.}\label{tab:instances}
\end{table}

\begin{table}
  \centering
  \scalebox{0.74}{
\renewcommand{\arraystretch}{1.2}
    \begin{tabular}{ccccccccccccc}
    \toprule
    Year  & \multicolumn{4}{c}{Dimensions} & \multicolumn{4}{c}{TDIP(RD) Size} & \multicolumn{4}{c}{TDIP(TI) Size} \\
    \cmidrule( r){1-1}\cmidrule(lr){2-5}\cmidrule(lr){6-9}\cmidrule(l ){10-13}
          & Nodes & Arcs  & Jobs  & $\lvert \mathcal D \rvert$ & \# Rows & \# Columns & \# Nonzeros
          & \# Binaries & \# Rows & \# Columns & \# Nonzeros & \# Binaries \\ \cmidrule(lr){2-5}\cmidrule(lr){6-9}\cmidrule(l ){10-13}
    2010  & 109   & 176   & 1458  & 1990  & 293208 & 278451 & 2040868 & 54105 & 1125678 & 1554161 & 10429886 & 502577 \\
    2011  & 109   & 176   & 1235  & 1709  & 251706 & 235847 & 1653790 & 46284 & 1108575 & 1521901 & 10099289 & 493102 \\
    \bottomrule
    \end{tabular}}
 \caption{Sizes of the HVCCC network and average problem sizes for TDIP(RD) and TDIP(TI) after presolve.}\label{tab:HVCCC_instances}
\end{table}

\subsection{Performance Measures}
To evaluate the quality of an upper bound for an instance $I$, we use the percentage gap between the upper bound and the best known lower bound for $I$, i.e. $ (\varphi_I - \varsigma ^*_I)/\varsigma ^*_I \times 100$ as a performance measure, where $\varphi_I$ is the value of the upper bound  and $\varsigma ^*_I$ is the value of the best lower bound  for  $I$.

Similarly, to evaluate the quality of a lower bound for an instance $I$, we use the percentage gap between the  best known upper bound for $I$ and the lower bound i.e. $(\varphi^*_I - \varsigma_I)/\varsigma_I \times 100$ as a performance measure, where $\varphi^*_I$ is the value of the best known upper bound  for $I$ and $\varsigma_I$ is the value of the lower bound for $I$.

In order to compare the quality of different upper (lower) bounds we use performance profiles, in which,  for each upper (lower) bound and a value $g$ on the horizontal axis, we plot the percentage of instances that have the percentage gap to the best lower (upper) bound less than or equal to $g$\%.

\subsection{Experimental Framework}\label{sec:exp_framework}
The MILP formulations and heuristics are implemented in C++ and run on a Dell PowerEdge R710 with dual hex core 3.06GHz Intel Xeon X5675 processors and 96GB RAM running Red Hat Enterprise Linux 6.  IBM ILOG CPLEX v12.5 is used in deterministic mode with a single thread.  For each  formulation we investigated the performance of different root algorithms,  i.e. algorithms to solve the linear programming relaxation of the problem, provided in CPLEX. The primal simplex method performed the best overall and hence was  used as the root algorithm for each formulation. For all formulations a time limit of 120 minutes (7200 seconds) to solve each randomly generated instance was imposed. For the much larger HVCC instances, we allowed twice as much time: the time limit was set to 240 minutes (14400 seconds). Each formulation was given an initial feasible solution in which a job $j$ starts at time given by $\lfloor (r_j +d_j -p_j)/2 \rfloor$. All other CPLEX parameters were set to their default values.

In our computational study, we compare the quality of upper bounds given by CTIP, TDIP(RD) and TDIP(TI) formulations. We extract two upper bounds from each: (i) the LP relaxation value, denoted by  LP-CTIP, LP-TDIP(RD) and LP-TDIP(TI) respectively, and (ii) the best bound given by the model at the end of the time limit, UB-CTIP, UB-TDIP(RD) and UB-TDIP(TI).

We also compare the quality of the lower bounds provided by CTIP and TDIP(TI)$^B$  at the end of the time limit, denoted by LB-CTIP and LB1 respectively. In addition to these, we also consider the following lower bounds for the computational analysis.
\begin{itemize}
\item CoM (Proj): the value of the feasible solution obtained by applying the CoM (Proj) heuristic to the LP-relaxation solution of TDIP(RD)
\item CoM-LP$\tau$ (Proj-LP$\tau$): the value of the feasible solution obtained by applying the CoM (Proj) heuristic to the LP relaxation solution at the active node with best upper bound found after solving TDIP(RD) for $\tau$ seconds.
\item CoM-FS$\tau$ (Proj-FS$\tau$): the value of the feasible solution obtained by applying the CoM (Proj) heuristic to the best feasible solution found by TDIP(RD) within $\tau$ seconds.
\item Max of All: the best feasible solution generated by any of the above heuristics.
\end{itemize}
For the randomly generated instances we use $\tau = 300$, but for the much larger HVCC instances, we use $\tau=1800$.

We note here that throughout the next section all averages are taken to be geometric means (unless otherwise stated). Since percentage gaps in bounds can be zero, we use the {\em shifted} geometric mean with a shift of 1: the shifted geometric mean  of values $x_1,x_2,\dots,x_n$ with shift $s$ is defined as $\left(\prod_{i=1}^n(x_i+s)\right)^{1/n}-s$.

\subsection{Results}

We begin by comparing the quality of bounds given by  CTIP and TDIP formulation on a modified smaller subset of the test data set, in which, for each instance in the original data set with storage capacity 5 (ten instances for each network giving a total of 80 instances), we discard all jobs with deadline greater than 60 and take the time horizon to be $T=60$. The average number of jobs over the resulting ten instances for each network are shown in the second column of Table~\ref{tab:comp_CTIP}. The quality of bounds produced on these modified instances can be compared by observing Figures~\ref{fig:lb_compare_ctip} and~\ref{fig:ub_compare_ctip}, with summary statistics given in Table~\ref{tab:comp_CTIP}. Run time statistics are given in Table~\ref{tab:runtimes_modifieddata}. We first explain what is shown in each figure or table, and then summarize our findings from these results.

Figure~\ref{fig:lb_compare_ctip} gives a performance profile for the percentage gap of each of the CTIP and TDIP model lower bounds (LB-CTIP and LB1 respectively). It also gives a dot plot, with two columns of dots for each network, one column for each for the CTIP and TDIP lower bounds. There are ten dots in each column, each plotted with $y$-axis value given by the percentage gap of the lower bound produced by that column's model on the corresponding instance. Each column also includes a box, plotted at a $y$-axis value given by the (shifted) geometric mean of the percentage gap produced by the model for that column over the ten instances represented. Figure \ref{fig:ub_compare_ctip} gives two plots with three performance profiles in each. The first provides the percentage gaps of each of the LP relaxation values of the CTIP, TDIP(RD) and TDIP(TI) formulations; the second provides the percentage gaps that were found by solving each of the CTIP, TDIP(RD) and TDIP(TI) MIPs with the given run time limit. Figure~\ref{fig:ub_compare_ctip} includes a dot plot similar to that in Figure~\ref{fig:lb_compare_ctip}, but with three columns per network rather than two, one for each of the upper bounds UB-CTIP, UB-TDIP(RD) and UB-TDIP(TI).

Table~\ref{tab:comp_CTIP} provides summary statistics for the performance of the CTIP and TDIP model bounds. Its third and eighth columns report the number of instances, out of ten, for which best feasible solution found by the CTIP and TDIP(TI)$^B$ MIPs within the run time limit is known to be optimal (has value equal to the best upper bound found for the instance with any model, i.e. has zero percentage gap), respectively. The table's fourth and fifth column report information ``internal'' to the CTIP MIP solution process, showing the number of instances (out of ten) for which the CTIP model was able to prove optimality of its feasible solution within the time limit, and the average (across the ten instances) of the gap between its best upper and best lower bounds at the end of the time limit, reported as a percentage. The table's sixth column reports the average percentage gap of the CTIP lower bound (LB-CTIP, calculated using the best upper bound found by any model). Similarly the seventh column reports the average percentage gap of the CTIP upper bound (UB-CTIP). The ninth column shows the average percentage gap of the lower bound produced by the TDIP model (i.e., LB1, from TDIP(TI)$^B$). The tenth and eleventh columns respectively report the average percentage gap of the two TDIP upper bounds, UB-TDIP(RD) and UB-TDIP(TI). Recall that all averages are shifted geometric means and that percentage gaps are all calculated with respect to the best bound produced by any model, with the exception of those in the fifth column, which are as explained above.

Summary statistics for the run times of CTIP and TDIP models on the modified dataset are given in Table~\ref{tab:runtimes_modifieddata}. For each model, the minimum, maximum and average (geometic mean) run time, in seconds, across the ten instances for each network, are shown.

\begin{figure}[htbp]
 \begin{minipage}{.45\linewidth}
     \bigskip
      \bigskip
   \includegraphics[width=\textwidth]{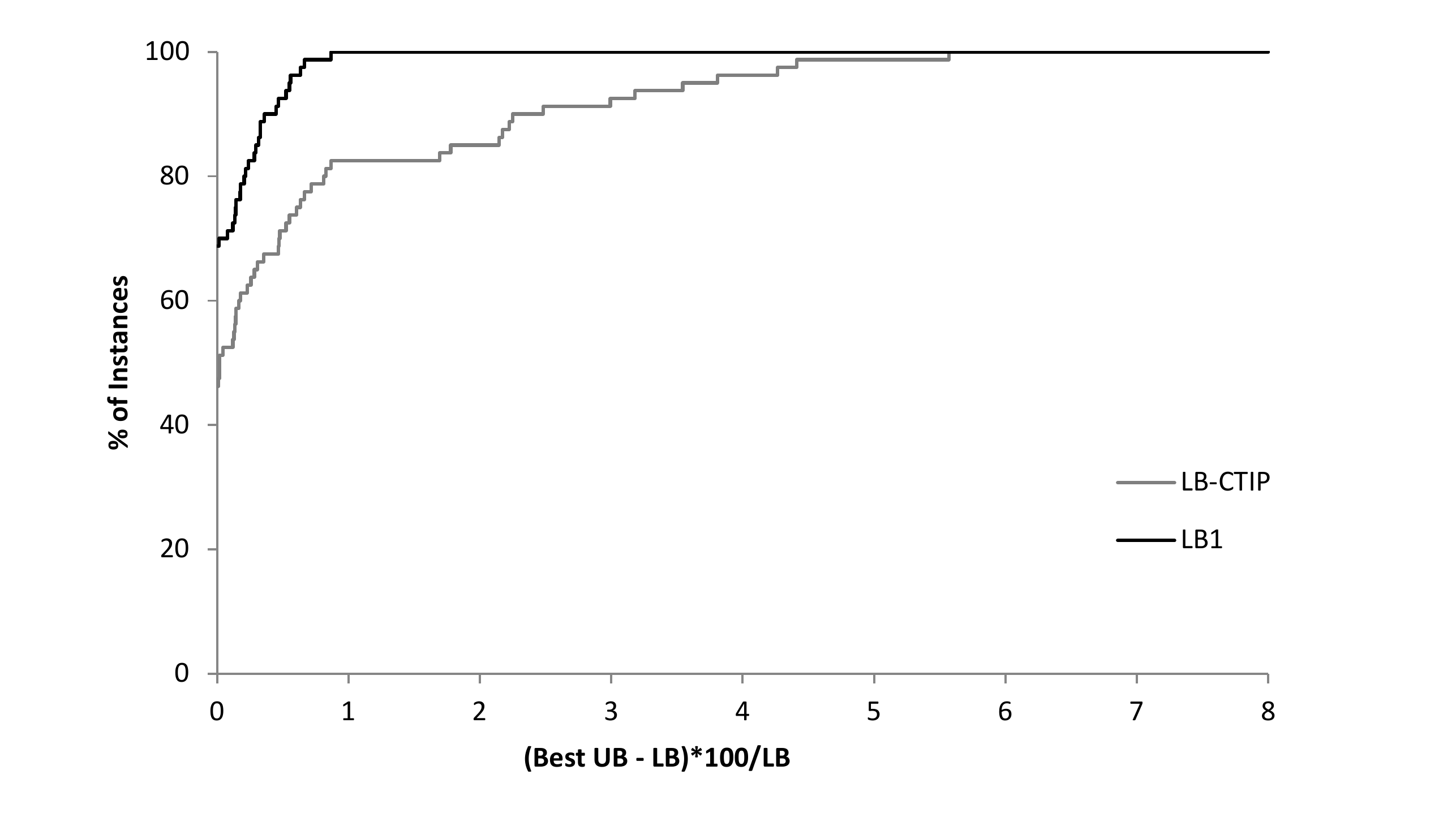}
     \end{minipage}\hfill
    \begin{minipage}{.45\linewidth}
   \bigskip
   \bigskip
  \includegraphics[width=\textwidth]{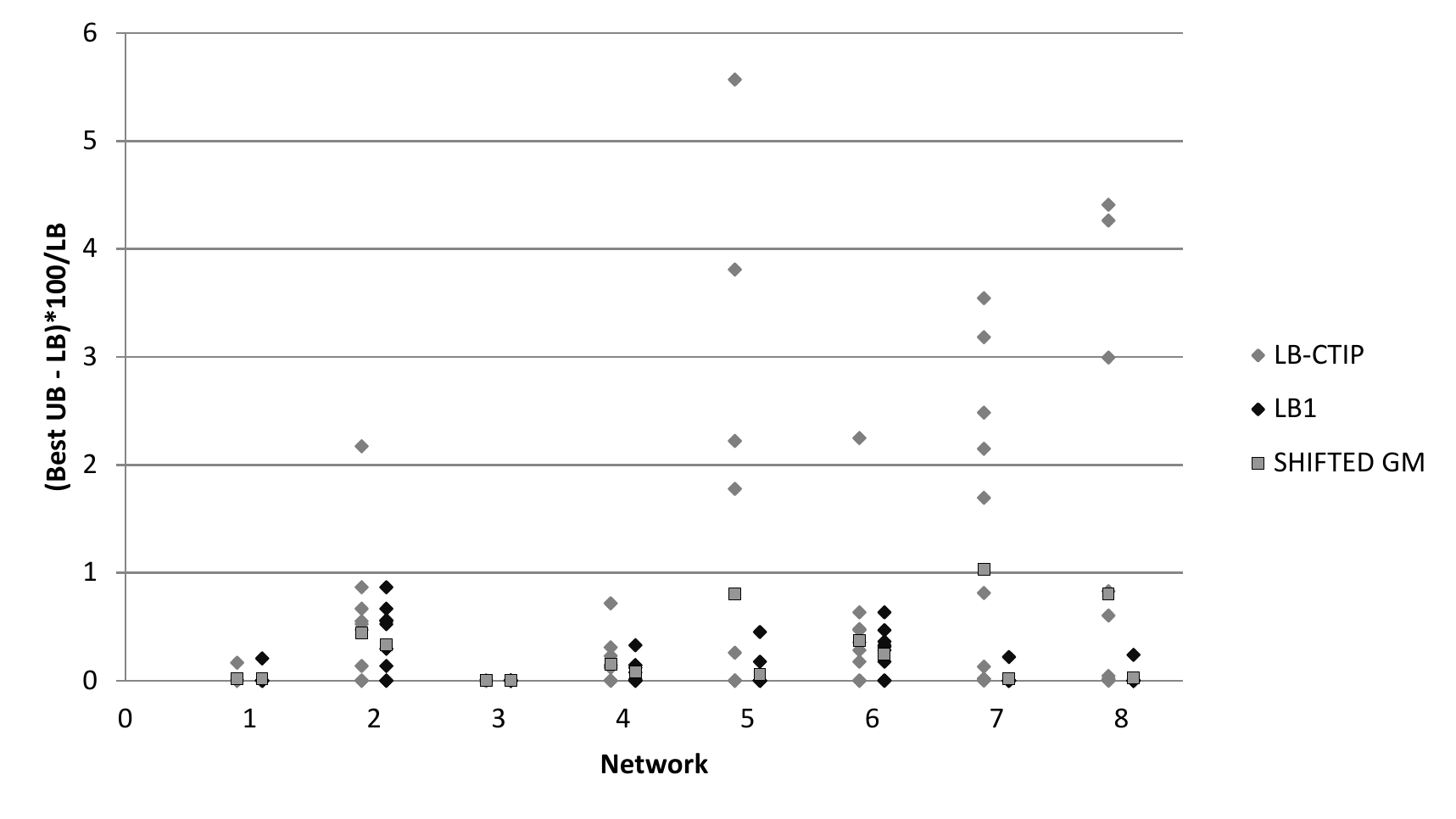}
    \end{minipage}
  \caption{Comparison of lower bounds on the modified data set.}\label{fig:lb_compare_ctip}
\end{figure}

\begin{figure}
   \centering
  \begin{minipage}{.45\linewidth}
      \bigskip
       \bigskip
    \includegraphics[width=\textwidth]{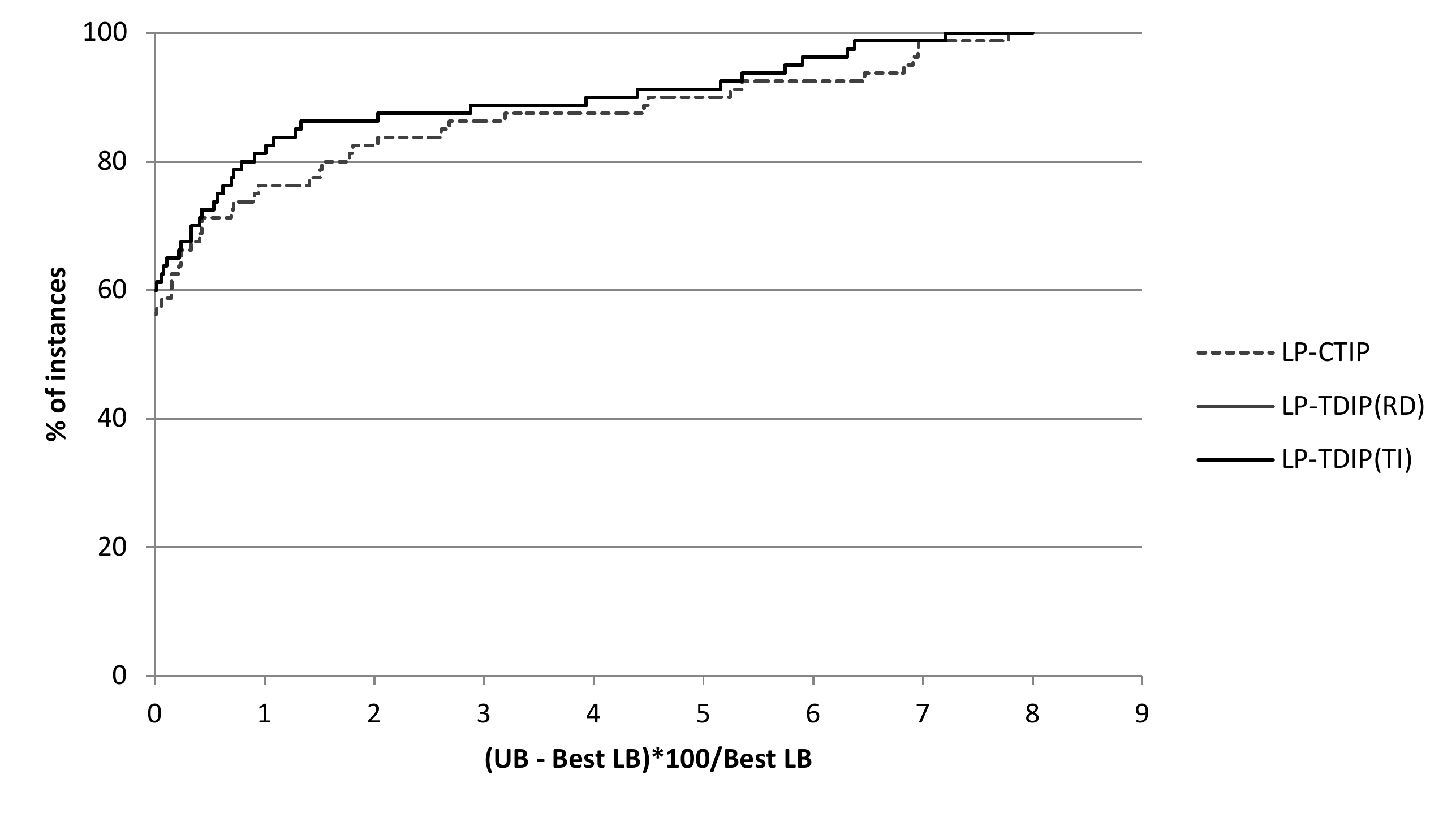}
      \end{minipage}\hfill
     \begin{minipage}{.45\linewidth}
    \bigskip
    \bigskip
   \includegraphics[width=\textwidth]{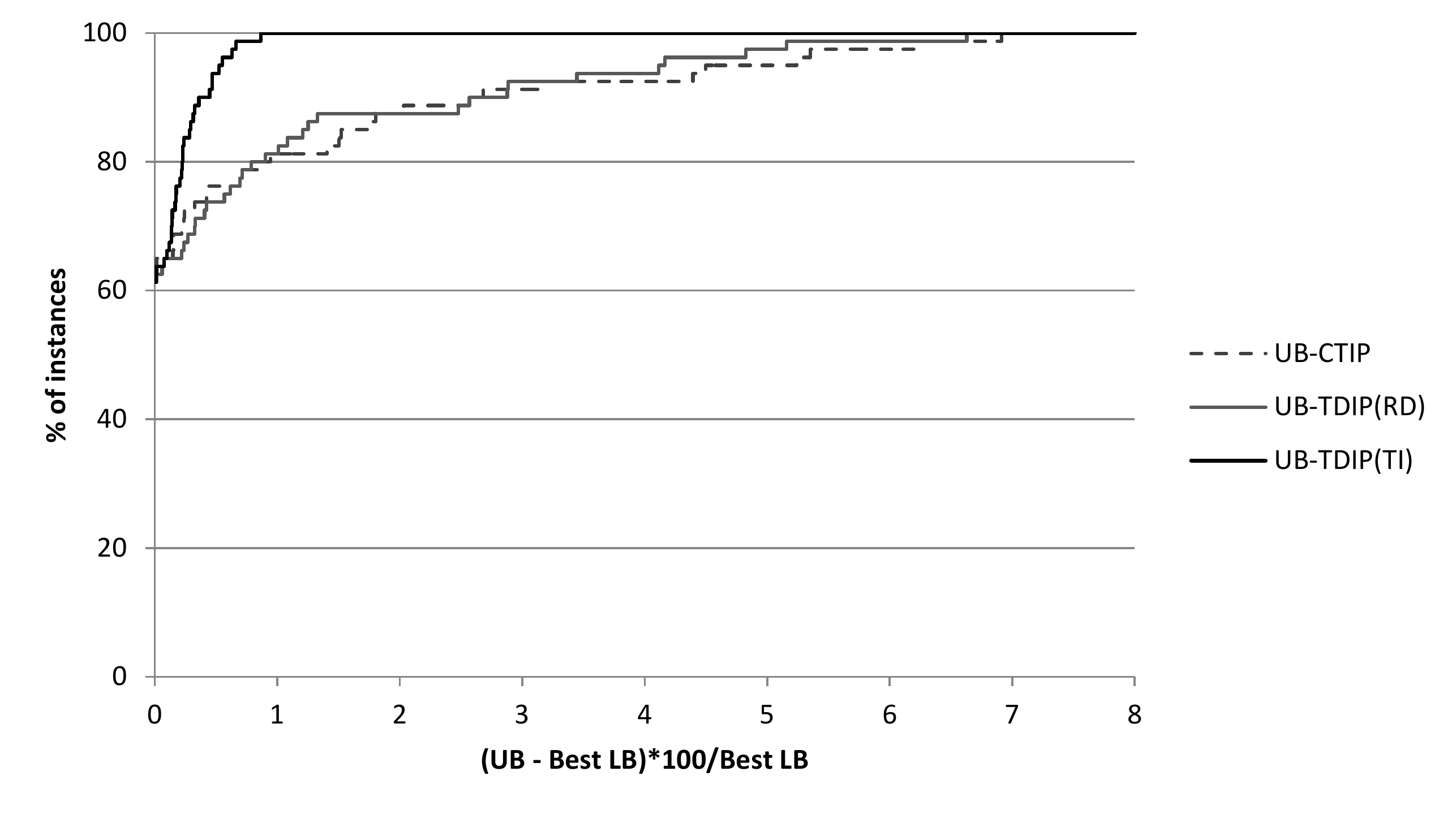}
     \end{minipage}
      \begin{minipage}{.45\linewidth}
         \bigskip
         \bigskip
        \includegraphics[width=\textwidth]{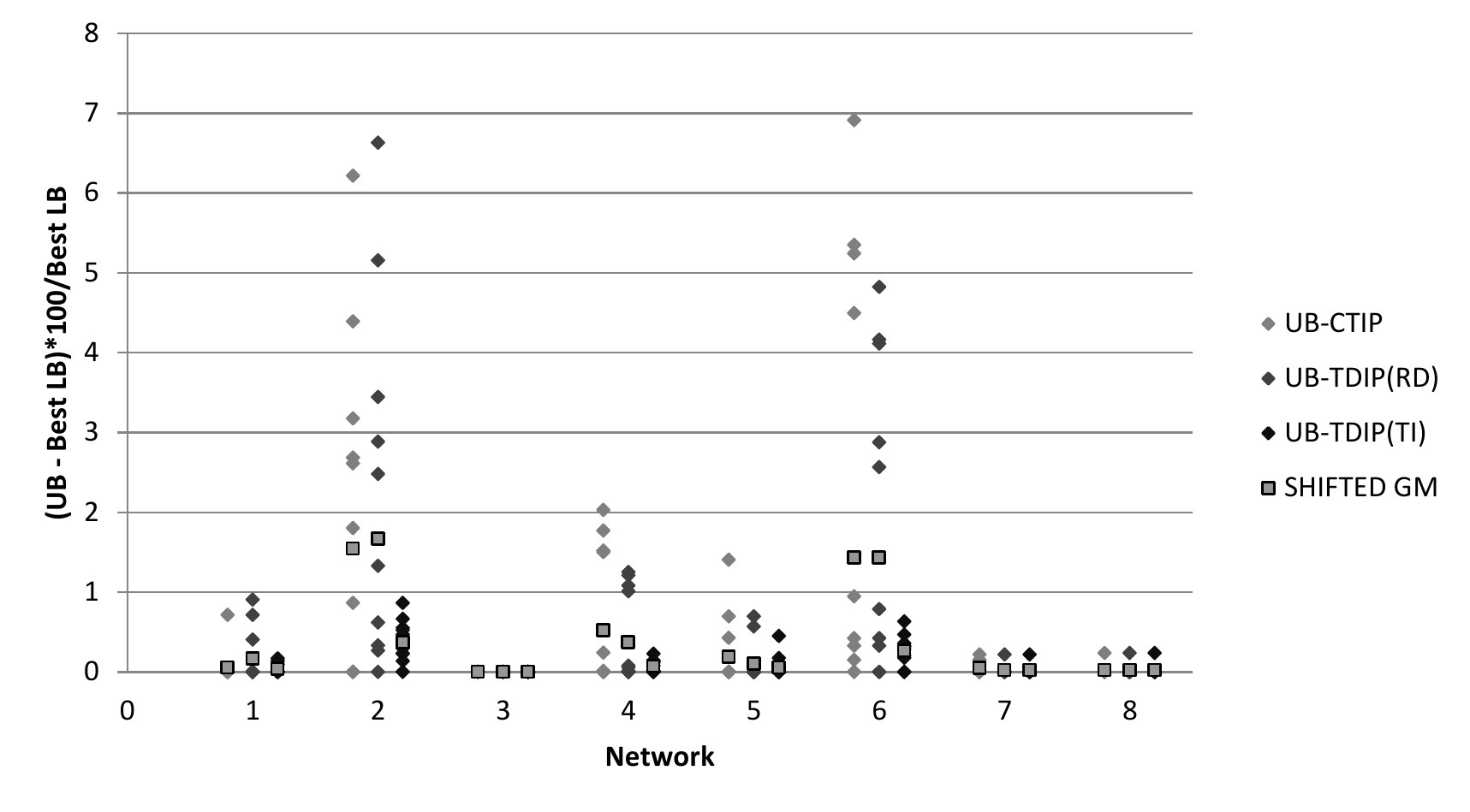}
          \end{minipage}
  \caption{Comparison of upper bounds on the modified data set.}\label{fig:ub_compare_ctip}
\end{figure}

\begin{sidewaystable}[htbp]
  \centering
\renewcommand{\arraystretch}{1.2}
{\small
     \begin{tabular}{ccccccccccc}
         \toprule
         \multicolumn{2}{c}{Network} & \multicolumn{5}{c}{CTIP} & \multicolumn{2}{c}{TDIP(TI)$^B$}   & \multicolumn{1}{c}{TDIP(RD)}  &  \multicolumn{1}{c}{TDIP(TI)}\\
\cmidrule( r){1-2}\cmidrule(lr){3-7}\cmidrule(lr){8-9}\cmidrule(lr){10-10}\cmidrule(l ){11-11}
        No.  			& Avg.  	 & \#   & \#  & \%  gap UB-CTIP & \%gap Best UB  &\%
        gap UB-CTIP & \#  & \% gap Best UB   	 & \% gap UB-TDIP  & \% gap UB-TDIP \\
	& \# Jobs & Opt	 & CTIP Opt & \& LB-CTIP & \& LB-CTIP & \& Best LB &  Opt & \& LB1 & (RD)
        \& Best LB & (TI)  \& Best LB  \\
\cmidrule( r){1-2}\cmidrule(lr){3-7}\cmidrule(lr){8-9}\cmidrule(lr){10-10}\cmidrule(l ){11-11}
     	1     & 7.93 & 9  & 9   & 0.06 &  0.02 & 0.06  & 9     & 0.02   & 0.16 & 0.04 \\
        2     & 11.89 & 3 & 3    & 1.68 &  0.44 & 1.54  & 3 & 0.33   & 1.66 & 0.37 \\
        3     & 12.08 & 10 & 10   & 0.00 &  0.00 & 0.00  & 10  & 0.00   & 0.00 & 0.00 \\
        4     & 20.93 & 4  & 4     & 0.63 &  0.15 &0.52  & 4     & 0.08   & 0.37 & 0.07 \\
        5     & 28.79 & 5  &5     &0.90 &  0.80 &0.19  & 8     & 0.05   & 0.10 & 0.05 \\
        6     & 20.71 & 3  &2     &1.62 &  0.37 &1.43  & 3     & 0.24   & 1.43 & 0.26 \\
        7     & 38.48 & 1  &1     & 1.05 &  1.03 & 0.05  & 9     & 0.02   & 0.02 & 0.02 \\
        8     & 56.14 & 2  &2     &0.80 &  0.80 &0.02  & 9     & 0.02  & 0.02 & 0.02 \\     \bottomrule
 \end{tabular}}
\caption{Comparison of CTIP with TDIP on the modified data set.}\label{tab:comp_CTIP}
\bigskip\bigskip

{\small
\begin{tabular}{@{}cS[table-format=1.2]S[table-format=1.2]S[table-format=1.2]S[table-format=3.2]S[table-format=4.2]S[table-format=4.2]S[table-format=2.2]S[table-format=1.2]S[table-format=1.2]S[table-format=1.2]S[table-format=1.2]S[table-format=1.2]S[table-format=1.2]S[table-format=2.2]S[table-format=4.2]S[table-format=1.2]S[table-format=1.2]S[table-format=4.2]@{}}
    \toprule
    Network & \multicolumn{6}{c}{CTIP} & \multicolumn{12}{c}{TDIP}\\
    \cmidrule(lr){2-7}\cmidrule(l ){8-19}
    No. & \multicolumn{3}{c}{LP-CTIP} & \multicolumn{3}{c}{CTIP} & \multicolumn{3}{c}{TDIP(RD)} & \multicolumn{3}{c}{LP-TDIP(TI)}       & \multicolumn{3}{c}{TDIP(TI)}        & \multicolumn{3}{c}{TDIP(TI)$^B$} \\
    \cmidrule( r){1-1}\cmidrule(lr){2-4}\cmidrule(lr){5-7}\cmidrule(lr){8-10}\cmidrule(lr){11-13}\cmidrule(lr){14-16}\cmidrule(lr){17-19}
          & {min}  & {avg}   & {max}   & {min}   & {avg}   & {max}   & {min}   & {avg}   & {max}   & {min}   & {avg}   &
          {max}   & {min}   & {avg}   & {max}   & {min}   & {avg}   & {max} \\
\cmidrule(lr){2-4}\cmidrule(lr){5-7}\cmidrule(lr){8-10}\cmidrule(lr){11-13}\cmidrule(lr){14-16}\cmidrule(lr){17-19}
    1     & 0.01  & 0.01  & 0.02  & 0.03  & 3.45  & 7200  & 0.01  & 0.01  & 0.02  & 0.01  & 0.02  & 0.03  & 0.06  & 0.35  & 33.51 & 0.06  & 0.20  & 3.68 \\
    2     & 0.03  & 0.21  & 0.73  & 0.11  & 1061.01 & 7200  & 0.01  & 0.03  & 0.05  & 0.02  & 0.05  & 0.10  & 0.07  & 3.46  & 88.44 & 0.14  & 1.00  & 11.78 \\
    3     & 0.03  & 0.06  & 0.20  & 0.07  & 3.40  & 112.92 & 0.01  & 0.02  & 0.03  & 0.02  & 0.03  & 0.06  & 0.09  & 0.13  & 0.28  & 0.09  & 0.14  & 0.23 \\
    4     & 0.11  & 0.19  & 0.83  & 51.69 & 2078.91 & 7200  & 0.03  & 0.06  & 0.23  & 0.05  & 0.08  & 0.28  & 0.17  & 4.26  & 1979.58 & 0.19  & 3.01  & 1136.56 \\
    5     & 0.33  & 0.63  & 3.46  & 0.59  & 1937.75 & 7200  & 0.04  & 0.08  & 0.24  & 0.08  & 0.13  & 0.23  & 0.36  & 4.92  & 688.24 & 0.37  & 3.22  & 132.61 \\
    6     & 0.18  & 0.41  & 1.90  & 104.74 & 4319.72 & 7200  & 0.03  & 0.10  & 0.41  & 0.03  & 0.11  & 0.24  & 0.15  & 17.12 & 1798.90 & 0.19  & 5.57  & 70.86 \\
    7     & 0.29  & 0.94  & 4.09  & 1.97  & 3171.03 & 7200  & 0.09  & 0.13  & 0.31  & 0.10  & 0.15  & 0.45  & 0.34  & 2.79  & 7200  & 0.38  & 2.07  & 448.22 \\
    8     & 0.92  & 2.25  & 5.98  & 2.56  & 1472.38 & 7200  & 0.15  & 0.28  & 1.29  & 0.14  & 0.25  & 0.59  & 0.45  & 2.64  & 7200  & 0.55  & 2.54  & 759.88 \\
    \bottomrule
    \end{tabular}}
\caption{Run time statistics of MIPs on the modified data set, reported in
  seconds.}\label{tab:runtimes_modifieddata}  
\end{sidewaystable}%

Our first observation from these results is that the CTIP MIP model struggles to solve to optimality. Even on these very small modified instances, it times out on 44 of the 80 instances. By comparison, on these instances, almost none of the TDIP formulations reached the run time limit: the only exception was the TDIP(TI) formulation, which timed out on 2 of the 20 instances for Networks 7 and 8.

In terms of the quality of lower bounds produced, the CTIP model was clearly outperformed by the TDIP(TI)$^B$ model. This can be immediately observed by comparing their performance profiles in Figure \ref{fig:lb_compare_ctip}, in which the profile for LB1 remains well above and to the left of that for LB-CTIP, until they converge at a percentage gap of around 6\%. For all instances LB1 is within 2\% of the best upper bound whereas LB-CTIP is within 2\% for only 80\% of instances. In the dot plot, we see the second column (for LB1) is typically much shorter, with the first column (for LB-CTIP) showing a spread with several instances' dots plotted higher than the height of the second column. From Table~\ref{tab:comp_CTIP}, we see that only 46.25\% instances are solved to optimality by CTIP, i.e LB-CTIP is optimal, versus 68.75\% for LB1. For every network the average percentage gap of LB1 is smaller than that of LB-CTIP, with the relative difference between the gaps increasing with increasing size of the instances. From Table~\ref{tab:runtimes_modifieddata} we see that run times for TDIP(TI)$^B$ are also usually substantially shorter (in some cases by several orders of magnitude) than those for CTIP.

In regard to upper bounds, the TDIP models also offer a much better trade-off for quality versus run time when compared to the CTIP model. From the first performance profile plot in Figure \ref{fig:ub_compare_ctip}, we see that the quality of upper bound from the CTIP model's LP relaxation is similar to that of TDIP(TI), but as can be seen from Table~\ref{tab:runtimes_modifieddata}, the run times for the latter are generally lower, and more so for the larger instances. From the second performance profile plot in Figure \ref{fig:ub_compare_ctip}, we see that the quality of upper bound from the CTIP MIP is similar to that of TDIP(RD) (and noticeably worse than that of TDIP(TI)), but as can be seen from Table~\ref{tab:runtimes_modifieddata}, the run times for the TDIP(RD) (and TDIP(TI)) MIPs are generally lower, by several orders of magnitude in the case of TDIP(RD).

We conclude that the CTIP model is primarily of theoretical value, allowing us to provide insights about the nature of optimal solutions but struggling to solve problems in practice. From the experiments on instances that are much smaller than those in the original data set, it seems very unlikely that the CTIP model could perform well on realistic sized instances, and appears to be outperformed in all respects by the TDIP models. Thus in what follows do not report results for the CTIP model on the full sized and real world instances; we now focus on the performance of the TDIP models and the heuristics on these instances.

Figure \ref{fig:gap_from_BUB_heuristics} provides performance profiles for percentage gaps of the lower bounds, with two plots: one for the instances with Networks 1-4, which are markedly easier, and the other for the instances with Networks 5-8. Summary statistics for the performance of the lower bounds on the randomly generated instances are given in Table~\ref{tab:gaps_LB}, which shows the minimum, maximum and average (shifted geometric mean) percentage gaps of each heuristic, over all ten instances with each network and each storage capacity level. The same statistics taken over all forty instances with each network are reported in bold font. Table~\ref{tab:gaps_UB} is similar, but shows statistics for the performance of the upper bounds. Percentage gaps of lower and upper bounds on the HVCC instances are shown Tables~\ref{tab:gaps_LB_HVCC} and~\ref{tab:gaps_UB_HVCC} respectively. Run time statistics are reported in Table~\ref{tab:run_times} for the randomly generated instances and in Table~\ref{tab:run_times_HVCC} for the HVCC instances. In the former case, the columns labelled ``\# time limit'' record the number of instances on which the corresponding model timed out, out of the number represented by the row statistics (ten or forty).

First, we observe that all TDIP formulations struggled to solve these instances to optimality. None of the TDIP(RD), TDIP(TI) or TDIP(TI)$^B$ formulations solved the HVCC instances to optimality within the time limit. As can be seen from Table~\ref{tab:run_times}, the formulations all reached the time limit for almost all randomly generated instances with networks other than Networks 1 and 3. For Network 3 all instances solved to optimality with all formulations, while for Network 1, TDIP(RD) solved to optimality for all instances, but TDIP(TI) only solved 17, while TDIP(TI)$^B$ could only solve 14. The TDIP(RD) formulation solved more instances to optimality than any other, but this was still only 91 instances out of 320. We note that instances seem to get more difficult as the storage capacity decreases, as well as with increasing network size and number of jobs.

Fortunately, even without solving to optimality, the TDIP(RD) and TDIP(TI) formulations provide quite good upper bounds. As can be seen from Table~\ref{tab:gaps_UB}, UB-TDIP(TI) appears to provide the best upper bounds overall on randomly generated instances, with average percentage gaps less than 2\% for all networks, and less than 2.3\% for all networks and storage capacity levels, except for those with Network 6, which has quite large gaps, averaging 6.87\%. For Network 6, UB-TDIP(RD) appears noticeably better, and is very close to UB-TDIP(TI) on the other networks. For the HVCC instances reported in Table~\ref{tab:run_times_HVCC}, UB-TDIP(TI) is best for the 2010 instance, but UB-TDIP(RD) is best for 2011. For most randomly generated instances, the LP relaxations actually provide upper bounds that are nearly as good as those given by the MIPs, and of course take much less time (as per Table~\ref{tab:run_times}). The exception is Network 2, for which there is a noticeable improvement in the quality of the upper bound resulting from solving the MIP rather than just its LP relaxation. Noticeable improvements are also observed for the HVCC instances, particularly the 2010 instance. From these results it is difficult to ``pick a winner'' between UB-TDIP(TI) and UB-TDIP(RD), but it does appear that UB-TDIP(RD) does relatively well in cases where $|\mathcal D|$ is smaller: the 2011 HVCC instance has $|\mathcal D|$ value quite a bit smaller than that for 2010, and Network 6 has the smallest $|\mathcal D|$ of all of the harder networks (5-8). One possibility is that when there are fewer unique job start and end times, the more compact TDIP(RD) formulation offers a better trade-off between MIP solvability and bound quality than does TDIP(TI).

The lower bounds show quite a bit more diversity than the upper bounds in terms of offering a trade-off of quality versus run time. It is clear that with long run times, LB1 (found by solving the MIP TDIP(TI)$^B$) is by a large margin the best lower bound on randomly generated instances: it gives feasible solutions that can be proved to be within 2.06\% of optimality for all instances with Networks 1-4 and gives feasible solutions within 5\% of optimality on more than 80\% of instances with Networks 5-8 (see Table~\ref{tab:gaps_LB} and Figure~\ref{fig:gap_from_BUB_heuristics}). No other lower bound is better than LB1 on any randomly generated instance. The situation on HVCC instances (Table~\ref{tab:gaps_LB_HVCC}) is somewhat different: although LB1 is very close to the best on the 2010 instance, it is very far from best on the 2011 instance. Here the CoM and Proj heuristic methods come into their own.

The performance of all variants of the CoM and Proj heuristics, as described in Section \ref{sec:exp_framework}, on the smaller randomly generated instances (those with Networks 1-4) are shown in Figure~\ref{fig:gap_from_BUB_heuristics}. From this plot we observe that the methods based on the initial LP relaxation solution of TDIP(RD) perform very poorly: although they are very fast to run, with only a small amount of extra computing time (5 minutes), very substantial improvements in the bound quality can be made. The same conclusion can be reached by comparing the columns for ``Max(CoM,Proj)'' with those for ``Max(Proj-LP300,Proj-FS300)'' and ``Max(CoM-LP,CoM-FS300)'' in Table~\ref{tab:gaps_LB}; the latter show far smaller percentage gaps. Thus in the plot for Networks 5-8 in Figure~\ref{fig:gap_from_BUB_heuristics}) we focus on the heuristics applied to either best upper bound LP solutions or best lower bound integer feasible solutions found after 5 minutes of computing time. Here it can be seen that the LP solutions seem to be a better basis for the heuristics than the integer feasible solutions, with both Proj-LP300 and CoM-LP300 profiles lying above the Proj-FS300 and CoM-FS300 profiles, with some margin. This can be explained by the fact that for these instances the first LP relaxation takes quite a long time to solve, (around 200 seconds on average), hence for most instances the MIP solver could not improve on the initial feasible solution provided for TDIP(RD) within the 300 second limit. We also note that the Proj heuristic seems slightly better than CoM: the difference is small when based on the integer feasible solution, but is noticeable when the heuristics are based on the LP solution. This can also be observed by comparing the columns for ``Max(Proj-LP300,Proj-FS300)'' with those of ``Max(CoM-LP,CoM-FS300)'' in Table~\ref{tab:gaps_LB}. The situation on HVCC instances is quite different. Here the heuristics based on the best integer feasible solutions at the (longer) 1800 second mark are actually better than those based on the LP solution giving the best upper bound at that time, by quite a large margin. Although the time limit at which to extract the lower bound relative to the size or difficulty of the instance is difficult to calibrate, this would suggest that the integer feasible solution offers an increasingly better basis for the heuristics as longer TDIP(RD) MIP run times are allowed.

Since the CoM and Proj heuristics are computationally very cheap to run, and one variant is not consistently better than another on all instances, we also consider running all variants after the TDIP(RD) MIP lower bound time limit is reached (``Max of All''). As can be seen from the performance profiles on the harder randomly generated instances (Networks 5-8) in Figure~\ref{fig:gap_from_BUB_heuristics}, and by comparing the columns for ``Max of All'' in Table~\ref{tab:gaps_LB} with those for the other heuristic variants, this gives noticeable improvements over the results of any one (or pair of) heuristics alone, at virtually no extra computational cost. Whilst the quality of the lower bound produced by ``Max of All'' is still nowhere near as good as LB1 on randomly generated instances, it comes at far less computational cost, and has the added advantage of ``controllability'' of the computational cost via the time at which the solutions on which to base the heuristics are extracted. Indeed, it gives the best quality lower bounds on the HVCC instances when these solutions are extracted at 1800 seconds.


\begin{figure}[htb]
\centering
  \begin{minipage}{.48\linewidth}
\includegraphics[width=\textwidth]{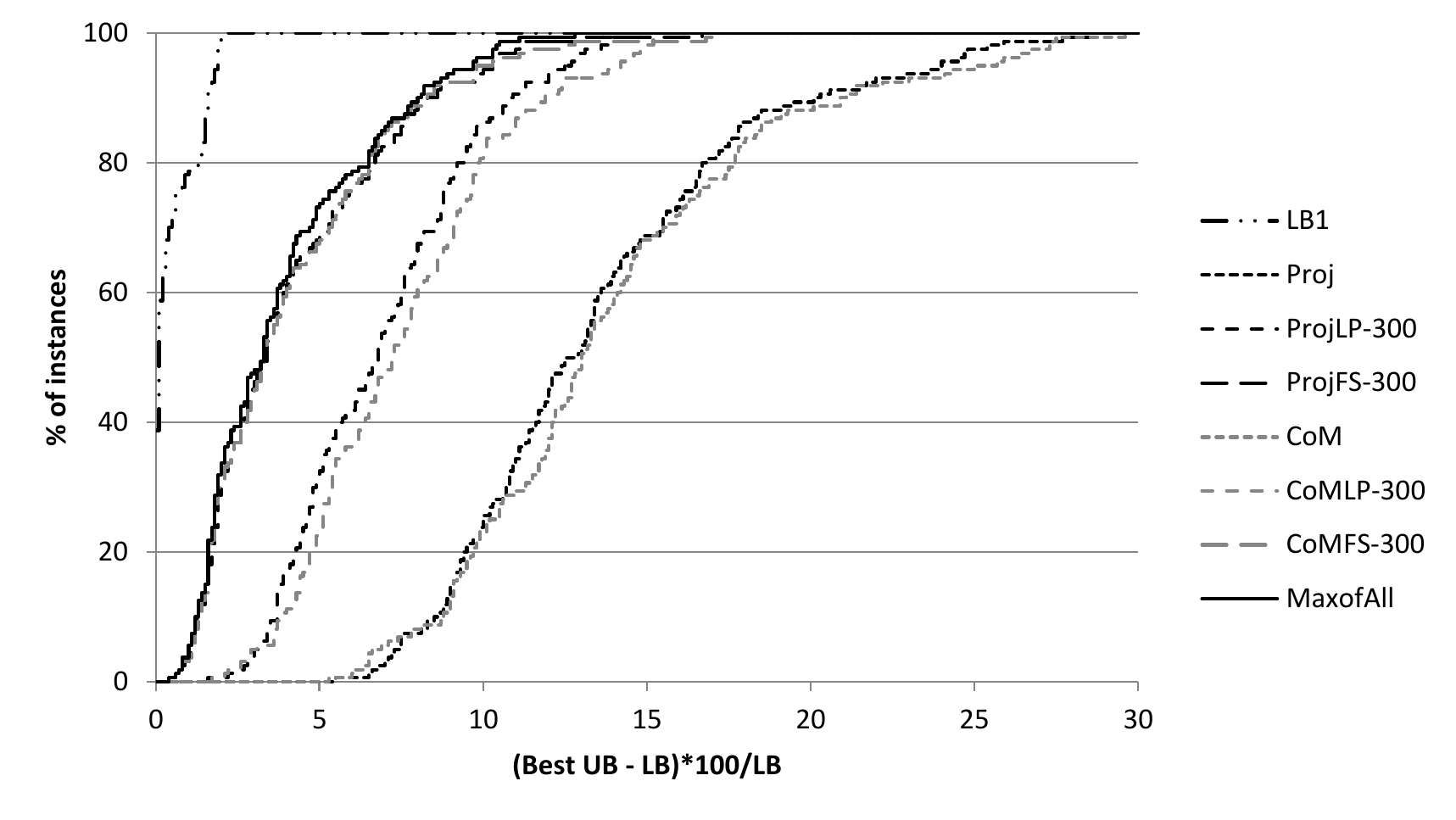}
\caption*{Networks 1-4}
  \end{minipage}\hfill
  \begin{minipage}{.48\linewidth}
\includegraphics[width=\textwidth]{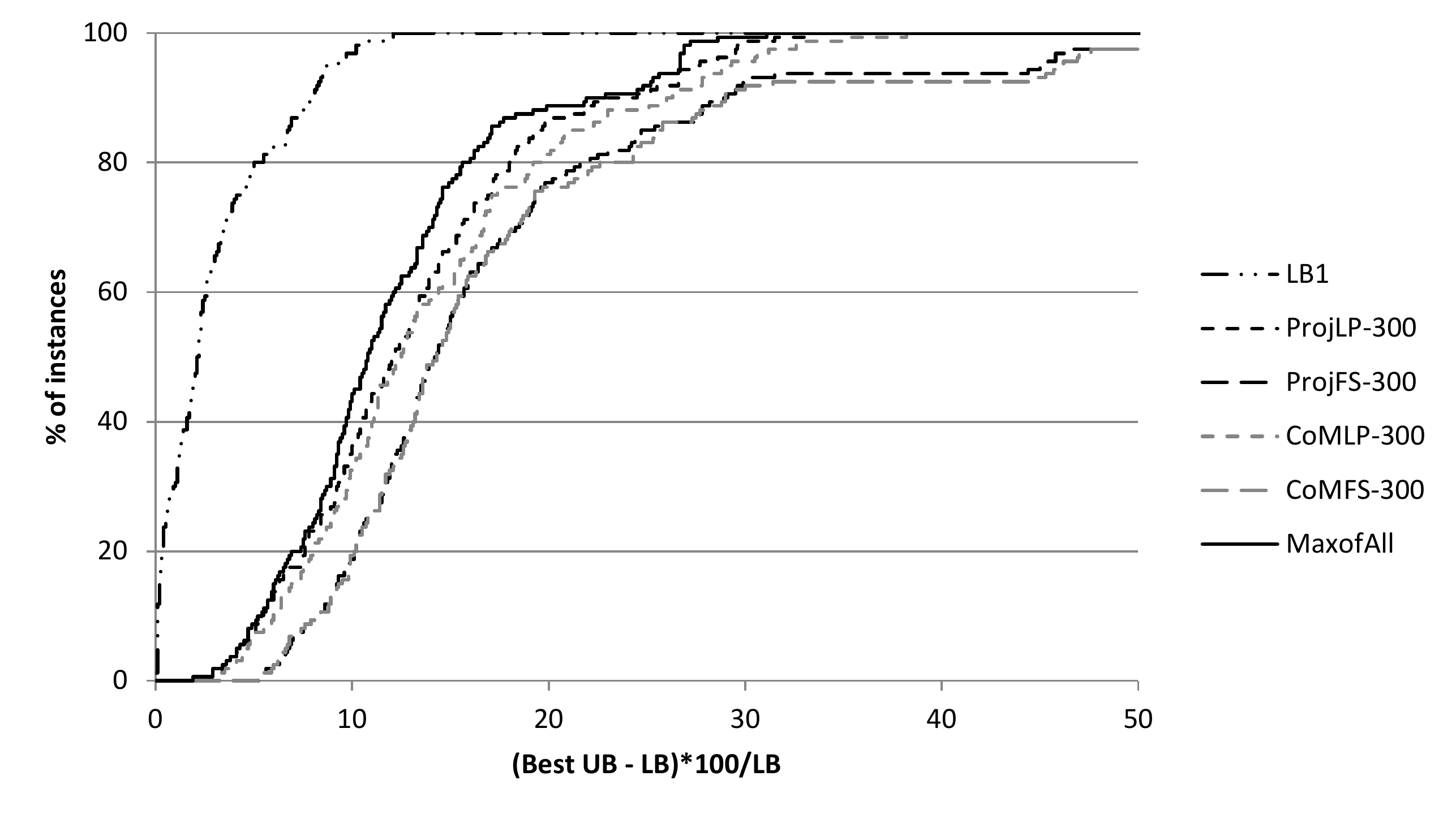}
\caption*{Networks 5-8}
  \end{minipage}
  \caption{Performance profiles for lower bounds on randomly generated instances.}
  \label{fig:gap_from_BUB_heuristics}
\end{figure}


\begin{table}
	\centering
    \centering
        \scalebox{0.72}{       
   \begin{tabular}{@{}ccccccccccccccccc@{}}
        \toprule
        Network & Storage & \multicolumn{3}{c}{\qquad \%gap Best UB  \& \qquad} & \multicolumn{3}{c}{\qquad \%gap Best UB  \& \qquad} & \multicolumn{3}{c}{\qquad \%gap Best UB \& \qquad } & \multicolumn{3}{c}{\qquad\%gap Best UB \& \qquad} & \multicolumn{3}{c}{\qquad \%gap Best UB\ \ \& } \\
       	& Capacity  & \multicolumn{3}{c}{ \quad LB1   } & \multicolumn{3}{c}{ \quad Max(CoM,Proj)   } & \multicolumn{3}{c}{ Max(CoM-LP300,   } &\multicolumn{3}{c}{ Max(Proj-LP300,   } &\multicolumn{3}{c}{ Max of All  }\\
       	& & \multicolumn{3}{c}{  } &\multicolumn{3}{c}{  } &\multicolumn{3}{c}{CoM-FS300)}
        &\multicolumn{3}{c}{Proj-FS300)}&\multicolumn{3}{c}{  } \\ 
\cmidrule( r){1-2}\cmidrule(lr){3-5}\cmidrule(lr){6-8}\cmidrule(lr){9-11}\cmidrule(lr){12-14}\cmidrule(lr){15-17}
          &       & min   & avg   & max   & min   & avg   & max   & min   & avg   & max   & min   &
          avg   & max   & min   & avg   & max \\
\cmidrule(lr){3-5}\cmidrule(lr){6-8}\cmidrule(lr){9-11}\cmidrule(lr){12-14}\cmidrule(lr){15-17}
    1     & 5     & 0.00  & 0.08  & 0.27  & 5.93  & 11.44 & 16.28 & 1.64  & 3.15  & 5.71  & 1.61  & 3.14  & 5.90  & 1.61  & 3.06  & 5.71 \\
          & 10    & 0.00  & 0.04  & 0.23  & 6.48  & 12.14 & 17.41 & 1.56  & 2.97  & 5.25  & 1.62  & 3.13  & 5.33  & 1.56  & 2.96  & 5.25 \\
          & 15    & 0.00  & 0.02  & 0.11  & 5.24  & 11.63 & 15.47 & 1.68  & 3.16  & 5.07  & 1.54  & 3.22  & 5.39  & 1.54  & 3.12  & 5.07 \\
          & 20    & 0.00  & 0.01  & 0.03  & 7.00  & 10.76 & 14.55 & 1.22  & 2.90  & 5.55  & 1.33  & 2.96  & 5.71  & 1.22  & 2.89  & 5.55 \\
 \cmidrule( r){2-2}\cmidrule(lr){3-5}\cmidrule(lr){6-8}\cmidrule(lr){9-11}\cmidrule(lr){12-14}\cmidrule(lr){15-17}
    \multicolumn{2}{r}{\textbf{Average\ \ }} & \textbf{0.00} & \textbf{0.04} & \textbf{0.27} & \textbf{5.24} & \textbf{11.48} & \textbf{17.41} & \textbf{1.22} & \textbf{3.04} & \textbf{5.71} & \textbf{1.33} & \textbf{3.11} & \textbf{5.90} & \textbf{1.22} & \textbf{3.00} & \textbf{5.71} \\
    \midrule
          &       & {}   &  &  & &  &  &  &   &   &   &  & & & & \\[-1.5ex]
    2     & 5     & 0.22  & 1.37  & 2.06  & 14.72 & 18.83 & 25.34 & 5.53  & 7.84  & 12.73 & 4.08  & 7.52  & 12.82 & 4.08  & 7.37  & 12.73 \\
          & 10    & 0.22  & 1.31  & 1.93  & 11.92 & 18.92 & 28.53 & 4.36  & 7.10  & 11.31 & 5.29  & 7.47  & 10.30 & 4.36  & 7.02  & 10.30 \\
          & 15    & 0.16  & 1.22  & 1.87  & 13.34 & 18.45 & 25.89 & 4.01  & 6.90  & 10.34 & 3.92  & 7.23  & 10.96 & 3.92  & 6.86  & 10.29 \\
          & 20    & 0.12  & 1.08  & 1.80  & 12.39 & 18.30 & 27.49 & 3.80  & 6.57  & 10.24 & 2.89  & 6.69  & 10.21 & 2.89  & 6.37  & 10.21 \\
  \cmidrule( r){2-2}\cmidrule(lr){3-5}\cmidrule(lr){6-8}\cmidrule(lr){9-11}\cmidrule(lr){12-14}\cmidrule(lr){15-17}
    \multicolumn{2}{r}{\textbf{Average\ \ }} & \textbf{0.12} & \textbf{1.24} & \textbf{2.06} & \textbf{11.92} & \textbf{18.62} & \textbf{28.53} & \textbf{3.80} & \textbf{7.09} & \textbf{12.73} & \textbf{2.89} & \textbf{7.22} & \textbf{12.82} & \textbf{2.89} & \textbf{6.89} & \textbf{12.73} \\
    \midrule
          &       & {}   &  &  & &  &  &  &   &   &   &  & & & & \\[-1.5ex]
    3     & 5     & 0.00  & 0.00  & 0.00  & 8.01  & 11.14 & 15.87 & 0.57  & 1.58  & 2.82  & 0.58  & 1.61  & 2.79  & 0.57  & 1.57  & 2.79 \\
          & 10    & 0.00  & 0.00  & 0.00  & 6.88  & 10.56 & 15.89 & 1.01  & 1.63  & 2.52  & 1.08  & 1.68  & 2.34  & 1.01  & 1.60  & 2.28 \\
          & 15    & 0.00  & 0.00  & 0.00  & 6.58  & 10.34 & 14.28 & 0.78  & 1.56  & 2.28  & 0.83  & 1.60  & 2.04  & 0.78  & 1.53  & 2.04 \\
          & 20    & 0.00  & 0.00  & 0.00  & 6.01  & 10.69 & 15.56 & 1.17  & 1.59  & 2.28  & 1.15  & 1.69  & 2.31  & 1.15  & 1.59  & 2.28 \\
    \cmidrule( r){2-2}\cmidrule(lr){3-5}\cmidrule(lr){6-8}\cmidrule(lr){9-11}\cmidrule(lr){12-14}\cmidrule(lr){15-17}
    \multicolumn{2}{r}{\textbf{Average\ \ }} & \textbf{0.00} & \textbf{0.00} & \textbf{0.00} & \textbf{6.01} & \textbf{10.68} & \textbf{15.89} & \textbf{0.57} & \textbf{1.59} & \textbf{2.82} & \textbf{0.58} & \textbf{1.64} & \textbf{2.79} & \textbf{0.57} & \textbf{1.57} & \textbf{2.79} \\
     \midrule
          &       & {}   &  &  & &  &  &  &   &   &   &  & & & & \\[-1.5ex]
    4     & 5     & 0.00  & 0.40  & 1.42  & 7.35  & 11.20 & 19.36 & 1.20  & 3.88  & 12.37 & 0.91  & 2.70  & 10.49 & 0.91  & 2.69  & 10.49 \\
          & 10    & 0.00  & 0.32  & 1.26  & 6.36  & 10.94 & 20.05 & 1.01  & 3.73  & 14.51 & 1.10  & 3.71  & 8.63  & 1.01  & 3.35  & 8.63 \\
          & 15    & 0.00  & 0.21  & 0.74  & 6.41  & 10.48 & 17.73 & 0.36  & 3.00  & 9.76  & 0.36  & 2.80  & 9.93  & 0.36  & 2.77  & 9.76 \\
          & 20    & 0.00  & 0.18  & 0.54  & 6.42  & 9.93  & 18.43 & 0.76  & 2.81  & 9.02  & 1.02  & 3.22  & 11.99 & 0.76  & 2.62  & 9.02 \\
   \cmidrule( r){2-2}\cmidrule(lr){3-5}\cmidrule(lr){6-8}\cmidrule(lr){9-11}\cmidrule(lr){12-14}\cmidrule(lr){15-17}
    \multicolumn{2}{r}{\textbf{Average\ \ }} & \textbf{0.00} & \textbf{0.27} & \textbf{1.42} & \textbf{6.36} & \textbf{10.63} & \textbf{20.05} & \textbf{0.36} & \textbf{3.33} & \textbf{14.51} & \textbf{0.36} & \textbf{3.09} & \textbf{11.99} & \textbf{0.36} & \textbf{2.85} & \textbf{10.49} \\
    \midrule
          &       & {}   &  &  & &  &  &  &   &   &   &  & & & & \\[-1.5ex]
    5     & 5     & 0.18  & 2.24  & 6.84  & 9.46  & 13.65 & 18.22 & 5.98  & 12.47 & 21.11 & 5.39  & 11.75 & 19.73 & 5.39  & 10.94 & 18.22 \\
          & 10    & 0.26  & 2.06  & 3.76  & 9.91  & 13.52 & 18.02 & 8.56  & 12.34 & 16.71 & 8.31  & 12.90 & 19.31 & 8.31  & 11.74 & 16.62 \\
          & 15    & 0.10  & 1.77  & 3.83  & 10.16 & 13.76 & 17.96 & 6.33  & 12.39 & 17.33 & 5.93  & 12.23 & 17.99 & 5.93  & 11.72 & 17.02 \\
          & 20    & 0.23  & 1.74  & 3.11  & 9.67  & 13.46 & 17.05 & 9.69  & 12.92 & 18.94 & 8.39  & 12.86 & 19.62 & 8.39  & 11.96 & 17.05 \\
   \cmidrule( r){2-2}\cmidrule(lr){3-5}\cmidrule(lr){6-8}\cmidrule(lr){9-11}\cmidrule(lr){12-14}\cmidrule(lr){15-17}
    \multicolumn{2}{r}{\textbf{Average\ \ }} & \textbf{0.10} & \textbf{1.94} & \textbf{6.84} & \textbf{9.46} & \textbf{13.60} & \textbf{18.22} & \textbf{5.98} & \textbf{12.53} & \textbf{21.11} & \textbf{5.39} & \textbf{12.42} & \textbf{19.73} & \textbf{5.39} & \textbf{11.58} & \textbf{18.22} \\
    \midrule
          &       & {}   &  &  & &  &  &  &   &   &   &  & & & & \\[-1.5ex]
    6     & 5     & 3.67  & 6.45  & 9.27  & 16.70 & 21.82 & 26.81 & 12.74 & 21.58 & 38.11 & 11.69 & 19.27 & 33.46 & 11.69 & 18.43 & 26.81 \\
          & 10    & 2.91  & 6.62  & 10.76 & 15.75 & 21.97 & 27.13 & 12.93 & 20.81 & 33.03 & 11.47 & 18.90 & 29.90 & 11.47 & 18.17 & 27.13 \\
          & 15    & 2.51  & 6.36  & 12.01 & 16.88 & 22.80 & 31.06 & 13.59 & 21.28 & 35.18 & 11.43 & 19.15 & 31.48 & 11.43 & 19.07 & 31.06 \\
          & 20    & 4.29  & 6.93  & 11.54 & 18.52 & 23.66 & 30.10 & 13.26 & 19.88 & 30.44 & 11.92 & 19.65 & 28.56 & 11.92 & 18.81 & 28.56 \\
   \cmidrule( r){2-2}\cmidrule(lr){3-5}\cmidrule(lr){6-8}\cmidrule(lr){9-11}\cmidrule(lr){12-14}\cmidrule(lr){15-17}
    \multicolumn{2}{r}{\textbf{Average\ \ }} & \textbf{2.51} & \textbf{6.59} & \textbf{12.01} & \textbf{15.75} & \textbf{22.55} & \textbf{31.06} & \textbf{12.74} & \textbf{20.88} & \textbf{38.11} & \textbf{11.43} & \textbf{19.24} & \textbf{33.46} & \textbf{11.43} & \textbf{18.62} & \textbf{31.06} \\
    \midrule
          &       & {}   &  &  & &  &  &  &   &   &   &  & & & & \\[-1.5ex]
    7     & 5     & 0.06  & 0.70  & 2.23  & 4.84  & 9.53  & 12.27 & 4.30  & 8.33  & 12.64 & 3.54  & 7.38  & 12.35 & 3.54  & 7.09  & 11.45 \\
          & 10    & 0.02  & 0.64  & 2.88  & 5.41  & 9.73  & 12.58 & 3.48  & 7.79  & 11.35 & 3.35  & 8.07  & 13.69 & 3.35  & 7.51  & 10.92 \\
          & 15    & 0.00  & 0.65  & 3.11  & 5.20  & 9.71  & 14.02 & 1.89  & 6.79  & 10.72 & 2.16  & 6.44  & 11.56 & 1.89  & 6.08  & 9.16 \\
          & 20    & 0.00  & 0.49  & 1.84  & 4.81  & 9.26  & 12.60 & 4.32  & 8.21  & 12.13 & 3.74  & 7.16  & 10.43 & 3.74  & 6.98  & 10.43 \\
    \cmidrule( r){2-2}\cmidrule(lr){3-5}\cmidrule(lr){6-8}\cmidrule(lr){9-11}\cmidrule(lr){12-14}\cmidrule(lr){15-17}
    \multicolumn{2}{r}{\textbf{Average\ \ }} & \textbf{0.00} & \textbf{0.62} & \textbf{3.11} & \textbf{4.81} & \textbf{9.55} & \textbf{14.02} & \textbf{1.89} & \textbf{7.76} & \textbf{12.64} & \textbf{2.16} & \textbf{7.24} & \textbf{13.69} & \textbf{1.89} & \textbf{6.90} & \textbf{11.45} \\
    \midrule
          &       & {}   &  &  & &  &  &  &   &   &   &  & & & & \\[-1.5ex]
    8     & 5     & 0.01  & 1.10  & 3.83  & 5.78  & 10.68 & 16.93 & 4.70  & 10.35 & 17.95 & 4.64  & 9.38  & 17.97 & 4.64  & 9.00  & 16.93 \\
          & 10    & 0.01  & 1.15  & 5.44  & 6.35  & 10.58 & 19.82 & 2.90  & 9.48  & 16.39 & 3.46  & 9.02  & 16.39 & 2.90  & 8.46  & 16.39 \\
          & 15    & 0.00  & 1.50  & 6.77  & 5.72  & 10.64 & 17.46 & 4.94  & 9.45  & 17.73 & 4.28  & 9.10  & 17.87 & 4.28  & 8.85  & 17.46 \\
          & 20    & 0.00  & 1.20  & 7.11  & 5.42  & 10.52 & 17.75 & 4.07  & 9.26  & 19.11 & 5.49  & 10.10 & 16.81 & 4.07  & 8.73  & 16.81 \\
\cline{2-17} \noalign{\smallskip}
    \multicolumn{2}{r}{\textbf{Average\ \ }} & \textbf{0.00} & \textbf{1.23} & \textbf{7.11} & \textbf{5.42} & \textbf{10.60} & \textbf{19.82} & \textbf{2.90} & \textbf{9.63} & \textbf{19.11} & \textbf{3.46} & \textbf{9.39} & \textbf{17.97} & \textbf{2.90} & \textbf{8.76} & \textbf{17.46} \\
    \bottomrule
        \end{tabular}}%
    	\caption{Minimum, average (shifted geometric mean) and maximum relative percentage gaps between between lower bounds and the best upper bound.}\label{tab:gaps_LB}
    \end{table}

\begin{table}
\centering
{\small
     \begin{tabular}{@{}cccccccccccccc@{}}    \toprule
    Network & Storage& \multicolumn{3}{c}{\%gap LP-TDIP(RD)  } & \multicolumn{3}{c}{\%gap
      UB-TDIP(RD)   } & \multicolumn{3}{c}{\%gap LP-TDIP(TI) } & \multicolumn{3}{c}{\%gap UB-TDIP(TI) }\\
   & Capacity & \multicolumn{3}{c}{\& Best LB  } & \multicolumn{3}{c}{\& Best LB   } & \multicolumn{3}{c}{\& Best LB } & \multicolumn{3}{c}{\& Best LB } \\
  \cmidrule( r){1-2}\cmidrule(lr){3-5}\cmidrule(lr){6-8}\cmidrule(lr){9-11}\cmidrule(lr){12-14}
		&  & min & avg & max & min & avg & max & min & avg & max & min & avg & max \\ \cmidrule(lr){3-5}\cmidrule(lr){6-8}\cmidrule(lr){9-11}\cmidrule(lr){12-14}
    1	& {5} & 0.00  & 0.15  & 0.36  & 0.00  & 0.10  & 0.27  & 0.00  & 0.14  & 0.36  & 0.00  & 0.09  & 0.36 \\
		& {10} & 0.00  & 0.10  & 0.28  & 0.00  & 0.06  & 0.23  & 0.00  & 0.09  & 0.27  & 0.00  & 0.04  & 0.25 \\
		& {15} & 0.00  & 0.07  & 0.24  & 0.00  & 0.04  & 0.15  & 0.00  & 0.06  & 0.23  & 0.00  & 0.02  & 0.11 \\
		& {20} & 0.00  & 0.05  & 0.21  & 0.00  & 0.03  & 0.11  & 0.00  & 0.05  & 0.21  & 0.00  & 0.01  & 0.03 \\
\cmidrule( r){2-2}\cmidrule(lr){3-5}\cmidrule(lr){6-8}\cmidrule(lr){9-11}\cmidrule(lr){12-14}
    \multicolumn{2}{r}{\textbf{Average\ \ }} & \textbf{0.00} & \textbf{0.09} & \textbf{0.36} & \textbf{0.00} & \textbf{0.06} & \textbf{0.27} & \textbf{0.00} & \textbf{0.09} & \textbf{0.36} & \textbf{0.00} & \textbf{0.04} & \textbf{0.36} \\
\midrule
		& {} &       &       &       &       &       &       &       &       &       &       &       &  \\[-1.5ex]
    2	& {5} & 0.51  & 2.11  & 3.29  & 0.22  & 1.55  & 2.49  & 0.50  & 2.08  & 3.25  & 0.29  & 1.38  & 2.06 \\
		& {10} & 0.51  & 1.98  & 3.12  & 0.22  & 1.47  & 2.37  & 0.49  & 1.95  & 3.10  & 0.28  & 1.32  & 1.93 \\
		& {15} & 0.46  & 1.88  & 3.07  & 0.16  & 1.37  & 2.31  & 0.44  & 1.86  & 3.06  & 0.25  & 1.24  & 1.87 \\
		& {20} & 0.42  & 1.73  & 2.97  & 0.12  & 1.25  & 2.34  & 0.40  & 1.71  & 2.97  & 0.22  & 1.10  & 1.80 \\
\cmidrule( r){2-2}\cmidrule(lr){3-5}\cmidrule(lr){6-8}\cmidrule(lr){9-11}\cmidrule(lr){12-14}
    \multicolumn{2}{r}{\textbf{Average\ \ }} & \textbf{0.42} & \textbf{1.92} & \textbf{3.29} & \textbf{0.12} & \textbf{1.41} & \textbf{2.49} & \textbf{0.40} & \textbf{1.90} & \textbf{3.25} & \textbf{0.22} & \textbf{1.26} & \textbf{2.06} \\
\midrule
		& {} &       &       &       &       &       &       &       &       &       &       &       &  \\[-1.5ex]
    3  	& {5} & 0.00  & 0.00  & 0.00  & 0.00  & 0.00  & 0.00  & 0.00  & 0.00  & 0.00  & 0.00  & 0.00  & 0.00 \\
		& {10} & 0.00  & 0.00  & 0.00  & 0.00  & 0.00  & 0.00  & 0.00  & 0.00  & 0.00  & 0.00  & 0.00  & 0.00 \\
		& {15} & 0.00  & 0.00  & 0.00  & 0.00  & 0.00  & 0.00  & 0.00  & 0.00  & 0.00  & 0.00  & 0.00  & 0.00 \\
		& {20} & 0.00  & 0.00  & 0.00  & 0.00  & 0.00  & 0.00  & 0.00  & 0.00  & 0.00  & 0.00  & 0.00  & 0.00 \\
\cmidrule( r){2-2}\cmidrule(lr){3-5}\cmidrule(lr){6-8}\cmidrule(lr){9-11}\cmidrule(lr){12-14}
    \multicolumn{2}{r}{\textbf{Average\ \ }} & \textbf{0.00} & \textbf{0.00} & \textbf{0.00} & \textbf{0.00} & \textbf{0.00} & \textbf{0.00} & \textbf{0.00} & \textbf{0.00} & \textbf{0.00} & \textbf{0.00} & \textbf{0.00} & \textbf{0.00} \\
\midrule
		& {} &       &       &       &       &       &       &       &       &       &       &       &  \\[-1.5ex]
    4 	& {5} & 0.00  & 0.40  & 1.44  & 0.00  & 0.40  & 1.42  & 0.00  & 0.40  & 1.44  & 0.00  & 0.40  & 1.42 \\
		& {10} & 0.00  & 0.32  & 1.27  & 0.00  & 0.32  & 1.26  & 0.00  & 0.32  & 1.27  & 0.00  & 0.32  & 1.26 \\
		& {15} & 0.00  & 0.21  & 0.77  & 0.00  & 0.21  & 0.74  & 0.00  & 0.21  & 0.76  & 0.00  & 0.21  & 0.74 \\
		& {20} & 0.00  & 0.18  & 0.56  & 0.00  & 0.18  & 0.54  & 0.00  & 0.18  & 0.55  & 0.00  & 0.18  & 0.54 \\
\cmidrule( r){2-2}\cmidrule(lr){3-5}\cmidrule(lr){6-8}\cmidrule(lr){9-11}\cmidrule(lr){12-14}
    \multicolumn{2}{r}{\textbf{Average\ \ }} & \textbf{0.00} & \textbf{0.28} & \textbf{1.44} & \textbf{0.00} & \textbf{0.27} & \textbf{1.42} & \textbf{0.00} & \textbf{0.28} & \textbf{1.44} & \textbf{0.00} & \textbf{0.27} & \textbf{1.42} \\
\midrule
		& {} &       &       &       &       &       &       &       &       &       &       &       &  \\[-1.5ex]
    5 	& {5} & 0.18  & 2.24  & 6.86  & 0.18  & 2.24  & 6.84  & 0.18  & 2.24  & 6.85  & 0.18  & 2.24  & 6.84 \\
		& {10} & 0.26  & 2.07  & 3.77  & 0.26  & 2.06  & 3.76  & 0.26  & 2.07  & 3.77  & 0.26  & 2.06  & 3.76 \\
		& {15} & 0.10  & 1.77  & 3.84  & 0.10  & 1.77  & 3.84  & 0.10  & 1.77  & 3.84  & 0.10  & 1.77  & 3.83 \\
		& {20} & 0.23  & 1.74  & 3.11  & 0.23  & 1.74  & 3.11  & 0.23  & 1.74  & 3.11  & 0.23  & 1.74  & 3.11 \\
\cmidrule( r){2-2}\cmidrule(lr){3-5}\cmidrule(lr){6-8}\cmidrule(lr){9-11}\cmidrule(lr){12-14}
    \multicolumn{2}{r}{\textbf{Average\ \ }} & \textbf{0.10} & \textbf{1.95} & \textbf{6.86} & \textbf{0.10} & \textbf{1.94} & \textbf{6.84} & \textbf{0.10} & \textbf{1.95} & \textbf{6.85} & \textbf{0.10} & \textbf{1.94} & \textbf{6.84} \\
\midrule
		& {} &       &       &       &       &       &       &       &       &       &       &       &  \\[-1.5ex]
    6 	& {5} & 3.97  & 6.78  & 9.39  & 3.67  & 6.47  & 9.28  & 3.96  & 6.76  & 9.36  & 3.72  & 6.50  & 9.27 \\
		& {10} & 3.14  & 6.89  & 10.85 & 2.91  & 6.63  & 10.76 & 3.11  & 6.87  & 10.83 & 2.92  & 6.69  & 10.76 \\
		& {15} & 2.68  & 6.63  & 12.29 & 2.51  & 6.37  & 12.10 & 2.67  & 6.61  & 12.25 & 2.52  & 6.43  & 12.01 \\
		& {20} & 4.53  & 7.20  & 11.76 & 4.29  & 6.94  & 11.55 & 4.53  & 7.18  & 11.72 & 4.40  & 7.02  & 11.54 \\
\cmidrule( r){2-2}\cmidrule(lr){3-5}\cmidrule(lr){6-8}\cmidrule(lr){9-11}\cmidrule(lr){12-14}
    \multicolumn{2}{r}{\textbf{Average\ \ }} & \textbf{2.68} & \textbf{6.87} & \textbf{12.29} & \textbf{2.51} & \textbf{6.60} & \textbf{12.10} & \textbf{2.67} & \textbf{6.85} & \textbf{12.25} & \textbf{2.52} & \textbf{6.66} & \textbf{12.01} \\
\midrule
		& {} &       &       &       &       &       &       &       &       &       &       &       &  \\[-1.5ex]
    7 	& {5} & 0.06  & 0.70  & 2.23  & 0.06  & 0.70  & 2.23  & 0.06  & 0.70  & 2.23  & 0.06  & 0.70  & 2.23 \\
		& {10} & 0.02  & 0.64  & 2.88  & 0.02  & 0.64  & 2.88  & 0.02  & 0.64  & 2.88  & 0.02  & 0.64  & 2.88 \\
		& {15} & 0.00  & 0.65  & 3.11  & 0.00  & 0.65  & 3.11  & 0.00  & 0.65  & 3.11  & 0.00  & 0.65  & 3.11 \\
		& {20} & 0.00  & 0.49  & 1.84  & 0.00  & 0.49  & 1.84  & 0.00  & 0.49  & 1.84  & 0.00  & 0.49  & 1.84 \\
\cmidrule( r){2-2}\cmidrule(lr){3-5}\cmidrule(lr){6-8}\cmidrule(lr){9-11}\cmidrule(lr){12-14}
    \multicolumn{2}{r}{\textbf{Average\ \ }} & \textbf{0.00} & \textbf{0.62} & \textbf{3.11} & \textbf{0.00} & \textbf{0.62} & \textbf{3.11} & \textbf{0.00} & \textbf{0.62} & \textbf{3.11} & \textbf{0.00} & \textbf{0.62} & \textbf{3.11} \\
\midrule
		& {} &       &       &       &       &       &       &       &       &       &       &       &  \\[-1.5ex]
    8 	& {5} & 0.01  & 1.10  & 3.83  & 0.01  & 1.10  & 3.83  & 0.01  & 1.10  & 3.83  & 0.01  & 1.10  & 3.83 \\
		& {10} & 0.01  & 1.15  & 5.44  & 0.01  & 1.15  & 5.44  & 0.01  & 1.15  & 5.44  & 0.01  & 1.15  & 5.44 \\
		& {15} & 0.00  & 1.50  & 6.77  & 0.00  & 1.50  & 6.77  & 0.00  & 1.50  & 6.77  & 0.00  & 1.50  & 6.77 \\
		& {20} & 0.00  & 1.20  & 7.11  & 0.00  & 1.20  & 7.11  & 0.00  & 1.20  & 7.11  & 0.00  & 1.20  & 7.11 \\
\cmidrule( r){2-2}\cmidrule(lr){3-5}\cmidrule(lr){6-8}\cmidrule(lr){9-11}\cmidrule(lr){12-14}
    \multicolumn{2}{r}{\textbf{Average\ \ }} & \textbf{0.00} & \textbf{1.23} & \textbf{7.11} & \textbf{0.00} & \textbf{1.23} & \textbf{7.11} & \textbf{0.00} & \textbf{1.23} & \textbf{7.11} & \textbf{0.00} & \textbf{1.23} & \textbf{7.11} \\
    \bottomrule
    \end{tabular}}
 \caption{Minimum, average (shifted geometric mean) and maximum percentage gaps between upper bounds and the best lower bound.}\label{tab:gaps_UB}
\end{table}

	\begin{sidewaystable}[htbp]
\renewcommand{\arraystretch}{1.2}
  \centering  
\scalebox{0.68}{
    \begin{tabular}{@{}ccc@{\  }c@{\  }ccc@{\  }c@{\  }ccc@{\  }c@{\ }ccc@{\  }c@{\  }ccc@{\  }c@{\   }cc@{}}
    \toprule
    Network & Storage  & \multicolumn{4}{c}{LP-TDIP(RD)} & \multicolumn{4}{c}{UB-TDIP(RD)} & \multicolumn{4}{c}{LP-TDIP(TI)} & \multicolumn{4}{c}{UB-TDIP(TI)} & \multicolumn{4}{c}{LB1} \\
   &  capacity & \multicolumn{4}{c}{} & \multicolumn{4}{c}{} & \multicolumn{4}{c}{} & \multicolumn{4}{c}{} & \multicolumn{4}{c}{} \\

 \cmidrule(lr){3-6}\cmidrule(lr){7-10}\cmidrule(lr){11-14}\cmidrule(lr){15-18}\cmidrule(l ){19-22} 
    & & min   & avg   & max   & \# time lim & min   & avg   & max   & \# time lim & min   & avg
    & max   & \# time lim & min   & avg   & max   & \# time lim & min   & avg   & max   & \#
    time limit \\
 \cmidrule( r){1-2}\cmidrule(lr){3-6}\cmidrule(lr){7-10}\cmidrule(lr){11-14}\cmidrule(lr){15-18}\cmidrule(l ){19-22}
    1     & 5     & 0.39  & 0.68  & 1.14  & 0     & 7.56  & 123.85 & 1932.99 & 0     & 1.33  & 1.78  & 2.29  & 0     & 1108.14 & 4794.57 & 7200  & 6     & 82.83 & 4029.62 & 7200  & 8 \\
          & 10    & 0.43  & 0.61  & 0.76  & 0     & 20.46 & 74.87 & 248.73 & 0     & 1.38  & 1.90  & 2.70  & 0     & 68.37 & 2046.89 & 7200  & 6     & 139.16 & 1375.25 & 7200  & 2 \\
          & 15    & 0.50  & 0.67  & 0.78  & 0     & 9.72  & 52.15 & 139.94 & 0     & 1.37  & 2.06  & 2.90  & 0     & 110.40 & 1661.98 & 7200  & 4     & 80.83 & 854.58 & 7200  & 4 \\
          & 20    & 0.36  & 0.49  & 0.67  & 0     & 8.51  & 40.13 & 103.25 & 0     & 1.30  & 1.86  & 2.29  & 0     & 57.64 & 955.02 & 7200  & 1     & 32.11 & 224.784 & 1078.05 & 0 \\
\cmidrule( r){2-2}\cmidrule(lr){3-6}\cmidrule(lr){7-10}\cmidrule(lr){11-14}\cmidrule(lr){15-18}\cmidrule(l ){19-22} 
    \multicolumn{2}{r}{\textbf{Average\ \ }}  & \textbf{0.36} & \textbf{0.60} & \textbf{1.14} & \textbf{0} & \textbf{7.56} & \textbf{66.37} & \textbf{1932.99} & \textbf{0} & \textbf{1.30} & \textbf{1.90} & \textbf{2.90} & \textbf{0} & \textbf{57.64} & \textbf{1986.65} & \textbf{7200} & \textbf{17} & \textbf{32.11} & \textbf{1015.76} & \textbf{7200} & \textbf{14} \\
\midrule
    2     & 5     & 2.45  & 4.29  & 8.50  & 0     & 7200  & 7200  & 7200  & 10    & 4.88  & 11.44 & 31.07 & 0     & 7200  & 7200  & 7200  & 10    & 7200  & 7200  & 7200  & 10 \\
          & 10    & 1.82  & 3.77  & 7.29  & 0     & 7200  & 7200  & 7200  & 10    & 6.01  & 11.25 & 23.41 & 0     & 7200  & 7200  & 7200  & 10    & 7200  & 7200  & 7200  & 10 \\
          & 15    & 2.25  & 4.18  & 6.95  & 0     & 7200  & 7200  & 7200  & 10    & 5.43  & 11.71 & 21.72 & 0     & 7200  & 7200  & 7200  & 10    & 7200  & 7200  & 7200  & 10 \\
          & 20    & 1.52  & 2.95  & 5.18  & 0     & 7200  & 7200  & 7200  & 10    & 5.42  & 11.42 & 19.25 & 0     & 7200  & 7200  & 7200  & 10    & 7200  & 7200  & 7200  & 10 \\
\cmidrule( r){2-2}\cmidrule(lr){3-6}\cmidrule(lr){7-10}\cmidrule(lr){11-14}\cmidrule(lr){15-18}\cmidrule(l ){19-22}
    \multicolumn{2}{r}{\textbf{Average\ \ }}  & \textbf{1.52} & \textbf{3.76} & \textbf{8.50} & \textbf{0} & \textbf{7200} & \textbf{7200} & \textbf{7200} & \textbf{40} & \textbf{4.88} & \textbf{11.46} & \textbf{31.07} & \textbf{0} & \textbf{7200} & \textbf{7200} & \textbf{7200} & \textbf{40} & \textbf{7200} & \textbf{7200} & \textbf{7200} & \textbf{40} \\
\midrule
    3     & 5     & 1.64  & 1.89  & 2.17  & 0     & 19.67 & 46.16 & 115.63 & 0     & 2.04  & 2.37  & 2.67  & 0     & 48.19 & 245.75 & 431.93 & 0     & 46.94 & 92.138 & 128.93 & 0 \\
          & 10    & 1.33  & 1.60  & 2.15  & 0     & 18.62 & 31.33 & 53.32 & 0     & 2.35  & 2.79  & 3.21  & 0     & 117.78 & 322.78 & 757.44 & 0     & 53.21 & 80.5027 & 150.93 & 0 \\
          & 15    & 1.75  & 2.12  & 2.91  & 0     & 18.87 & 41.03 & 99.18 & 0     & 2.61  & 2.85  & 3.37  & 0     & 109.68 & 226.16 & 533.15 & 0     & 55.05 & 82.5626 & 117.37 & 0 \\
          & 20    & 1.26  & 1.52  & 1.86  & 0     & 18.36 & 38.86 & 124.35 & 0     & 2.25  & 2.63  & 2.97  & 0     & 111.99 & 228.64 & 875.31 & 0     & 49.73 & 90.7722 & 162.99 & 0 \\
\cmidrule( r){2-2}\cmidrule(lr){3-6}\cmidrule(lr){7-10}\cmidrule(lr){11-14}\cmidrule(lr){15-18}\cmidrule(l ){19-22}
    \multicolumn{2}{r}{\textbf{Average\ \ }}  & \textbf{1.26} & \textbf{1.77} & \textbf{2.91} & \textbf{0} & \textbf{18.36} & \textbf{38.97} & \textbf{124.35} & \textbf{0} & \textbf{2.04} & \textbf{2.65} & \textbf{3.37} & \textbf{0} & \textbf{48.19} & \textbf{253.07} & \textbf{875.31} & \textbf{0} & \textbf{46.94} & \textbf{86.35} & \textbf{162.99} & \textbf{0} \\
\midrule
    4     & 5     & 6.02  & 15.31 & 36.42 & 0     & 3305.32 & 6660.71 & 7200  & 9     & 7.75  & 17.25 & 47.90 & 0     & 2659.50 & 6517.47 & 7200  & 9     & 7200  & 7200  & 7200  & 10 \\
          & 10    & 8.10  & 16.17 & 42.79 & 0     & 1664.31 & 5705.44 & 7200  & 8     & 7.62  & 17.40 & 40.44 & 0     & 3477.27 & 6375.37 & 7200  & 8     & 1441.61 & 6130.33 & 7200  & 9 \\
          & 15    & 6.52  & 17.73 & 41.12 & 0     & 418.42 & 5115.78 & 7200  & 6     & 7.76  & 17.13 & 40.41 & 0     & 5244.40 & 6905.67 & 7200  & 8     & 2007  & 6336.57 & 7200  & 9 \\
          & 20    & 5.28  & 11.70 & 22.48 & 0     & 643.63 & 4525.00 & 7200  & 6     & 8.04  & 15.25 & 32.50 & 0     & 4296.82 & 6487.23 & 7200  & 7     & 1173.74 & 4926.81 & 7200  & 7 \\
\cmidrule( r){2-2}\cmidrule(lr){3-6}\cmidrule(lr){7-10}\cmidrule(lr){11-14}\cmidrule(lr){15-18}\cmidrule(l ){19-22}
    \multicolumn{2}{r}{\textbf{Average\ \ }}  & \textbf{5.28} & \textbf{15.05} & \textbf{42.79} & \textbf{0} & \textbf{418.42} & \textbf{5446.09} & \textbf{7200} & \textbf{29} & \textbf{7.62} & \textbf{16.73} & \textbf{47.90} & \textbf{0} & \textbf{2659.50} & \textbf{6568.44} & \textbf{7200} & \textbf{32} & \textbf{1173.74} & \textbf{6092.69} & \textbf{7200} & \textbf{35} \\
\midrule
    5     & 5     & 42.32 & 166.73 & 293.56 & 0     & 7200  & 7200  & 7200  & 10    & 40.67 & 168.57 & 310.82 & 0     & 7200  & 7200  & 7200  & 10    & 7200  & 7200  & 7200  & 10 \\
          & 10    & 33.74 & 161.50 & 272.81 & 0     & 7200  & 7200  & 7200  & 10    & 60.37 & 191.09 & 293.68 & 0     & 7200  & 7200  & 7200  & 10    & 7200  & 7200  & 7200  & 10 \\
          & 15    & 41.21 & 150.15 & 255.19 & 0     & 7200  & 7200  & 7200  & 10    & 36.96 & 157.53 & 260.70 & 0     & 7200  & 7200  & 7200  & 10    & 7200  & 7200  & 7200  & 10 \\
          & 20    & 31.58 & 111.14 & 210.31 & 0     & 7200  & 7200  & 7200  & 10    & 34.51 & 128.53 & 231.28 & 0     & 7200  & 7200  & 7200  & 10    & 7200  & 7200  & 7200  & 10 \\
\cmidrule( r){2-2}\cmidrule(lr){3-6}\cmidrule(lr){7-10}\cmidrule(lr){11-14}\cmidrule(lr){15-18}\cmidrule(l ){19-22}
    \multicolumn{2}{r}{\textbf{Average\ \ }}  & \textbf{31.58} & \textbf{145.59} & \textbf{293.56} & \textbf{0} & \textbf{7200} & \textbf{7200} & \textbf{7200} & \textbf{40} & \textbf{34.51} & \textbf{159.81} & \textbf{310.82} & \textbf{0} & \textbf{7200} & \textbf{7200} & \textbf{7200} & \textbf{40} & \textbf{7200} & \textbf{7200} & \textbf{7200} & \textbf{40} \\
\midrule
    6     & 5     & 47.21 & 152.84 & 296.24 & 0     & 7200  & 7200  & 7200  & 10    & 55.66 & 170.61 & 325.87 & 0     & 7200  & 7200  & 7200  & 10    & 7200  & 7200  & 7200  & 10 \\
          & 10    & 37.88 & 121.54 & 280.85 & 0     & 7200  & 7200  & 7200  & 10    & 79.33 & 224.92 & 349.82 & 0     & 7200  & 7200  & 7200  & 10    & 7200  & 7200  & 7200  & 10 \\
          & 15    & 50.38 & 139.22 & 267.68 & 0     & 7200  & 7200  & 7200  & 10    & 65.39 & 180.17 & 310.39 & 0     & 7200  & 7200  & 7200  & 10    & 7200  & 7200  & 7200  & 10 \\
          & 20    & 35.39 & 112.54 & 295.50 & 0     & 7200  & 7200  & 7200  & 10    & 63.16 & 168.17 & 302.63 & 0     & 7200  & 7200  & 7200  & 10    & 7200  & 7200  & 7200  & 10 \\
\cmidrule( r){2-2}\cmidrule(lr){3-6}\cmidrule(lr){7-10}\cmidrule(lr){11-14}\cmidrule(lr){15-18}\cmidrule(l ){19-22}
    \multicolumn{2}{r}{\textbf{Average\ \ }}  & \textbf{35.39} & \textbf{130.61} & \textbf{296.24} & \textbf{0} & \textbf{7200} & \textbf{7200} & \textbf{7200} & \textbf{40} & \textbf{55.66} & \textbf{184.66} & \textbf{349.82} & \textbf{0} & \textbf{7200} & \textbf{7200} & \textbf{7200} & \textbf{40} & \textbf{7200} & \textbf{7200} & \textbf{7200} & \textbf{40} \\
\midrule
    7     & 5     & 56.43 & 99.52 & 272.67 & 0     & 7200  & 7200  & 7200  & 10    & 42.19 & 80.73 & 250.25 & 0     & 7200  & 7200  & 7200  & 10    & 7200  & 7200  & 7200  & 10 \\
          & 10    & 45.29 & 84.83 & 258.97 & 0     & 7200  & 7200  & 7200  & 10    & 43.08 & 70.31 & 221.18 & 0     & 7200  & 7200  & 7200  & 10    & 7200  & 7200  & 7200  & 10 \\
          & 15    & 51.39 & 75.08 & 213.98 & 0     & 7200  & 7200  & 7200  & 10    & 39.34 & 62.40 & 218.22 & 0     & 7200  & 7200  & 7200  & 10    & 7156.27 & 7200  & 7200  & 9 \\
          & 20    & 36.98 & 63.82 & 190.76 & 0     & 7200  & 7200  & 7200  & 10    & 37.95 & 59.63 & 155.15 & 0     & 7200  & 7200  & 7200  & 10    & 7081.23 & 7200  & 7200  & 9 \\
\cmidrule( r){2-2}\cmidrule(lr){3-6}\cmidrule(lr){7-10}\cmidrule(lr){11-14}\cmidrule(lr){15-18}\cmidrule(l ){19-22}
    \multicolumn{2}{r}{\textbf{Average\ \ }}  & \textbf{36.98} & \textbf{79.75} & \textbf{272.67} & \textbf{0} & \textbf{7200} & \textbf{7200} & \textbf{7200} & \textbf{40} & \textbf{37.95} & \textbf{67.79} & \textbf{250.25} & \textbf{} & \textbf{7200} & \textbf{7200} & \textbf{7200} & \textbf{40} & \textbf{7081.23} & \textbf{7200} & \textbf{7200} & \textbf{38} \\
\midrule
    8     & 5     & 100.97 & 165.81 & 283.22 & 0     & 7200  & 7200  & 7200  & 10    & 61.50 & 133.69 & 320.92 & 0     & 7200  & 7200  & 7200  & 10    & 6785.55 & 7200  & 7200  & 9 \\
          & 10    & 100.49 & 166.42 & 281.70 & 0     & 7200  & 7200  & 7200  & 10    & 68.03 & 144.74 & 213.20 & 0     & 7200  & 7200  & 7200  & 10    & 7098.63 & 7200  & 7200  & 9 \\
          & 15    & 90.45 & 136.79 & 207.05 & 0     & 7200  & 7200  & 7200  & 10    & 65.40 & 107.90 & 231.74 & 0     & 7200  & 7200  & 7200  & 10    & 6440.05 & 7120.13 & 7200  & 9 \\
          & 20    & 63.72 & 129.98 & 273.14 & 0     & 7200  & 7200  & 7200  & 10    & 62.40 & 100.90 & 258.50 & 0     & 7200  & 7200  & 7200  & 10    & 6538  & 7130.89 & 7200  & 9 \\
\cmidrule( r){2-2}\cmidrule(lr){3-6}\cmidrule(lr){7-10}\cmidrule(lr){11-14}\cmidrule(lr){15-18}\cmidrule(l ){19-22}
    \multicolumn{2}{r}{\textbf{Average\ \ }}  & \textbf{63.72} & \textbf{148.83} & \textbf{283.22} & \textbf{0} & \textbf{7200} & \textbf{7200} & \textbf{7200} & \textbf{40} & \textbf{61.50} & \textbf{120.47} & \textbf{320.92} & \textbf{0} & \textbf{7200} & \textbf{7200} & \textbf{7200} & \textbf{40} & \textbf{6440.05} & \textbf{7162.66} & \textbf{7200} & \textbf{36} \\
    \bottomrule
    \end{tabular} }%
  \caption{Minimum, average (geometric mean) and maximum run times to obtain upper and lower bounds, in seconds. }\label{tab:run_times}%
\end{sidewaystable}%

\begin{table}[htbp]
\renewcommand{\arraystretch}{1.2}
  \centering
    \begin{tabular}{@{}ccccc@{}} \toprule
    Year  & \% gap LP-TDIP(RD)  & \% gap UB-TDIP(RD)  & \% gap LP-TDIP(TI)  & \% gap UB-TDIP(TI)  \\
    & \& Best LB & \& Best LB & \& Best LB & \& Best LB \\ \midrule
    2010  & 8.49  & 8.04  & 8.43  & \textbf{7.64} \\
    2011  & 5.82  & \textbf{5.65}  & 5.81  & 5.74 \\
    \bottomrule
    \end{tabular}%
    \caption{Percentage gaps between upper bounds and the best lower bound for the HVCC instances.}\label{tab:gaps_UB_HVCC}%
\end{table}%

\begin{table}[htbp]
\renewcommand{\arraystretch}{1.2}
  \centering
    \begin{tabular}{@{}cS[table-format=2.2]S[table-format=2.2]S[table-format=2.2]S[table-format=2.2]S[table-format=2.2]@{}} \toprule
    Year  & {\% Gap Best UB} & {\% Gap Best UB} & {\% Gap Best UB} & {\% Gap Best UB} & {\% Gap Best UB}\\
          & {\& LB1} & {\& CoM-LP1800} & {\& CoM-FS1800} & {\& Proj-LP1800} & {\& Proj-FS1800} \\ \midrule
    2010  & 7.65  & 10.77 & $\mathbf{7.64}$  & 9.00  & 7.77 \\
    2011  & 11.55 & 7.88  & $\mathbf{5.65}$  & 6.90  & 5.68 \\
    \bottomrule
    \end{tabular}%
  \caption{Percentage gaps between lower bounds and the best upper bound for the HVCC instances.}\label{tab:gaps_LB_HVCC}%
\end{table}%

\begin{table}[htbp]
\renewcommand{\arraystretch}{1.2}
  \centering
    \begin{tabular}{@{}ccccc@{}} \toprule
    Year  &  LP-TDIP(RD)  &  UB-TDIP(RD)  & LP-TDIP(TI)  & UB-TDIP(TI)  \\
    \midrule
    2010  & 50.46 & 14400.00 & 1537.14 & 14400.00 \\
    2011  & 69.20 & 14400.00 & 2558.93 & 14400.00 \\
    \bottomrule
    \end{tabular}%
    \caption{Run times of TDIP LP relaxations and MIPs for the HVCC instances, in seconds.}\label{tab:run_times_HVCC}%
\end{table}

\section{Extensions toward practical application}\label{sec:practical}

Although our focus in this paper is on a problem motivated by practical applications in maintenance scheduling, in order to obtain insights about the nature of the problem and its theoretical properties, we have studied a variant that is simpler than one that might be encountered in practice. In this section, we mention a few extensions that we have encountered, or that might naturally arise, in practice, and briefly indicate how they might be represented in our model.

In the HVCC application that motivated this research, (described, for example, in
\cite{boland2012mixed}), precedence relations between jobs were encountered. These can easily
modeled linearly in the CTIP model, for example, with the constraint $w_{ai} + w_{a'j} \leqslant 1$
for all $j \leqslant i$ when the job on arc $a$ must be completed before the one on arc $a'$ can start. In the TDIP model, these
relations can be approximated, for example, with the constraints
\[\sum_{i'\in\mathcal S_a\,:\,i'\geqslant i}y_{ai'}+\sum_{j\in\mathcal S_{a'}\,:\,t_j\leqslant
  t_{i-1}+p_a}y_{a'j}\leqslant 1\qquad\text{for all }i\in\mathcal S_a,\]
which capture that if the job on arc $a$ starts at time $t_{i-1}$ or later then the job on arc $a'$
cannot start before time $t_{i-1}+p_a$.


Incompatible sets of maintenance jobs, at most one of which could be in progress at any one time, were also encountered in the HVCC application. This is readily represented in the CTIP model with the constraint $\sum_{a\in{\cal C}} w_{ai} \leqslant 1$, and in the TDIP model with $\sum_{a\in {\cal C}} z_{ai} \leqslant 1$, where $\cal C$ is a set of arcs with mutually incompatible jobs.

In practical settings, it may also be the case that resources required to carry out maintenance, such as work crews, or equipment, are limited. In the HVCC setting, such limitations did indeed occur, but their nature led them to be handled by a combination of the release dates and deadlines, precedence constraints, and incompatible job sets. For example, incompatible jobs sets may consist of jobs that require a special type of maintenance equipment, only one of which is available. These latter constraints can readily be generalized to the case of a limited number of maintenance jobs that could occur at any one time: in the constraints given in the last paragraph, the right-hand sides of $1$ can simply be replaced by the required limit. Clearly, in general, there are many possible ways in which resources may constrain a maintenance schedule, and the precise nature of these will be application-dependent. 

Another natural consideration from an application point of view is the possibility of maintenance on nodes, where a job on node $v$ prevents flow through this node for the corresponding time period. This can be easily captured by our model using the standard node-splitting device. 

For a non-storage node
$v\in V\setminus W$ with a maintenance job, we replace $v$ by two nodes $v'$ and $v''$, connected by an
arc $(v',v'')$ whose capacity is
\begin{equation}\label{eq:node_splitting}
	\min\left\{\sum_{a\in\delta^{\text{in}}(v)}u_a,\quad \sum_{a\in\delta^{\text{out}}(v)}u_a\right\} 
\end{equation}
while every arc $a=(w,v)$ in the original network is replaced
by an arc $(w,v')$ of capacity $u_a$, and every arc $a=(v,w)$ in the original network is replaced
by an arc $(v'',w)$ of capacity $u_a$. Then the job on node $v$ is equivalent to a job on the arc
$(v',v'')$ with the same release date, deadline, and processing time.

For storage nodes the situation is a little bit more complicated. One could think of situations where
a maintenance job blocks only inbound flow, only outbound flows or both. To capture this in full
generality we can replace a storage node $v\in W$ by three nodes $v'$, $v''$ and $v'''$ with arcs
$(v',v'')$ and $(v'',v''')$, both with capacity~\eqref{eq:node_splitting}. In the new network $v''$
is a storage node with capacity $u_v$, while $v'$ and $v'''$ are non-storage nodes. Every arc $a=(w,v)$ in the original network is replaced
by an arc $(w,v')$ of capacity $u_a$, and every arc $a=(v,w)$ in the original network is replaced
by an arc $(v''',w)$ of capacity $u_a$. A job on node $v$ that blocks only inbound flow can be
represented by a job on the arc $(v',v'')$, a job on node $v$ that blocks only outbound flow can be
represented by a job on the arc $(v'',v''')$, and a job on node $v$ that blocks both inbound and outbound flow can be
represented by two jobs on arcs $(v',v'')$ and $(v'',v''')$ with the additional constraint that
these two jobs have to be processed at the same time. This can be modeled in the CTIP model by asking that $w_{ai} = w_{a'i}$ for all $i$, and in the TDIP model by asking that $z_{ai} = z_{a'i}$ for all $i$, where $a$ and $a'$ are the arcs on which the two jobs must be processed at the same time.

\section{Conclusions and future work}

Our results immediately suggest two directions in which further investigation is warranted. First, TDIP offers enormous flexibility in the choice of discretization, so the relationship between the quality of the bounds produced by TDIP formulations and the granularity of the discretization used needs to be better understood. The nature of the discretization, for example, whether it is regular or irregular, and whether or not it contains the job release dates and due dates, may also impact performance of the formulation: this, too, needs to be better understood. Second, the CoM and Proj heuristics themselves are computationally very cheap, but their quality depends on the time at which either LP or integer feasible solutions are extracted from the TDIP MIP solution process. The relationships between the instance parameters, this time, and the resulting solution quality need to be better understood and quantified. These are both directions of future study.

\section*{Acknowledgements} We thank the Australian Research Council (ARC), the Hunter Valley Coal Chain Coordinator (HVCCC) and Triple Point Technologies for their generous research funding; this research was supported by ARC Linkage grant LP110200524. We also thank the University of Newcastle for funding the PhD scholarship of our co-author, Simranjit Kaur, and thank the University of Delhi, in particular the Sri Guru Tegh Bahadur Khalsa College, for their ongoing support of her PhD studies.

\FloatBarrier

\clearpage

\appendix
\section{Shifting jobs}\label{app:shifting}
In this appendix we assume that there is no storage, i.e., $W=\varnothing$. Consider an optimal schedule $\vect t^*=(t^*_{a})_{a\in A_1}$ with associated time discretization $0=t_0<t_1<\cdots<t_n=T$. Let $F_i$ be the value of a maximum flow in the network available in time interval $[t_{i-1},t_i]$, so that the objective value for the solution $\vect t^*$ is
\[\val(\vect t^*)=\sum_{i=1}^n(t_i-t_{i-1})F_i.\]
For an index set $I\subseteq\{0,1,\ldots,n\}$, let $\mathcal T_I=\{t_i\ :\ i\in I\}$ be the set of corresponding time points. Conversely, for a set $\mathcal T\subseteq\{t_0,\ldots,t_n\}$, let $I_{\mathcal T}=\{i\ :\ t_i\in\mathcal T\}$ be the corresponding index set.
We define the \emph{closure} of $\mathcal T\subseteq\{t_0,\ldots,t_n\}$ to be the set
\[\cl(\mathcal T)=\{t^*_{a},\,t^*_{a}+p_{a}\ :\ a\in A_1\text{ with }t^*_{a}\in\mathcal T\text{ or }t^*_{a}+p_{a}\in\mathcal T\},\]
and the closure of an index set $I$ is $\cl(I)=I_{\cl(\mathcal T_I)}$.
The set $I$ is called \emph{closed} if $\cl(I)=I$. A closed set $I$ is called \emph{free} if
\[\forall a\in A_1\quad t^*_{a}\in\mathcal T_I\implies t^*_{a}\not\in\{r_{a},d_{a}-p_{a}\}.\]
In other words, if $I$ is a free closed index set, then there is an $\varepsilon>0$ such that all jobs whose start and completion times lie in $\mathcal T_I$ can be moved by $\pm\varepsilon$ to obtain two other feasible solutions which we denote by $\vect t^*(I,\varepsilon)$ and $\vect t^*(I,-\varepsilon)$. More precisely,
\[t^*(I,\pm\varepsilon)_j=
\begin{cases}
  t^*_{a}\pm\varepsilon & \text{if }t^*_{a}\in\mathcal T_I, \\
  t^*_{a} & \text{if }t^*_{a}\not\in\mathcal T_I.
\end{cases}
\]

The value $\varepsilon$ can also be chosen sufficiently small, so that the shift affects only the lengths of the time intervals but not the combinations of available arcs. This can be achieved by choosing
\[\varepsilon\leqslant\min\{\min\{t_i-t_{i-1}\ :\ i\in I,\ i-1\not\in I\},\ \min\{t_{i+1}-t_{i}\ :\ i\in I,\ i+1\not\in I\}\}.\]
Let $I^+=\{i\in I\ :\ i+1\not\in I\}$ and $I^-=\{i\in I\ :\ i-1\not\in I\}$. Then the objective values for the solutions $\vect t^*(\varepsilon)$ and $\vect t^*(-\varepsilon)$ are
\begin{align*}
\val(\vect t^*(I,\varepsilon)) &= \val(\vect t^*) -\varepsilon\left(\sum_{i\in I^+}F_{i+1}-\sum_{i\in I^-}F_i\right), \\
\val(\vect t^*(I,-\varepsilon)) &= \val(\vect t^*) +\varepsilon\left(\sum_{i\in I^+}F_{i+1}-\sum_{i\in I^-}F_i\right).
\end{align*}
The optimality of $\vect t^*$ implies $\val(\vect t^*(I,\varepsilon))=\val(\vect t^*(I,-\varepsilon))=\val(\vect t^*)$, hence we have proved the following lemma.
\begin{lemma}\label{lem:shifting}
Suppose $W=\varnothing$, $\vect t^*$ is an optimal solution, and $I$ is a free closed index set. Then there is an $\varepsilon>0$ such that the two solutions $\vect t^*(I,\varepsilon)$ and $\vect t^*(I,-\varepsilon)$ are also optimal solutions.
\end{lemma}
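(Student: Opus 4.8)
The plan is to treat the objective as an explicit function of the shift parameter and exploit optimality to force its linear coefficient to vanish. Recall that for the optimal solution $\vect{t^*}$ we may write $\val(\vect{t^*})=\sum_{i=1}^n(t_i-t_{i-1})F_i$, where $F_i$ is the value of a maximum flow in the (rate-capacitated) network available during interval $[t_{i-1},t_i)$, so $F_i$ depends only on the set of arcs open in that interval and not on the interval length. Since $I$ is free, every job whose start or completion time lies in $\mathcal T_I$ can be nudged by $\pm\varepsilon$ without hitting its release date $r_a$ or latest start time $d_a-p_a$, so both $\vect{t^*}(I,\varepsilon)$ and $\vect{t^*}(I,-\varepsilon)$ are feasible for all sufficiently small $\varepsilon>0$. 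First I would compute $\val(\vect{t^*}(I,\pm\varepsilon))$ as a function of $\varepsilon$ and compare it with $\val(\vect{t^*})$.

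The key step -- and the one I expect to be the main obstacle -- is to argue that for small enough $\varepsilon$ the flow values $F_i$ are unchanged, so that only the interval lengths vary. Here the closedness of $I$ is essential: because $\cl(I)=I$, each shifted job has both its start time and its completion time in $\mathcal T_I$, so the two endpoints move together, the processing time $p_a$ is preserved, and no job is left with a start but not an end inside the shifted block. Choosing $\varepsilon$ below $\min\{t_i-t_{i-1}\ :\ i\in I,\ i-1\notin I\}$ and $\min\{t_{i+1}-t_i\ :\ i\in I,\ i+1\notin I\}$ then guarantees that no time point crosses an unshifted neighbour, so the ordering $t_0\leqslant\cdots\leqslant t_n$, and hence the set of arcs open in each interval, is unaffected. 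Since $W=\varnothing$ there is no storage coupling between periods, so each interval carries an independent static max flow whose value $F_i$ is a function purely of the available arcs; thus every $F_i$ is genuinely invariant under the perturbation.

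Granting this invariance, only the boundary intervals -- those with exactly one endpoint shifted -- change length, and the change is linear in $\varepsilon$. Classifying these via $I^-=\{i\in I:i-1\notin I\}$, whose interval $[t_{i-1},t_i)$ grows by $\varepsilon$, and $I^+=\{i\in I:i+1\notin I\}$, whose following interval $[t_i,t_{i+1})$ shrinks by $\varepsilon$, I would obtain
\[\val(\vect{t^*}(I,\pm\varepsilon))=\val(\vect{t^*})\mp\varepsilon\left(\sum_{i\in I^+}F_{i+1}-\sum_{i\in I^-}F_i\right).\]
Finally I would invoke optimality of $\vect{t^*}$: since neither shifted solution can exceed $\val(\vect{t^*})$, the quantity $\sum_{i\in I^+}F_{i+1}-\sum_{i\in I^-}F_i$ must be simultaneously $\geqslant 0$ (from the $+\varepsilon$ direction) and $\leqslant 0$ (from the $-\varepsilon$ direction), hence zero. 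Therefore $\val(\vect{t^*}(I,\varepsilon))=\val(\vect{t^*}(I,-\varepsilon))=\val(\vect{t^*})$, so both perturbed solutions are optimal, which is exactly the claim.
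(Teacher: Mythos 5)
Your proof is correct and follows essentially the same route as the paper's: shift the free closed block by $\pm\varepsilon$, observe that for small enough $\varepsilon$ only the lengths of the boundary intervals (indexed via $I^-$ and $I^+$) change while the per-interval max-flow values $F_i$ are preserved, and then use optimality in both directions to force the linear coefficient $\sum_{i\in I^+}F_{i+1}-\sum_{i\in I^-}F_i$ to vanish. Your added justification for why the $F_i$ are invariant (closedness keeps each job's start and end moving together, and the absence of storage decouples the intervals) is a point the paper states without elaboration, but the argument is the same.
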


\end{document}